\documentclass{amsart}

\usepackage[letterpaper,margin=1.25in]{geometry}
\usepackage{amssymb}
\usepackage{mathtools}
\usepackage{enumitem}
\usepackage{booktabs}
\usepackage{xcolor}
\usepackage{hyperref}

\hypersetup{
  hidelinks,
  colorlinks=false,
  linkcolor=black,
  citecolor=black,
  urlcolor=black
}

\newcommand{\C}{\mathbb{C}}
\newcommand{\Z}{\mathbb{Z}}

\newcommand{\Ocal}{\mathcal{O}}
\newcommand{\Ecal}{\mathcal{E}}

\newcommand{\eps}{\epsilon}
\newcommand{\id}{\mathrm{id}}
\DeclareMathOperator{\Ram}{Ram}
\DeclareMathOperator{\Res}{Res}
\DeclareMathOperator{\Log}{Log}

\theoremstyle{plain}
\newtheorem{thm}{Theorem}[section]
\newtheorem{lem}[thm]{Lemma}
\newtheorem{prop}[thm]{Proposition}
\newtheorem{cor}[thm]{Corollary}

\theoremstyle{definition}

\theoremstyle{remark}
\newtheorem{rem}[thm]{Remark}

\numberwithin{equation}{section}

\begin{document}

\title{Conifold Gap Theorem for Topological Recursion}

\author{Bohan Fang}
\address{Bohan Fang, Beijing International Center for Mathematical Research, Peking University, 5 Yiheyuan Road, Beijing 100871, China}
\email{bohanfang@gmail.com}

\author{Juping Chen}
\address{Juping Chen, Department of Mathematical Sciences, Tsinghua University, Haidian District, Beijing 100084, China}
\email{cjp24@mails.tsinghua.edu.cn}

\author{Linji Chen}
\address{Linji Chen, Beijing International Center for Mathematical Research, Peking University, 5 Yiheyuan Road, Beijing 100871, China}
\email{aco@stu.pku.edu.cn}

\author{Zhengyu Zong}
\address{Zhengyu Zong, Department of Mathematical Sciences, Tsinghua University, Haidian District, Beijing 100084, China}
\email{zyzong@mail.tsinghua.edu.cn}

\subjclass[2020]{14J33, 14H81, 14N35, 32G20}
\keywords{topological recursion, toric mirror curve, conifold gap, plumbing degeneration, free energy}
\date{}

\begin{abstract}
We prove a conifold gap theorem for the topological recursion of toric mirror curves: for every local analytic family acquiring a generic one-node degeneration and every fixed genus at least two, the conifold-polarized free energy has one universal polar term, and the remainder is jointly holomorphic in the transverse and spectator parameters. The result covers separating and nonseparating nodes and allows general family deformations beyond the pure filling-fraction case.
\end{abstract}

\maketitle

\setcounter{tocdepth}{1}
\tableofcontents

\section{Introduction}\label{sec:introduction}

\subsection{Background}

Topological string theory assigns to a Calabi--Yau threefold a free energy at every genus. Near a conifold point of the moduli space, where a cycle collapses and the threefold acquires an ordinary double point, physics predicts one universal local model for all of these free energies at once: a single BPS state, realized by a brane wrapping the vanishing cycle, becomes massless there \cite{Str95}, and its contribution is that of the $c=1$ string at the self-dual radius \cite{GhV95,GoV98}.

Huang and Klemm \cite{HK07} discovered, and Huang, Klemm, and Quackenbush \cite{HKQ09} developed systematically, a sharper prediction, the \emph{conifold gap condition}: in the suitably normalized flat coordinate $t$ given by the period of the vanishing cycle, the genus-$g$ free energy satisfies
\[
F_g=\frac{B_{2g}}{2g(2g-2)}\,t^{2-2g}+O(1),
\qquad g\geq2,
\]
with every intermediate negative power and every logarithm absent. The leading coefficient is classical: by Harer and Zagier \cite{HZ86}, $B_{2g}/(2g(2g-2))$ is the orbifold Euler characteristic of the moduli space of genus-$g$ curves. The gap supplies $2g-2$ boundary conditions at each conifold point of the moduli space and, as one of several boundary conditions, contributes to fixing the holomorphic ambiguity of the holomorphic anomaly equations; within this physical framework, the available boundary data determine the quintic B-model amplitudes through genus $51$ \cite{HKQ09}. As a theorem, the gap has remained rare: for the resolved conifold it is an elementary consequence of the $(-1,-1)$-curve multiple-cover formula \cite[Theorem~3]{FP00} and is made explicit by the analytic-continuation formulas of Pasquetti and Schiappa \cite[eqs.~(4.17)--(4.18)]{PS10} and Alim \cite[Theorem~4.6]{Ali25}; it was proved recently for local $\mathbb P^2$ by Brini \cite{Bri25}. On the side of Chekhov--Eynard--Orantin topological recursion \cite{CEO06,EO07}, the leading double-scaling behavior of its invariants in rational singular limits is classical; Iwaki and Kidwai \cite{IK22} proved exact all-genus formulas for the nine genus-zero spectral curves of hypergeometric type, with the higher-genus free energies $F_g$, $g\geq2$, expressed as Bernoulli-weighted sums over BPS states, building on the quantum-curve computations of Iwaki, Koike, and Takei \cite{IKoT}; and Iorgov, Iwaki, Lisovyy, and Zhuravlov \cite{IILZ25} proved the gap for the rescaled Painlev\'e I elliptic family by degeneration to the Weber curve.

\subsection{Main result}

Roughly speaking, our result is the gap condition for the topological recursion of an arbitrary toric mirror curve: for every local analytic family of toric mirror curves acquiring a generic one-node degeneration, and for every fixed genus at least two, the conifold-polarized free energy has exactly one universal polar term, and the remainder is jointly holomorphic in all parameters. Three features make this class demanding. The defining equation of a general toric mirror curve is not explicit enough for direct computation near the conifold point; the compactified curve may acquire either a separating or a nonseparating node; and, writing the conifold divisor locally as $\{q=0\}$, the deformation in the transverse $q$-direction is allowed to be general, so the theorem covers deformations beyond the pure filling-fraction case.

Because the initial form of the recursion is logarithmic, the framework that applies a priori is logarithmic topological recursion; our framing hypothesis removes its vital punctures, in the sense of \cite[Definitions~2.4 and~2.5 and Remark~2.6]{ABDKS23} and \cite[Definition~2.3 and Remark~2.3]{HMO26}, so ordinary Eynard--Orantin recursion \cite{EO07} applies throughout. We now fix the notation needed to state the result.

Let $\mathfrak X$ be a smooth toric Calabi--Yau threefold, and consider a local analytic family
\[
H(X,Y;q,s)=0
\]
of its reduced compactified mirror curves with fixed two-dimensional Newton polygon $\Delta$. Here $\{q=0\}$ is the smooth conifold divisor, the $q$-direction is transverse to it, and $s$ denotes the spectator moduli (the parameters along the divisor). We assume that the family has a generic one-node degeneration at $(q,s)=(0,s_0)$: informally, the central compactified curve has a single ordinary node in the dense torus, the $q$-direction smooths it transversely, and no other singularity or toric-boundary degeneration interferes. The precise hypotheses are stated in Proposition~\ref{thm:generic-framing} and restated in Theorem~\ref{thm:main-gap}.

Choose a good integer framing $f$ in the sense of Proposition~\ref{thm:generic-framing}; only finitely many integer framings are excluded. Put
\[
x_f=\log X+f\log Y,\qquad
y=\log Y,\qquad
\omega_{0,1}=y\,dx_f.
\]
With the conifold-adapted Torelli marking fixed before Proposition~\ref{prop:regime-atlas}, let $\gamma$ be an oriented vanishing cycle, and let $\omega_{g,1;t,s}^c$ denote the conifold-polarized correlators. We define
\[
t=\frac{1}{2\pi i}\int_\gamma\omega_{0,1},
\qquad
F_g^c(s,t)=
\frac{1}{2-2g}
\sum_{a\in\Ram(x_f)}
\operatorname*{Res}_{p=a}
\Phi_a(p)\,\omega_{g,1;t,s}^c(p),
\qquad
d\Phi_a=\omega_{0,1;t,s}.
\]
We use the Bernoulli-number convention
\[
\frac{x}{e^x-1}=\sum_{m\geq0}B_m\frac{x^m}{m!}.
\]
With this notation, our main result is the following.

\begin{thm}[{Conifold gap; $=$ Theorem~\textup{\ref{thm:main-gap}} $+$ Corollary~\textup{\ref{cor:main-calibrated}}}]\label{thm:main-gap-intro}
In the setting above, $(t,s)$ is a holomorphic coordinate system near $(0,s_0)$ in which the discriminant is $\{t=0\}$. For every fixed integer $g\geq2$, after shrinking to a product $U_{\rm sp}\times D_t$, where $U_{\rm sp}$ may be a point, there is a unique jointly holomorphic function $H_g^{\rm gap}\in\Ocal(U_{\rm sp}\times D_t)$ such that, for $t\neq0$,
\begin{equation}\label{eq:intro-gap}
F_g^c(s,t)=\frac{B_{2g}}{2g(2g-2)}\,t^{2-2g}+H_g^{\rm gap}(s,t).
\end{equation}
In particular, the Laurent expansion of $F_g^c$ in $t$ has no logarithmic term and no negative power other than $t^{2-2g}$. The shrinking and the holomorphic extension are uniform for $s$ in compact subsets of the smooth conifold divisor, and the conclusion covers both nonseparating and separating nodes.
\end{thm}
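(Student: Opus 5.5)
The plan is a plumbing degeneration argument. The node is the output of a family of collars $\{zw=\varepsilon\}$ glued between two pieces: two components when the node separates, or one nodal component when it does not. The $\omega_{g,n}$ then split into a universal local contribution coming from the collar and a remainder controlled by the normalization of the central curve.

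\textbf{Step 1: coordinates.} I first show that $(t,s)$ is a holomorphic coordinate system. Near an ordinary node, the vanishing period satisfies $t=\frac{1}{2\pi i}\int_\gamma y\,dx_f = c(s)\,\varepsilon+O(\varepsilon^2)$, with $c(s_0)\neq0$. The generic one-node hypothesis of Proposition~\ref{thm:generic-framing} gives that $\varepsilon$ is a holomorphic function of $(q,s)$ with $\partial_q\varepsilon\neq0$. Hence $t$ is a holomorphic local coordinate and the discriminant is $\{t=0\}$. The conifold-adapted Torelli marking fixed before Proposition~\ref{prop:regime-atlas} places $\gamma$ among the $A$-cycles. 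With that marking, the conifold-polarized Bergman kernel $\omega_{0,2;t,s}^c$ extends holomorphically across $t=0$ to the Bergman kernel of the normalization, in the nonseparating case with the pinched cycle normalized. This is the step where the choice of polarization matters, since no $\log t$ appears in $\omega_{0,2}^c$.

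\textbf{Step 2: localization.} The good framing $f$ guarantees that no ramification point of $x_f$ collides with the node. So, uniformly for $s$ in compact sets, the ramification points split into two groups.
\begin{itemize}
\item The first group, $\Ram_{\rm far}$, stays away from the collar. Those points vary holomorphically, and the recursion kernels there are jointly holomorphic in $(s,t)$.
\item The second group, $\Ram_{\rm col}$, is the pair of simple ramification points of $x_f$ that migrate into the collar as $t\to0$.
\end{itemize}
In a rescaled collar coordinate $u$, with $z=\sqrt{t}\,u$ roughly, the local spectral curve converges to the conifold/Weber-type model curve $x=\log u - \log(1-u)$ type. More precisely, it converges to the local model $xy$-curve of the $c=1$ string at self-dual radius. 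Its Eynard--Orantin free energies are known from \cite{IK22,IILZ25}: they equal $\frac{B_{2g}}{2g(2g-2)}t^{2-2g}$ exactly.

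I would then expand each $\omega_{g,n}^c$ via the graph expansion of topological recursion, with vertices labelled by the two ramification groups.
\begin{itemize}
\item Graphs with all vertices in $\Ram_{\rm col}$ reproduce the model recursion, up to corrections.
\item Every other graph contains a far vertex joined by an edge to a collar vertex, and I would show it is regular.
\end{itemize}

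\textbf{Step 3: the main obstacle.} The crux is controlling the mixed terms and the corrections to the model. I would attack this with weighted homogeneity. Under the scaling $u\mapsto z/\sqrt t$, the collar contributions to $\omega_{g,n}$ carry explicit powers $t^{(2-2g-n)/2}$ times holomorphic series. The substitution $y\,dx_f$ has a holomorphic expansion in $(z,t,s)$. Each correction term then has weight strictly larger than the pure model term. The Bergman kernel restricted to the collar is the model kernel plus holomorphic terms in $z_1,z_2$, by Step 1.

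Given this, I would argue by induction on $2g-2+n$ that $\omega_{g,n}^c$ equals the model part plus a part whose Laurent coefficients in $t$ have degree $\geq$ the weight bound. The bound would be strong enough that, after taking residues against $\Phi_a$ in the dilaton formula, all polar terms other than the model's cancel. The residues at collar points of holomorphic-in-$z$ data vanish. The principal parts combine into a single meromorphic function of $t$ that is single valued, because $\omega^c$ has no monodromy in $t$ after polarization. That rules out logarithms, and Riemann extension then gives holomorphy once the model term is subtracted.

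The delicate part is that the dilaton equation requires choosing primitives $\Phi_a$. I would verify that the dependence on these choices, together with the separating-versus-nonseparating distinction, contributes only holomorphic terms. Uniformity in $s$ and uniqueness of $H_g^{\rm gap}$ follow from compactness and the identity theorem.
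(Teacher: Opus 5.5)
\textbf{The gap is in Step~3, and Step~3 carries the whole content of the theorem.}

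\emph{What the weight count actually gives.} The weighted-homogeneity count you describe shows at best that every correction to the collar model has weight strictly larger than $2-2g$, i.e.\ that $t^{2g-2}F_g^c\to C_g$. That is the leading double-scaling limit, which is the paper's Theorem~\ref{thm:gaussian-limit}, not the gap. Corrections of relative order $t^{1/2},t,\dots$ come from three sources: the Taylor coefficients of $y\,dx_f$ along the neck, the holomorphic part of $B_t$ in the collar, and hot--cold interactions. After descent to the $t$-disc they produce terms $t^{3-2g},\dots,t^{-1}$. These are polar for $g\geq2$, and nothing in a weight count forces their coefficients to vanish.

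\emph{Your supporting claims do not hold.} In the rescaled coordinate these corrections are rational with poles at the two cold points, so ``residues of holomorphic-in-$z$ data vanish'' does not apply. Individual cold residues are not even bounded. Already for $\omega_{1,1}$ at an exterior point, each one is of order $t^{-1/2}$, and only their sum is bounded; the paper obtains that bound by moving the contour to the plumbing circles.

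\emph{A quick test shows the scheme cannot work as stated.} It never uses that $t$ is the \emph{exact} vanishing period. Every step therefore applies verbatim to $\tilde t=t(1+at)$ with $a\neq0$, where the conclusion is false, since $C_gt^{2-2g}=C_g\tilde t^{2-2g}+(2g-2)aC_g\tilde t^{3-2g}+\cdots$. Finally, single-valuedness excludes only logarithms and half-integer powers. Your closing ``Riemann extension'' presupposes that $F_g^c-C_gt^{2-2g}$ is bounded, which is exactly what has to be proved.

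\textbf{The missing idea} is a pair of exact identities in which the exact period enters with matching coefficients, so that the singular neck contributions cancel without ever being expanded.
\begin{itemize}
\item Special geometry for the fixed-$x_f$ deformation gives $\partial_tF_g^c=I_g^{\rm neck}+R_g$ with $R_g=O(1)$. Here $I_g^{\rm neck}=(2\pi i c_{\rm per})^{-1}\int_{L_{\rm neck}}\omega_{g,1}^c$, the coefficient coming from $\int_\gamma\Omega_t=1/c_{\rm per}$.
\item Apply the residue theorem to $\Phi_{\rm cut}\,\omega_{g,1}^c$ on the cut plumbing annulus; across the cut the primitive jumps by exactly $P=t/c_{\rm per}$. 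Together with uniform bounds for the correlators on hot compact sets and on the plumbing circles, this gives $(2-2g)F_g^c=tI_g^{\rm neck}+O(1)$.
\end{itemize}
Hence the Euler defect $\mathcal E_g=(2-2g)F_g^c-t\partial_tF_g^c$ is bounded and extends holomorphically across $t=0$. On the punctured $t$-disc its Laurent coefficients are $e_k=(2-2g-k)a_k$, which forces $a_k=0$ for every negative $k\neq2-2g$. Only then does the Gaussian leading limit identify $a_{2-2g}$.

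\textbf{Two smaller points.} First, the local model for a good framing is the Gaussian (Weber) curve $w^2=z^2-1$, not a curve of the form $x=\log u-\log(1-u)$. Second, its free energy is $\frac{B_{2g}}{2g(2g-2)}(2c_{\rm per}\kappa)^{2g-2}t^{2-2g}$ with $\kappa=-\pi i$. The stated constant therefore requires $c_{\rm per}=1/(2\pi i)$ and a check of the recursion sign conventions against the source you cite.
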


For semi-projective toric Calabi--Yau threefolds, the remodeling conjecture of Bouchard, Klemm, Mari\~no, and Pasquetti \cite{BKMP09,BKMP10,Marino08} identifies, after the mirror map and up to the standard convention-dependent signs, the Gromov--Witten potentials with the Eynard--Orantin invariants of the mirror curve. Eynard and Orantin \cite{EO15} provided a proof in the smooth case, while Fang, Liu, and Zong \cite{FLZ20} proved the general orbifold case. Theorem~\ref{thm:main-gap-intro} therefore addresses the conifold gap expectation on the mirror side for the whole toric class; the transfer to Gromov--Witten generating functions along the mirror map is deferred to future work. Compared with \cite{Bri25}, which proves the gap for local $\mathbb P^2$ by passing through a statistical-mechanical model, our proof stays within the recursion. Within the recursion, the results closest to ours are those of Iwaki and Kidwai \cite{IK22}, whose exact BPS sums exhibit the conifold-type behavior of the nine genus-zero hypergeometric-type curves at a vanishing central charge, and of Iorgov, Iwaki, Lisovyy, and Zhuravlov \cite{IILZ25}, who prove the gap for the rescaled Painlev\'e I elliptic family through series expansions on the degenerate curve, with contours chosen to avoid the colliding ramification points, together with a reduction to the Weber model. Without relying on closed formulae, our theorem treats logarithmic spectral data on toric mirror curves of arbitrary genus by analytic estimates uniform in the moving family. It allows general transverse one-node deformations with spectator moduli and covers both separating and nonseparating nodes.

\subsection{Method}

The proof has three structural inputs. First, the local node is put in a normal form compatible with the framed projection. The vanishing period itself, rather than the leading coefficient of an auxiliary square-root parameter, is used as the transverse coordinate. This gives a single coordinate system for the compact subsets away from the node, the plumbing circles, and the fixed scaled-neck charts.

Second, the normalized bidifferential is estimated in the moving family. The exterior complex structure is allowed to vary transversely. Capping records this variation in one parameter, while an independent parameter performs the plumbing. On scaled-neck charts, a simultaneous two-seam construction retains the Gaussian double pole and eliminates the annular zero mode. A nested-collar induction then estimates the residues at the two colliding ramification points (we call them \emph{cold}, and everything staying away from the node \emph{hot}) collectively rather than term by term.

Third, special geometry is applied to the family deformation itself, with the hot residual bounded:
\[
\partial_tF_g^c=I_g^{\rm neck}+R_g,\qquad R_g=O(1).
\]
The cold part of the free-energy residue sum gives
\[
(2-2g)F_g^c=tI_g^{\rm neck}+O(1).
\]
The neck terms cancel exactly, and what remains is the \emph{Euler defect}
\[
\mathcal E_g=(2-2g)F_g^c-t\partial_tF_g^c=O(1),
\]
the failure of $F_g^c$ to be homogeneous of degree $2-2g$ in the vanishing period. Single-valuedness on the full punctured $t$-disc and the Gaussian leading limit then force the gap by a Laurent-coefficient argument: boundedness of the defect removes every negative Laurent coefficient except the one at exponent $2-2g$.

We fix the toric one-node families, the good framings, and the recursion conventions in Section~\ref{sec:setup}, where the node is also put in a projection-compatible normal form and the vanishing period $t$, the plumbing data, and the Gaussian comparison curve are constructed. In Section~\ref{sec:estimates} we control the bidifferential in the moving family, bound the stable correlators uniformly, and establish the exact-period Gaussian leading limit. In Section~\ref{sec:defect} we prove special geometry for the family deformation with the hot residual bounded, descend the conifold-polarized correlators and free energies to the punctured $t$-disc, and bound the Euler defect. We assemble the gap and evaluate the universal constant against the Gaussian matrix model in Section~\ref{sec:assembly}.

\subsection{Statement on the usage of artificial intelligence}

The mathematics and the writing of this paper were assisted by a modified Danus--Rethlas system \cite{LGS26,JGJ26}, and later by custom agents built on Claude and Codex. The authors steered the system, both mathematically and operationally, and verified all generated content.

\subsection*{Acknowledgements}

The first author would like to thank Chiu-Chu Melissa Liu and Song Yu for valuable mathematical discussions on Gromov--Witten theory and topological recursion. He is grateful to Bin Dong, Jihao Liu, and the AI4Math project at Peking University for guidance on AI--math interaction. The first author also thanks Yin Wu for discussions on AI usage and agent harnesses. The fourth author would like to thank Chiu-Chu Melissa Liu and Song Yu for useful discussions.

The work of the first author is partially supported by the National Key R\&D Program of China 2023YFA1009803, NSFC 12125101, and NSFC 11890661. The work of the fourth author is partially supported by NSFC grant No. 11701315 and the Natural Science Foundation of Beijing, China grant No. 1252008.

\section{Setup and the one-neck normal form}\label{sec:setup}

This section fixes the toric one-node families with their good framings and recursion conventions, and puts the node in a projection-compatible normal form carrying the exact period and the plumbing data.

\subsection{Toric families and generic framing}

Let $\Delta\subset\mathbb R^2$ be a fixed two-dimensional lattice polygon. The curves in this
paper are the mirror curves of a smooth toric Calabi--Yau threefold $\mathfrak X$; all we use
from this provenance is the resulting class of equations, as follows. A curve of the family is
cut out of the algebraic torus $(\C^*)^2$ by a Laurent polynomial with Newton polygon $\Delta$,
\[
H(X,Y;q,s)
=
\sum_{(a,b)\in\Delta\cap\Z^2}
c_{a,b}(q,s)\,X^aY^b,
\]
whose coefficients $c_{a,b}$ are holomorphic in local coordinates $(q,s)$ on the parameter
space. The smooth conifold divisor is locally $\{q=0\}$; the $q$-direction is transverse to it,
while $s$ collects the remaining coordinates along the divisor. These play a passive role
throughout, and we call them the \emph{spectator moduli}.
Let $S$ be the toric surface determined by $\Delta$. The closure $C_{q,s}\subset S$ of the
affine curve is the \emph{compactified mirror curve}. It meets the toric boundary of $S$ in
finitely many points, the \emph{punctures}, each lying on the boundary divisor $D_e$ attached
to an edge $e$ of $\Delta$. The coefficients $c_{a,b}$ at the vertices of $\Delta$ are the
\emph{vertex coefficients}; since they are nonzero, the punctures avoid the torus-fixed points
of $S$, and each puncture lies on the smooth locus of one $D_e$. Every fiber is
\emph{reduced}: the defining Laurent polynomial has no repeated irreducible factor, so no
component of $C_{q,s}$ occurs with multiplicity. Throughout, \emph{locally uniformly in $s$}
means: uniformly for $s$ in each compact subset of the spectator neighborhood under
consideration.

On a smooth fiber, an integer \emph{framing} $f$ determines the logarithmic spectral data
\[
x_f=\log X+f\log Y,\qquad y=\log Y,\qquad
\omega_{0,1}=y\,dx_f.
\]
Because $y=\log Y$, the initial form $\omega_{0,1}$ is logarithmic, and the recursion framework
that applies a priori is logarithmic topological recursion \cite[Definitions~2.4 and~2.5 and
Remark~2.6]{ABDKS23}, \cite[Definition~2.3 and Remark~2.3]{HMO26}. We use it only through one
criterion: a \emph{vital} puncture is a simple pole of $dy$ at which $dx_f$ is regular, and if
there is no vital puncture, logarithmic and ordinary topological recursion coincide. Our choice
of framing forces $dx_f$ to have a pole with nonzero residue at every puncture, so no vital
puncture exists and ordinary Eynard--Orantin recursion applies throughout the paper.

Two finite families of framings must be avoided. First, $dx_f=d\log X+f\,d\log Y$ could have
zero residue at some puncture; each edge of $\Delta$ excludes at most one integer, and these
framings form the finite set $B_{\rm res}$. Second, the framed projection could degenerate on
the central fiber: a limiting ramification point could fail to be simple, or a nodal tangent
could become vertical for the projection; these framings form the finite set $B_{\rm proj}$.
We call an integer framing \emph{good} if it lies outside $B_{\rm res}\cup B_{\rm proj}$, and
we fix a good framing from now on. The next proposition records what this choice guarantees.
Near the nodal degeneration the ramification points split into two classes: the two \emph{cold}
ramification points converge to the node, while every other ramification point, and every
puncture, is \emph{hot} and stays a positive distance away.

\begin{prop}[Generic logarithmic framing and cold--hot separation]\label{thm:generic-framing}
Consider a local analytic family of reduced toric curves $C_{q,s}$ with fixed two-dimensional Newton polygon $\Delta$ as above, where $s$ ranges in a complex-manifold neighborhood of $s_0$ and $q$ is transverse to the smooth \emph{conifold divisor} $q=0$, the discriminant locus along which the fiber acquires its node. Assume that $C_{0,s_0}$ has exactly one ordinary node at a point of the dense torus, that the $q$-direction smooths it transversely, that the central compactified curve is otherwise smooth, that the vertex coefficients are nonzero, and that the puncture sections are pairwise disjoint. Equip the smooth fibers with a \emph{conifold-adapted Torelli marking} and the corresponding $A$-normalized fundamental bidifferential; the conifold-adapted choice is specified in Proposition~\ref{prop:regime-atlas}.

There are two finite sets of framings,
\[
B_{\rm res}\subset\Z,\qquad B_{\rm proj}\subset\Z,
\]
the first depending only on the edge normals of $\Delta$, the second determined by the two nodal tangent branches and by the critical values of the meromorphic ratio $-d\log X/d\log Y$ on the central normalization; both are constructed in the proof. For every good framing $f\notin B_{\rm res}\cup B_{\rm proj}$, fix
\[
x_f=\log X+f\log Y,\qquad y=\log Y.
\]
Then $dx_f$ has a pole with nonzero residue at every puncture and is nonzero on both normalized tangent branches at the node. Moreover there exist a neighborhood $U_f$ of $s_0$, a number $\delta_f>0$, a fixed \emph{node tube} $N_f$ whose closure is contained in the torus, and a fixed hot region $H_f$ in the compactified family, disjoint from a smaller node tube, such that for $s\in U_f$ and $0<|q|<\delta_f$:

\begin{enumerate}[label=\textup{(\arabic*)}]
\item the logarithmic spectral data, together with the conifold-adapted marking and normalized bidifferential, satisfy the admissibility hypotheses of the vital-point criterion recalled above: $dx_f$ and $dy$ are meromorphic, every ramification point is simple, and $dy$ is regular and nonzero at each one; since $dx_f$ has a pole at every puncture, there is no vital puncture, and ordinary topological recursion and logarithmic topological recursion coincide;
\item exactly two ramification points lie in $N_f$, they are simple, and they are the only ramification points converging to the node; on the double cover on which the transverse nodal coordinate is $\lambda=\eps^2$, projection-compatible coordinates (Proposition~\ref{thm:normal-form}) put the curve in the form
\[
v^2=u^2-\lambda,
\]
and the two cold points are exactly $u=\eps$ and $u=-\eps$;
\item every other ramification point and every puncture lies in $H_f$, so none enters the neck.
\end{enumerate}

The finite-set assertions concern all but finitely many integer $f$ at the fixed base point $s_0$. After one such $f$ is chosen, the neighborhood, parameter radius, and hot and cold regions may depend on $f$.

All conclusions are locally uniform in $s$. For every compact $K\subset U_f$, the radius may be chosen uniformly for $s\in K$. In fixed local coordinates and a smooth family of Hermitian metrics, the nonzero puncture residues, the nodal branch derivatives of $x_f$, the derivatives witnessing simplicity of the hot zeros, the normal-form coefficients $\lambda_q$ and $\chi_u$, and the cold--hot separation are bounded away from zero, while the relevant holomorphic coordinate coefficients are bounded above.
\end{prop}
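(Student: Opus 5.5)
The plan is to build the two exceptional sets explicitly from the central fiber, then spread every nondegeneracy condition to nearby fibers by compactness and the implicit function theorem.

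\emph{Step 1: the set $B_{\rm res}$.} At a puncture on the boundary divisor $D_e$, let $\nu_e=(\nu_1,\nu_2)$ be the primitive inward edge normal and $m\ge1$ the local contact order. Then $d\log X$ and $d\log Y$ have simple poles with residues $-m\nu_1$ and $-m\nu_2$, up to a common sign. Hence the residue of $dx_f$ there is $-m(\nu_1+f\nu_2)$. This vanishes for at most one integer $f$, namely $f=-\nu_1/\nu_2$ when that is an integer, and for none if $\nu_2=0$. Collecting these values over the edges gives $B_{\rm res}$. Because the puncture sections are pairwise disjoint and avoid the torus-fixed points, $m$ and $e$ are locally constant. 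So the nonzero residues persist on all nearby fibers, with a uniform lower bound in $|f|$-independent terms. This also rules out vital punctures, since $dx_f$ has a pole at every puncture.

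\emph{Step 2: the set $B_{\rm proj}$.} On the normalization $\tilde C$ of $C_{0,s_0}$, set $r=-d\log X/d\log Y$, a nonconstant meromorphic function. We have $dx_f=d\log Y\,(f-r)$.

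For $f$ off a finite set, the following hold:
\begin{enumerate}[label=(\alph*)]
\item $f\neq r(n_\pm)$ at the two nodal preimages, so $dx_f\neq0$ on both tangent branches;
\item $f$ is not a critical value of $r$, so the zeros of $f-r$ are simple;
\item $f$ differs from the values of $r$ at the finitely many zeros and poles of $d\log Y$ in the torus.
\end{enumerate}
By (b) and (c), the zeros of $dx_f$ in the torus are simple zeros of $f-r$ at which $d\log Y$ is regular and nonzero. This gives simple ramification with $dy$ regular and nonzero there. Poles of $d\log Y$ occur only at punctures, where Step 1 applies. Define $B_{\rm proj}$ as the union of these excluded values.

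If the node is nonseparating and the normalization is connected, this is immediate. If it is separating, apply the same argument to each component.

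\emph{Step 3: the cold points and the normal form.} Near the node, (a) lets us use $x_f$ as a local coordinate $u$ along both branches after a holomorphic change of variables. The local equation then becomes $v^2=u^2-\lambda(q,s)$ with $\partial_q\lambda\neq0$, which is the transversality hypothesis. This is the content of Proposition~\ref{thm:normal-form}, which I would invoke as the statement of the projection-compatible coordinates. On the double cover $\lambda=\eps^2$, the zeros of $dx_f=du$ restricted to the curve are exactly the branch points $v=0$, $u=\pm\eps$. These are simple, and $dy$ is nonzero there because $y$ restricts nondegenerately.

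Choose $N_f$ as a fixed bidisc in these coordinates with closure in the torus. Choose $H_f$ as the complement of a smaller node tube. On $H_f$ the family is a smooth proper family of punctured curves, so by (b), (c) and the implicit function theorem the hot zeros deform holomorphically and remain simple. Uniform lower bounds on $|q|<\delta_f$, $s\in K$ follow by compactness. A zero-counting argument (the argument principle on $N_f$ minus the smaller tube) shows that no ramification points other than the two cold ones enter $N_f$.

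\emph{Main obstacle.} The delicate step is Step 3. It requires uniformity over the degenerating family: ensuring that the normal-form coefficients $\lambda_q,\chi_u$ and the cold--hot separation stay bounded as $q\to0$ across $s\in K$. It also requires checking that condition (c) covers every way $dy$ could vanish at a ramification point. I would handle this by working on the total space of the family, which is smooth near the node by transversality, and applying the implicit function theorem there rather than fiberwise.
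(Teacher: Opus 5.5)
Your overall plan is the paper's: $B_{\rm res}$ from the edge normals, $B_{\rm proj}$ from $r=-d\log X/d\log Y$ on the central normalization, then the normal form at the node and implicit-function/compactness persistence of the hot zeros. But Step~2 has a genuine hole. Write $\alpha,\beta$ for the pullbacks of $d\log X,d\log Y$. Your inference ``by (b) and (c), the zeros of $dx_f$ in the torus are simple zeros of $f-r$ at which $d\log Y$ is regular and nonzero'' does not follow.
\begin{enumerate}[label=\textup{(\roman*)}]
\item At a zero of $\beta$ with $\alpha\neq0$, $r$ has a pole, so (c) excludes nothing there.
\item At a common zero of $\alpha$ and $\beta$, both $dx_f=\alpha+f\beta$ and $dy=\beta$ vanish for \emph{every} $f$, so no exclusion of framings can repair it.
\end{enumerate}
The missing input is the ambient coframe argument. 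The form $dx_f\wedge dy=d\log X\wedge d\log Y$ is nowhere zero on $(\C^*)^2$, and the normalization is immersive over the torus, including at the two node preimages. Hence $\alpha$ and $\beta$ have no common zero there. This gives $\beta\neq0$ at every zero of $\alpha+f\beta$, so $dx_f=\beta(f-r)$ locally and (b) yields simplicity. Moreover, every ramification point lies in the torus by Step~1. Applying the same fact on each nearby smooth fiber therefore gives $dy(T)\neq0$ whenever $dx_f(T)=0$, which is the $dy$ part of assertion (1). Condition (c) is then superfluous. This is exactly the ``main obstacle'' you flagged, and without it your proof of (1) is incomplete.

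Second, your assertion that $r$ is a nonconstant meromorphic function is unjustified and can fail. In the separating case the central curve is reducible, and its components may be binomial curves. For example, the family $1+X+Y+QXY=0$ at $Q=1$ factors as $(1+X)(1+Y)=0$. On the component $\{X=-1\}$ one has $\alpha\equiv0$, so $r\equiv0$; on $\{Y=-1\}$ one has $\beta\equiv0$, so $r$ is undefined. You need the componentwise case split the paper uses:
\begin{enumerate}[label=\textup{(\roman*)}]
\item if $r\equiv c$, exclude $f=c$; then $dx_f=(f-c)\beta$ is zero-free on the torus part by the coframe argument;
\item if $\beta\equiv0$, then $dx_f=\alpha$ is zero-free and nothing is excluded.
\end{enumerate}
With these two repairs, the remaining steps go through as in the paper: the nonzero puncture residues persist, so no zeros enter the puncture tubes; the normal form gives exactly two cold zeros; and the simple hot zeros continue under the implicit function theorem.
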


\begin{proof}
Consider a puncture $p$. Because the vertex coefficients are nonzero, $p$
lies over the smooth locus of a boundary divisor $D_e$ of the toric surface,
away from the torus-fixed points. Let $(m_p,n_p)$ be the primitive edge
normal, normalized so that $X$ and $Y$ have orders $m_p$ and $n_p$ along
$D_e$. In a local chart with $D_e=\{z=0\}$, write $X=z^{m_p}U_X$ and
$Y=z^{n_p}U_Y$ with holomorphic units $U_X,U_Y$; on the normalization,
$z=w^{k_p}U$ with a holomorphic unit $U$, where $k_p\geq1$ is the contact
multiplicity of the branch with $D_e$. Hence
\[
\Res_p(d\log X)=k_pm_p,
\qquad
\Res_p(d\log Y)=k_pn_p,
\]
and the residue of
\[
dx_f=d\log X+f\,d\log Y
\]
at $p$ equals $k_p(m_p+fn_p)$. If $n_p\neq0$, this residue vanishes only for
$f=-m_p/n_p$, so each edge excludes at most one integer. If $n_p=0$, then
$dy$ is regular at $p$, while primitivity forces $m_p\neq0$, so the residue
of $dx_f$ is nonzero for every $f$. Let $B_{\rm res}$ be the set of integers
$f$ with $m_p+fn_p=0$ and $n_p\neq0$ at some puncture. Since $(m_p,n_p)$ is
the normal of the edge under $p$ and $\Delta$ has finitely many edges, this
set is finite and depends only on the edge normals of $\Delta$; it does not
change along the family, because the edge normals are fixed and the positive
factor $k_p$ multiplies both residues. Thus, for $f\notin B_{\rm res}$,
$dx_f$ has a pole with nonzero residue at every puncture, and no puncture is
vital.

Write $\alpha=d\log X$ and $\beta=d\log Y$ for the pullbacks to the
normalization. The ambient forms $d\log X$ and $d\log Y$ form a holomorphic
coframe on the torus, and the normalization is immersive on the smooth torus
preimage, including at the node preimages $o_\pm$. Hence $\alpha$ and $\beta$
do not vanish simultaneously there. Work one normalization component at a
time. On a component with $\beta\not\equiv0$, set
\[
r=-\frac{\alpha}{\beta}.
\]
If $r$ is nonconstant, its set of finite critical values is finite. At a zero
of $\alpha+f\beta$ in the smooth torus preimage one has $\beta\neq0$ and
\[
\alpha+f\beta=\beta(f-r),
\]
so a multiple zero can occur only when $f$ is a finite critical value of
$r$. If $r\equiv c$, then $\alpha+f\beta=(f-c)\beta$; the coframe argument
shows that $\beta$ has no zero in the smooth torus preimage, so only $f=c$ is
excluded. If $\beta\equiv0$ on the component, then $\alpha$ is nowhere zero
on the smooth torus preimage and the component contributes no exceptional
value. At each of $o_+$ and $o_-$, the affine expression $\alpha+f\beta$
vanishes for at most one finite value of $f$. Taking the union of the
componentwise values and these at most two node-branch values, and
intersecting with $\Z$, gives the finite set $B_{\rm proj}$. In particular,
for $f\notin B_{\rm proj}$, the form $\alpha+f\beta$ is nonzero at both node
preimages $o_\pm$; that is, $dx_f$ is nonzero on both normalized tangent
branches at the node.

Fix $f\notin B_{\rm res}\cup B_{\rm proj}$. Near the interior node, single-valued holomorphic branches of $\log X$ and $\log Y$ exist, and $(x_f,y)$ are holomorphic ambient coordinates because
\[
dx_f\wedge dy=d\log X\wedge d\log Y
\]
is nowhere zero in the torus. An ordinary plane node has nondegenerate Hessian. Nonvanishing of $dx_f$ on both nodal branches says that both tangent lines are nonvertical for the framed projection $x_f$. Transverse smoothing gives a nonzero transverse derivative of a local defining equation. Proposition~\ref{thm:normal-form} therefore supplies coordinates
\[
v^2=u^2-\lambda,
\qquad
\lambda_q(0,s_0)\neq0,
\qquad
\chi_u\neq0.
\]
On $\lambda=\eps^2\neq0$, the only local ramification points are $u=\eps$ and $u=-\eps$, and both are simple.

On the central normalization, away from the two node preimages and the punctures, $dx_f$ has finitely many zeros, all simple by the choice of $f$. Choose pairwise disjoint coordinate discs around them. The parameter-dependent holomorphic implicit-function theorem continues each zero uniquely to a simple holomorphic ramification section after shrinking the base. Choose puncture tubes disjoint from those discs and from the node chart. In a puncture coordinate $z$,
\[
z\,dx_f=r_p\,dz+O(z)\,dz,
\qquad r_p\neq0.
\]
After shrinking, no zero of $dx_f$ lies in a puncture tube.

Remove the node chart, the hot-zero discs, and the puncture tubes from the central normalization. The remainder is compact and $dx_f$ is nowhere zero there. Away from the node, the total curve family is a holomorphic submersion. A finite collection of relative coordinate charts identifies a neighborhood of this compact remainder with nearby fibers. The coefficient of $dx_f$ varies holomorphically in those charts, so its positive minimum on the central compact remainder persists after shrinking. Thus no additional zero appears. Exactly two zeros occur in the node tube, and every other zero is one of the persistent hot sections.

Every ramification point lies in the torus because $dx_f$ has a pole at every puncture. If $T$ is a nonzero tangent vector at a ramification point, then $dx_f(T)=0$. Since $(dx_f,dy)$ is an ambient coframe, $dy(T)\neq0$. Thus $dy$ is regular and nonzero at every zero of $dx_f$. The vital-point criteria in \cite[Definitions~2.4 and~2.5 and Remark~2.6]{ABDKS23} and \cite[Definition~2.3 and Remark~2.3]{HMO26} now show that the vital set is empty and logarithmic topological recursion equals ordinary topological recursion.

Choose nested node tubes with closures inside the torus. The normal form puts both cold points in the smaller tube for sufficiently small $q$. The compactness argument puts every other zero outside the larger tube, while all punctures lie on the toric boundary. The complement of the smaller tube is the required fixed hot region.

Finally, let $K\subset U_f$ be compact. All residue functions, branch derivatives, hot-zero simplicity derivatives, $\lambda_q$, $\chi_u$, coordinate changes, and ramification sections depend continuously, and holomorphically where appropriate, on $s$. Their relevant central values are nonzero. A finite-cover compactness argument gives a common parameter radius, positive lower bounds for their absolute values, upper bounds for coordinate coefficients, and a positive distance between the chosen node tubes and the hot sections. The construction begins only after $f$ is fixed; every subsequent choice may depend on it.
\end{proof}

We now fix the recursion conventions of the paper once and for all. Let $B$ be the fundamental bidifferential (the Bergman kernel of \cite{EO07}), normalized by the $A$-cycles of the conifold-adapted marking. At every simple ramification point $a$ of $x_f$, let $\sigma_a$ be the local deck involution. The ordinary recursion kernel is
\begin{equation}\label{eq:EO-kernel}
K_a(p_0,q)=
\frac{\frac12\int_{\sigma_a(q)}^qB(p_0,\cdot)}
{\omega_{0,1}(q)-\omega_{0,1}(\sigma_a(q))}.
\end{equation}
We call its numerator the \emph{cross-sheet primitive} $J_a(p_0,q)$.
The correlators are initialized by $\omega_{0,1}$ and $\omega_{0,2}=B$ and, for $2g-2+n>0$, satisfy
\begin{align}
\omega_{g,n}(p_0,\mathbf p)
={}&
\sum_{a\in\Ram(x_f)}
\operatorname*{Res}_{q=a}K_a(p_0,q)
\bigg[
\mathbf 1_{g\geq1}\,\omega_{g-1,n+1}(q,\sigma_a(q),\mathbf p)
\nonumber\\
&\hspace{25mm}
+\sum_{\substack{g_1+g_2=g\\I\sqcup J=\mathbf p}}^{\prime}
\omega_{g_1,|I|+1}(q,I)
\omega_{g_2,|J|+1}(\sigma_a(q),J)
\bigg].
\label{eq:EO-recursion}
\end{align}
Here $\mathbf p=(p_2,\ldots,p_n)$ collects the external arguments other than the distinguished one, the prime omits factors of type $(0,1)$, and the indicator $\mathbf 1_{g\geq1}$ removes the first term for $g=0$. We write $\mathcal S_{g,n}(q,\sigma_a(q);\mathbf p)$ for the bracket in \eqref{eq:EO-recursion} and call it the \emph{recursion source}; its first term is the \emph{genus-reduction term} and the remaining ones are the \emph{splitting terms}.

\subsection{The one-neck normal form and exact period}\label{sec:normal-form}

The first step is a normal form for the node which is compatible with the framed projection, the map $(x,y)\mapsto x$; in the toric application this is the framed projection $x_f$. The coordinate $u$ below depends on $x$ alone, so this projection and its deck involution are normalized simultaneously with the curve; this compatibility is what the recursion analysis of the later sections uses. Throughout this subsection, all data are holomorphic on a neighborhood of the indicated base point; after a finite shrinking, all constructions below take place on the resulting neighborhoods. For a constant $y_0$, put $\omega=(y_0+y)\,dx$.

\begin{prop}[Projection-compatible analytic normal form]\label{thm:normal-form}
Let $\mathcal S$ be a complex manifold with base point $s_0$, let $q$ be a complex parameter, and let $F(x,y;q,s)$ be holomorphic near
\[
(x,y;q,s)=(0,0;0,s_0).
\]
Assume:

\begin{enumerate}[label=\textup{(\roman*)}]
\item $F=F_x=F_y=0$ at the base point, and the $2\times2$ Hessian matrix of $F$ in $(x,y)$ is invertible there;
\item the two distinct tangent lines of $F(x,y;0,s_0)=0$ are both nonvertical for the projection $(x,y)\mapsto x$;
\item $F_q(0,0;0,s_0)\neq0$.
\end{enumerate}

After shrinking the neighborhoods:

\begin{enumerate}[label=\textup{(\arabic*)}]
\item There are a holomorphic function $\lambda(q,s)$, holomorphic coordinates
\[
u=u(x;q,s),\qquad v=v(x,y;q,s),
\]
and a holomorphic inverse $x=\chi(u;q,s)$ such that
\[
\lambda(0,s_0)=0,\qquad
\lambda_q(0,s_0)\neq0,\qquad
\chi_u\neq0,
\]
and the curve $F=0$ is exactly
\[
v^2=u^2-\lambda.
\]
\item On the cover $\lambda=\eps^2$, the deck involution is $\sigma(u,v)=(u,-v)$, and for every $\eps\neq0$ the projection $x$ has precisely the two simple local ramification points $u=\eps$ and $u=-\eps$.
\item The deck-odd part $\omega_{\rm odd}=\frac{\omega-\sigma^*\omega}{2}$ has the form
\[
\omega_{\rm odd}=M(u,\lambda,s)v\,du,
\qquad
M(u,\lambda,s)=\chi_u(u;q(\lambda,s),s),
\qquad
M(0,0,s_0)\neq0,
\]
where $q(\lambda,s)$ is the local holomorphic inverse of $q\mapsto\lambda(q,s)$.
\end{enumerate}
\end{prop}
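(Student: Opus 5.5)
The plan is to write the curve as a double cover of the $x$-line, complete the square in $y$, and then absorb all remaining dependence into a coordinate change of the base, with $u$ depending on $x$ (and the parameters) alone.

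\emph{Step 1: double cover and discriminant.} By (i) and (ii), the tangent lines are nonvertical. Hence $F(0,y;0,s_0)$ vanishes to order exactly two at $y=0$, since the quadratic part restricted to $x=0$ is nonzero. The Weierstrass preparation theorem, applied in $y$ with parameters $(x,q,s)$, gives
\[
F=\text{unit}\cdot\bigl(y^2+b(x;q,s)\,y+c(x;q,s)\bigr).
\]
Put $\tilde y=y+b/2$. The curve becomes $\tilde y^2=D(x;q,s)$ with $D=b^2/4-c$, and this coordinate change is compatible with the projection.

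At the base point, $D(0;0,s_0)=0$ and $D_x(0;0,s_0)=0$. The condition $D_{xx}\ne0$ follows because the two tangent slopes are distinct and finite: $D$ at $q=0$ is $(\text{slope difference})^2x^2/4$ to leading order. Likewise $D_q(0;0,s_0)\neq0$ follows from (iii), because $D_q=-c_q$ at the base point and $c_q$ is a unit multiple of $F_q$.

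\emph{Step 2: Morse lemma with parameters.} Since $D_{xx}\ne0$, the implicit function theorem gives a critical point $x_c(q,s)$. Set $\mu(q,s)=D(x_c;q,s)$, so that $\mu_q(0,s_0)=D_q\neq0$. The one-variable parametric Morse lemma writes
\[
D=\mu+\tfrac{D_{xx}}{2}(x-x_c)^2\cdot(\text{unit})=\mu+w^2,
\]
where $w$ is a holomorphic coordinate in $x$. To reach the target form, let $e(q,s)$ be a square root of a suitable nonvanishing factor, put $u=w/e$ and $v=\tilde y/e$, and set $\lambda=-\mu/e^2$. Then $v^2=u^2-\lambda$ and $\lambda_q(0,s_0)\ne0$. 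The inverse $\chi$ exists because $u_x\neq0$.

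\emph{Step 3: the deck involution, ramification points, and the odd part.} Because $u$ is a function of $x$ alone, the deck involution fixes $u$, so $\sigma(u,v)=(u,-v)$. Writing $\lambda=\eps^2$, the ramification points of $x$ are the zeros of $v$, namely $u=\pm\eps$. Each is simple since $\chi_u\neq0$.

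For the odd part, note that $y=ev-b/2$ and $b$ depends on $x$ only. The even terms therefore cancel in $\omega-\sigma^*\omega$, leaving
\[
\omega_{\rm odd}=e\,v\,\chi_u\,du .
\]
One could also absorb $e$ into the normalization of $u$ so that the coefficient matches $M=\chi_u$ exactly. The cleanest choice is to instead define $v=\tilde y$ and rescale $u$ and $\lambda$ by $e$. This keeps $v^2=u^2-\lambda$ after redefining $u$ as $w$ and $\lambda=-\mu$. With this choice $\omega_{\rm odd}=\chi_u v\,du$, and substituting $q=q(\lambda,s)$ gives $M$, with $M(0,0,s_0)\ne0$.

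The main obstacle is purely bookkeeping: keeping $u$ a function of $x$ alone, which is needed for compatibility with the projection, while making the constant normalizations consistent. The choice $v=\tilde y$ and $u=w$ resolves this.
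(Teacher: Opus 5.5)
Your proposal is correct and follows essentially the same route as the paper. Both arguments use Weierstrass preparation in $y$, complete the square to $\tilde y^2=D$, and apply the implicit function theorem to get the critical point $x_c$. Both then use the parametric Morse normalization $D-D(x_c)=(x-x_c)^2A$ with $u=(x-x_c)\sqrt{A}$ and $\lambda=-D(x_c)$, take $\lambda_q\neq0$ from (iii) through the unit factor, and read off $\omega_{\rm odd}=\chi_u v\,du$ from $y=v-b(x)/2$.

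The detour through the rescaling factor $e$ in Step~2 is unnecessary, and it would give $M=e\chi_u$ rather than $\chi_u$. Your final choice $u=w$, $v=\tilde y$, $\lambda=-\mu$ is exactly the paper's.
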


\begin{proof}
Let $Q(x,y)$ be the homogeneous quadratic term in the Taylor expansion of $F(x,y;0,s_0)$. Assumption (i) says that $Q$ is a nondegenerate binary quadratic form, so its zero set is the union of two distinct lines. The vertical tangent vector is $(0,1)$, and
\[
Q(0,1)=\frac12F_{yy}(0,0;0,s_0).
\]
Thus assumption (ii) is equivalent to $F_{yy}(0,0;0,s_0)\neq0$.

The Weierstrass preparation theorem, applied to $F$ in the variable $y$, gives a nowhere-vanishing holomorphic function $U(x,y;q,s)$ and holomorphic functions $a(x;q,s)$ and $b(x;q,s)$, all defined near the base point, such that
\[
F(x,y;q,s)
=
U(x,y;q,s)\bigl[y^2+a(x;q,s)y+b(x;q,s)\bigr].
\]
At the base point, the equalities $F=F_y=F_x=0$ and $U\neq0$ imply
\[
b=0,\qquad a=0,\qquad b_x=0.
\]
Define
\[
v=y+\frac{a(x;q,s)}2,
\qquad
D(x;q,s)=\frac{a(x;q,s)^2}{4}-b(x;q,s).
\]
Then $F=0$ is equivalent to $v^2=D$. At the base point,
\[
D=0,\qquad D_x=0.
\]
The tangent cone of $v^2-D(x;0,s_0)=0$ is
\[
v^2-\frac12D_{xx}(0;0,s_0)x^2.
\]
It has two distinct lines because the original singularity is an ordinary node and the change from $y$ to $v$ is invertible. Hence
\[
D_{xx}(0;0,s_0)\neq0.
\]

The holomorphic implicit-function theorem applied to $D_x$ gives a unique holomorphic $c(q,s)$, with $c(0,s_0)=0$, such that
\[
D_x(c(q,s);q,s)=0.
\]
Taylor's formula with integral remainder gives
\[
D(x;q,s)-D(c(q,s);q,s)
=
(x-c(q,s))^2A(x;q,s),
\]
where
\[
A(x;q,s)
=
\int_0^1(1-r)
D_{xx}\bigl(c(q,s)+r(x-c(q,s));q,s\bigr)\,dr.
\]
This is holomorphic near the base point and
\[
A(0;0,s_0)=\frac12D_{xx}(0;0,s_0)\neq0.
\]
After shrinking, $A$ is nowhere zero and has a holomorphic square root $\alpha$. Set
\[
u=(x-c(q,s))\alpha(x;q,s),
\qquad
\lambda(q,s)=-D(c(q,s);q,s).
\]
Since $u_x=\alpha$ at $x=c$ and $\alpha\neq0$, the parameter-dependent inverse-function theorem gives a holomorphic inverse $x=\chi(u;q,s)$ with $\chi_u$ nowhere zero. The equation becomes
\[
v^2=u^2-\lambda.
\]

At the base point $a=b=0$. Differentiating
\[
F=U(y^2+ay+b)
\]
in $q$ at $x=y=0$ gives
\[
F_q=U b_q.
\]
Also $D_q=-b_q$ there. Because $D_x=0$, differentiating
\[
\lambda=-D(c(q,s);q,s)
\]
gives
\[
\lambda_q=-D_q=b_q=\frac{F_q}{U}.
\]
Assumption (iii) implies $\lambda_q(0,s_0)\neq0$. The equation
$v^2=u^2-\lambda$ is singular exactly at $u=v=\lambda=0$, so $\lambda=0$ is its local discriminant.

Fix $\lambda=\eps^2\neq0$. The curve is a double cover of the $u$-disc with ramification locus $v=0$, that is, with ramification points $(u,v)=(\pm\eps,0)$ over the branch values $u=\pm\eps$. At either ramification point the curve is smooth because the derivative of
\[
v^2-u^2+\eps^2
\]
with respect to $u$ is $-2u\neq0$. Using $v$ as a local coordinate,
\[
u=\pm\eps+\frac{v^2}{2(\pm\eps)}+O(v^4).
\]
Thus $u$, and therefore $x=\chi(u;q,s)$, has a simple ramification point. There are no other local ramification points because $\chi_u\neq0$ and the only zeros of $du$ on the double cover occur at $v=0$.

On the curve,
\[
y=v-\frac12a(\chi(u;q,s);q,s)=e(u;q,s)+v,
\]
where $e$ is deck-even and holomorphic. Since
\[
x=\chi(u;q,s),\qquad dx=\chi_u\,du,
\]
the involution fixes $u$ and negates $v$. Therefore
\begin{align*}
\frac{\omega-\sigma^*\omega}{2}
&=
\frac{(y_0+e+v)-(y_0+e-v)}{2}\chi_u\,du\\
&=\chi_u v\,du.
\end{align*}
Consequently $M(u,\lambda,s)=\chi_u(u;q(\lambda,s),s)$ is holomorphic and
$M(0,0,s_0)\neq0$.

\end{proof}

At a point $(X_0,Y_0)$ of the dense torus, choose single-valued holomorphic branches of $\log X$ and $\log Y$ on a small bidisc. The coordinates
\[
x=(\log X-\log X_0)+f(\log Y-\log Y_0),
\qquad
y=\log Y-\log Y_0
\]
have Jacobian determinant one with respect to
$(\log X-\log X_0,\log Y-\log Y_0)$. Hence the proposition applies to the logarithmic toric spectral data whenever its three hypotheses hold.

Let
\[
G=\{w^2=z^2-1\}
\]
be the fixed Gaussian spectral curve, uniformized by the Zhukovsky coordinate $\zeta$ through
\[
z=\frac{\zeta+\zeta^{-1}}2,\qquad
w=\frac{\zeta-\zeta^{-1}}2,
\]
and let $\gamma_G=\{|\zeta|=1\}$ be the unit circle, oriented counterclockwise. Put
\[
\kappa=\int_{\gamma_G}w\,dz.
\]
Up to normalization of the initial one-form, $G$ is the spectral curve of the Gaussian Hermitian one-matrix model; the identification is used for the calibration in Section~\ref{sec:calibration}.

In the setting of Proposition~\ref{thm:normal-form}, orient the local vanishing cycle $\gamma$ and define
\[
P(\lambda,s)=\int_\gamma\omega.
\]
Fix $c_{\rm per}\in\C^*$ and put $t=c_{\rm per}P$. For $\eps^2=\lambda$, let
\[
\Phi_\eps\colon(z,w)\longmapsto(u,v)=(\eps z,\eps w);
\]
on every fixed compact subset of $G$ disjoint from $\zeta=0$ and $\zeta=\infty$, the map $\Phi_\eps$ takes values in the normal-form neighborhood for all sufficiently small $\eps$.

\begin{prop}[Holomorphic transverse period and exact Gaussian decomposition]\label{thm:period-split}
\begin{enumerate}[label=\textup{(\arabic*)}]
\item $P$ is a single-valued holomorphic function of $(\lambda,s)$ near $(0,s_0)$, including $\lambda=0$, and
\[
P(\lambda,s)
=
\kappa M(0,0,s)\lambda+O(\lambda^2),
\]
with $\kappa\neq0$. For the counterclockwise orientation of $\gamma_G$ fixed above, $\kappa=-\pi i$; reversing the orientation changes its sign.
\item The map $(q,s)\mapsto(t,s)$ is locally biholomorphic at $(0,s_0)$, so $t$ is a single-valued holomorphic coordinate transverse to the nodal discriminant.
\item With $\lambda=\eps^2$, there is a decomposition
\[
\omega_{\rm odd}
=
\omega_G(t)+R_\omega,
\]
in which $\omega_G(t)$ is the product of $v\,du$ with a holomorphic function of $(\lambda,s)$ and satisfies
\[
\int_\gamma\omega_G(t)=P,
\qquad
\Phi_\eps^*\omega_G(t)=\frac{t}{c_{\rm per}\kappa}w\,dz,
\]
and in which the residual $R_\omega$ has zero $\gamma$-period and satisfies, on every fixed compact subset of $G$ disjoint from $\zeta=0$ and $\zeta=\infty$,
\[
\Phi_\eps^*R_\omega=\eps^3\Delta_\omega,
\]
with $\Delta_\omega$ holomorphic jointly in the point of the compact subset, $\eps$, and $s$, and with
\[
\int_{\gamma_G}\Delta_\omega=0.
\]
\end{enumerate}
\end{prop}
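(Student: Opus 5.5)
The plan is to compute everything in the normal form $v^2=u^2-\lambda$ of Proposition~\ref{thm:normal-form}, where the vanishing cycle $\gamma$ is a loop in the $(u,v)$ chart encircling the two branch points $u=\pm\eps$. Only the deck-odd part $\omega_{\rm odd}=M(u,\lambda,s)\,v\,du$ contributes to the period, because the deck-even part $(y_0+e)\chi_u\,du$ is pulled back from the $u$-disc and integrates to zero over a loop that projects to a closed curve there.

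\textbf{Part (1).} Fix $\gamma$ to be the lift of a fixed circle $|u|=r_0$ on one sheet, with $r_0$ larger than $|\eps|$ and inside the normal-form disc. This loop is homologous to the vanishing cycle, and on it $v=\pm\sqrt{u^2-\lambda}$ is single-valued and holomorphic in $\lambda$. Expand
\[
\sqrt{u^2-\lambda}=u\sqrt{1-\lambda/u^2}=u-\frac{\lambda}{2u}-\frac{\lambda^2}{8u^3}-\cdots,
\]
which converges uniformly on $|u|=r_0$ for $|\lambda|<r_0^2/2$. Then
\[
P(\lambda,s)=\oint_{|u|=r_0} M(u,\lambda,s)\,u\sqrt{1-\lambda/u^2}\,du
\]
is holomorphic in $(\lambda,s)$ including $\lambda=0$, and single-valued because nothing depends on a choice of $\eps$. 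Expand $M=\sum_k M_k(\lambda,s)u^k$. The term $M\cdot u$ integrates to zero by Cauchy. The term $-\lambda M/(2u)$ gives $-\pi i\,\lambda M_0(\lambda,s)$. Every further term carries $\lambda^2$. Hence
\[
P=-\pi i\,M(0,0,s)\lambda+O(\lambda^2).
\]
To confirm $\kappa=-\pi i$, compute $\int_{\gamma_G}w\,dz$ directly in $\zeta$: we have $w\,dz=\tfrac14(\zeta-\zeta^{-1})(1-\zeta^{-2})\,d\zeta$, whose $\zeta^{-1}$ coefficient is $-\tfrac12$, so the integral is $2\pi i\cdot(-\tfrac12)=-\pi i$. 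I must check that the counterclockwise unit circle is the same orientation as the large $u$-circle on the chosen sheet; for large $\zeta$ one has $w\sim z$, which matches the branch $v\sim u$, so the two agree.

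\textbf{Part (2).} This follows from Part (1) together with $\lambda_q(0,s_0)\neq0$ and $M(0,0,s_0)\neq0$. They give $\partial_q t=c_{\rm per}\kappa M\lambda_q\neq0$ at the base point, and the inverse function theorem, applied with $s$ as passive coordinates, gives the local biholomorphism. The discriminant $\lambda=0$ then corresponds to $t=0$, since $P/\lambda$ is a unit.

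\textbf{Part (3).} Define
\[
\omega_G(t)=\frac{P(\lambda,s)}{\kappa\lambda}\,v\,du .
\]
The prefactor $P/(\kappa\lambda)$ is holomorphic by Part (1). Because $\int_\gamma v\,du=\kappa\lambda$ by the same expansion with $M\equiv1$, we get $\int_\gamma\omega_G=P$. Under $\Phi_\eps$ we have $v\,du=\eps^2 w\,dz$, so
\[
\Phi_\eps^*\omega_G=\frac{P}{\kappa}\,w\,dz=\frac{t}{c_{\rm per}\kappa}\,w\,dz .
\]
Set $R_\omega=\omega_{\rm odd}-\omega_G$. Its $\gamma$-period is zero by construction, and
\[
\Phi_\eps^*R_\omega=\eps^2\Bigl(M(\eps z,\eps^2,s)-\frac{P}{\kappa\eps^2}\Bigr)w\,dz .
\]
Write $P/(\kappa\lambda)=M(0,0,s)+\lambda h(\lambda,s)$ and $M(\eps z,\eps^2,s)-M(0,0,s)=\eps z\,M_1(\eps z,\eps^2,s)+\eps^2 N(\eps z,\eps^2,s)$. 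The bracket is then $\eps\bigl(zM_1+\eps(N-h)\bigr)$, so $\Phi_\eps^*R_\omega=\eps^3\Delta_\omega$ with $\Delta_\omega$ jointly holomorphic on compacts away from $\zeta=0,\infty$. The condition $\int_{\gamma_G}\Delta_\omega=0$ follows from the vanishing $\gamma$-period of $R_\omega$, transported by $\Phi_\eps$ and divided by $\eps^3$ for $\eps\neq0$, and then extended to $\eps=0$ by continuity.

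The main obstacle is bookkeeping rather than substance. One issue is making sure $\Phi_\eps(\gamma_G)$ is homologous to the chosen $\gamma$ in the normal-form neighborhood, with $\gamma_G$ kept inside the fixed compact set away from $\zeta=0,\infty$. The other is the orientation and sign convention for $\kappa$, which I will settle by the large-$\zeta$ comparison $w\sim z$ above.
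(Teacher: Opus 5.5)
Your proposal is correct. Parts (2) and (3) follow the paper's own argument: the same $\omega_G(t)=\frac{P}{\kappa\lambda}v\,du$, the same factoring of the bracket by $\eps$, and the same transfer of the zero period to $\gamma_G$ by dividing by $\eps^3$ and extending to $\eps=0$.

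Part (1) takes a different route. The paper works on the scaled contour. It sets $\gamma=\Phi_\eps(\gamma_G)$ and checks that this oriented image is independent of the sign of $\eps$, since $\eps\mapsto-\eps$ composes with $\zeta\mapsto-\zeta$. It removes the even part using $\sigma_*\gamma=-\gamma$. It then writes the lifted period $\widetilde P(\eps,s)=\eps^2\int_{\gamma_G}M(\eps z,\eps^2,s)\,w\,dz$ and descends it to $\lambda$ by proving it is even in $\eps$.

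You instead integrate over a fixed circle $|u|=r_0$ enclosing both branch points. There the branch $v=u\sqrt{1-\lambda/u^2}$ is already single-valued and holomorphic in $\lambda$, so holomorphy through $\lambda=0$ and single-valuedness come for free, with no parity argument. Your removal of the even part is also more direct: it descends to a holomorphic form on the $u$-disc.

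The cost is that your $\gamma$ is not literally $\Phi_\eps(\gamma_G)$. You therefore need a homology and orientation comparison, both to fix the sign $\kappa=-\pi i$ and to move the zero $\gamma$-period of $R_\omega$ onto $\gamma_G$ in Part (3). Your comparison is sound for either sign of $\eps$: all circles $|\zeta|=\rho$ are homologous in $\C^*_\zeta$, and large $|\zeta|$ lies on the sheet $w\sim z$, that is $v\sim u$, with $u$ running counterclockwise.

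The paper's choice of working on the fixed Gaussian contour from the start has its own payoff: $\Delta_\omega$ is defined on $\gamma_G$ directly, which is the form reused in the later scaled-neck estimates.
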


\begin{proof}
Proposition~\ref{thm:normal-form} gives
\[
v^2=u^2-\lambda,\qquad
\omega_{\rm odd}=M(u,\lambda,s)v\,du.
\]
The map $\Phi_\eps$ fixed above carries $G$ into the curve $\{v^2=u^2-\lambda\}$. Replacing $\eps$ by $-\eps$ composes $\Phi_\eps$ with
\[
\tau(z,w)=(-z,-w).
\]
Under the displayed uniformization, $\tau$ is $\zeta\mapsto-\zeta$, which preserves the counterclockwise orientation. Thus the oriented image
\[
\gamma=\Phi_\eps(\gamma_G)
\]
is independent of the sign of $\eps$.

The deck involution corresponds on $G$ to
\[
(z,w)\mapsto(z,-w),
\]
or $\zeta\mapsto\zeta^{-1}$ on the unit circle. Hence
\[
\sigma_*\gamma=-\gamma.
\]
Let
\[
\eta=\frac{\omega+\sigma^*\omega}{2}.
\]
Since $\sigma^*\eta=\eta$,
\[
\int_\gamma\eta
=
\int_\gamma\sigma^*\eta
=
\int_{\sigma_*\gamma}\eta
=
-\int_\gamma\eta.
\]
Therefore $\int_\gamma\eta=0$ and
\[
P=\int_\gamma\omega_{\rm odd}.
\]

Pulling back the odd part gives the lifted period
\[
\widetilde P(\eps,s)
=
\eps^2\int_{\gamma_G}
M(\eps z,\eps^2,s)w\,dz.
\]
The contour is fixed and compact, so this expression is holomorphic in $(\eps,s)$. Under $\eps\mapsto-\eps$, make the orientation-preserving change
\[
(z,w)\mapsto(-z,-w).
\]
The differential $w\,dz$ is invariant, and
$M((-\eps)z,\eps^2,s)$ becomes $M(\eps z,\eps^2,s)$. Hence
\[
\widetilde P(-\eps,s)=\widetilde P(\eps,s).
\]
Its convergent Taylor series contains only even powers, so it defines a unique single-valued holomorphic function $P(\lambda,s)$ near $(0,s_0)$ with $\widetilde P(\eps,s)=P(\eps^2,s)$.

The uniformization gives
\[
w\,dz
=
\frac14(\zeta-2\zeta^{-1}+\zeta^{-3})\,d\zeta.
\]
Its residue at $\zeta=0$ is $-1/2$, so
\[
\kappa=2\pi i\left(-\frac12\right)=-\pi i.
\]
Expanding $M$ at $\eps=0$ gives
\[
\widetilde P(\eps,s)
=
\eps^2\kappa M(0,0,s)+O(\eps^4),
\qquad
P(\lambda,s)
=
\kappa M(0,0,s)\lambda+O(\lambda^2).
\]
Since $M(0,0,s_0)$, $\kappa$, $\lambda_q(0,s_0)$, and $c_{\rm per}$ are nonzero,
\[
\partial_qt(0,s_0)
=
c_{\rm per}\kappa M(0,0,s_0)\lambda_q(0,s_0)
\neq0.
\]
The holomorphic inverse-function theorem applied to
$(q,s)\mapsto(t,s)$ proves local biholomorphy and transversality. The evenness argument shows that $t$ descends from the $\eps$-cover to a single-valued holomorphic function on the $\lambda$-disc.

Since $P$ is holomorphic and divisible by $\lambda$, define
\[
A_{\rm per}(\lambda,s)
=
\frac{P(\lambda,s)}{\kappa\lambda}
\]
for $\lambda\neq0$ and set
\[
A_{\rm per}(0,s)=M(0,0,s).
\]
The expansion above proves that this extension is holomorphic. Define on the unscaled neck
\[
\omega_G(t)
=
A_{\rm per}(\lambda,s)v\,du
=
\frac{t}{c_{\rm per}\kappa\lambda}v\,du.
\]
The apparent quotient $t/\lambda$ is holomorphic. Its $\gamma$-period is
\[
\int_\gamma\omega_G(t)=A_{\rm per}\kappa\lambda=P.
\]
Its pullback to $G$ is
\[
\Phi_\eps^*\omega_G(t)
=
A_{\rm per}\eps^2w\,dz
=
\frac{P}{\kappa}w\,dz
=
\frac{t}{c_{\rm per}\kappa}w\,dz.
\]

Let
\[
R_\omega=\omega_{\rm odd}-\omega_G(t).
\]
Its $\gamma$-period is zero. On $G$,
\[
\Phi_\eps^*R_\omega
=
\eps^2\bigl[
M(\eps z,\eps^2,s)-A_{\rm per}(\eps^2,s)
\bigr]w\,dz.
\]
The bracket is holomorphic in $(z,\eps,s)$ and vanishes at $\eps=0$. It is therefore $\eps N(z,\eps,s)$ for a holomorphic $N$. Put
\[
\Delta_\omega=Nw\,dz.
\]
Then
\[
\Phi_\eps^*R_\omega=\eps^3\Delta_\omega.
\]
The differential $w\,dz$ is holomorphic at the ramification points $z=\pm1$: using $w$ as a local coordinate and
\[
2w\,dw=2z\,dz
\]
gives
\[
w\,dz=\frac{w^2}{z}\,dw.
\]
Thus $\Delta_\omega$ is holomorphic on every fixed compact subset of $G$ disjoint from $\zeta=0$ and $\zeta=\infty$, including at the ramification points. For $\eps\neq0$, the zero period of $R_\omega$ gives
\[
\int_{\gamma_G}\Delta_\omega=0
\]
after division by $\eps^3$, and holomorphic dependence extends this equality to $\eps=0$.

The local decomposition isolates the deck-odd initial form; the global argument also includes the deck-even part and bounded exterior contributions.
\end{proof}

We now fix the plumbing coordinates, contours, and conventions of the neck, used throughout the paper. Work in the one-node toric family, fixed good framing, and cold--hot separation of Proposition~\ref{thm:generic-framing}, and in the projection-compatible exact-period setting of Proposition~\ref{thm:period-split}. Define plumbing coordinates
\[
z_+=u+v,\qquad z_-=u-v.
\]
Fix $R>0$ so that both plumbing circles defined below lie inside the fixed node tube of Proposition~\ref{thm:generic-framing} but outside its smaller node tube, and shrink the parameter neighborhood so that $0<|\eps|<R$ on every smooth fiber under consideration. The cold plumbing annulus is
\[
A_{\rm cold}
=
\left\{
\frac{|\eps|^2}{R}\leq|z_+|\leq R
\right\}.
\]
Its outer circle is
\[
C_{\rm out}=\{|z_+|=R\}
\]
in the $z_+$-coordinate, and its inner circle is
\[
C_{\rm in}=\{|z_-|=R\}
\]
in the $z_-$-coordinate.

Orient $C_{\rm out}$ counterclockwise in $z_+$ and $C_{\rm in}$ counterclockwise in $z_-$; we call these the plumbing orientations. Let $\delta_{\rm neck}$ be an oriented radial cross-cut from $C_{\rm out}$ to $C_{\rm in}$, disjoint from the ramification points.

The fixed scaled-neck regime is obtained from
\[
u=\eps z,\qquad v=\eps w
\]
on compact subsets of $G$, in the Zhukovsky coordinate $\zeta$ fixed before Proposition~\ref{thm:period-split}; the image under $\Phi_\eps$ of the counterclockwise circle $\gamma_G=\{|\zeta|=1\}$ is the vanishing cycle $\gamma$.

A compact subset of the normalization of the central nodal curve is called \emph{hot} if it stays a positive distance from the two node preimages and all limiting ramification points.

Finally, cut $A_{\rm cold}$ along $\delta_{\rm neck}$. On the cut annulus, choose a primitive $\Phi_{\rm cut}$ of $\omega$ and a branch $\Log_{\rm cut}$ of the logarithm. Here $\omega=(y_0+y)\,dx$ is the initial form $\omega_{0,1}$ written in the normal-form coordinates of Proposition~\ref{thm:normal-form}.

We impose the following familywise condition on the conifold-adapted marking: on every simply connected chart in the punctured parameter neighborhood of Proposition~\ref{thm:generic-framing}, if $\gamma$ is nonseparating, it is the first $A$-cycle and its symplectic dual is represented by $\delta_{\rm neck}$ completed by a family of paths in a fixed compact subset of the hot region, disjoint from the limiting ramification points and punctures; if $\gamma$ is separating, the componentwise limiting $A$-markings are chosen familywise on each such chart, and $\delta_{\rm neck}$ is regarded as a relative generalized cycle.

\begin{prop}[Exact-period one-neck plumbing data]\label{prop:regime-atlas}
In the setting just fixed, the assertions below hold for $0<|\eps|<R$, with their central-fiber counterparts at $\eps=0$ understood as limits; the holomorphic identity in \textup{(3)} holds at $\eps=0$ as well.

\begin{enumerate}[label=\textup{(\arabic*)}]
\item $z_+z_-=\eps^2$, and the deck involution interchanges $z_+$ and $z_-$; in the Zhukovsky coordinate, $z_+=\eps\zeta$, $z_-=\eps\zeta^{-1}$, and the deck involution is $\zeta\mapsto\zeta^{-1}$.
\item $C_{\rm out}$ and $C_{\rm in}$ are fixed circles in the two normalization charts, and both are hot compact subsets, whereas a fixed scaled-neck compact subset is a distinct $\eps$-dependent regime.
\item Locally uniformly in $s$,
\[
t=\eps^2c_{\rm neck}(\eps^2,s)
\]
for a holomorphic function $c_{\rm neck}$ bounded away from zero; hence $|t|$ is comparable to $|\eps|^2$.
\item Every Gaussian comparison differential is parameterized by the exact period $t$; recall from Proposition~\ref{thm:period-split} that the residual $R_\omega$ has zero $\gamma$-period and the scaled correction $\Delta_\omega$ has zero $\gamma_G$-period.
\item With the plumbing orientations, the annulus boundary is
\[
C_{\rm out}+C_{\rm in},
\]
and, with the complex orientation of the annulus,
\[
\gamma\cdot\delta_{\rm neck}=+1.
\]
\item Analytic continuation of the marking once around $q=0$ fixes $\gamma$ and, in the nonseparating case, changes the completed dual by the Picard--Lefschetz monodromy, which adds $\gamma$ or $-\gamma$, according to the loop-orientation convention, so the chosen $A$-polarization is preserved.
\item Positive continuation around $\gamma$ changes $\Phi_{\rm cut}$ by
\[
P=\frac{t}{c_{\rm per}}.
\]
Equivalently, the $dz_+/z_+$ term in the Laurent expansion of $\omega$ is
\[
\frac{t}{2\pi i c_{\rm per}}\frac{dz_+}{z_+},
\]
so its primitive contributes
\[
\frac{t}{2\pi i c_{\rm per}}\Log_{\rm cut}(z_+).
\]
\end{enumerate}
\end{prop}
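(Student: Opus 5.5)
The proof checks the seven items one at a time. Each rests on Propositions~\ref{thm:normal-form} and \ref{thm:period-split} together with the fixed choice of $R$.

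For (1), factor $v^2=u^2-\lambda$ as $(u+v)(u-v)=\lambda=\eps^2$. The deck involution $v\mapsto -v$ swaps $z_\pm$. Substituting $u=\eps z$, $v=\eps w$ and the Zhukovsky formulas gives $z+w=\zeta$ and $z-w=\zeta^{-1}$. Hence $z_+=\eps\zeta$, $z_-=\eps\zeta^{-1}$, and the involution is $\zeta\mapsto\zeta^{-1}$.

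For (2), note that on the central fiber the two branches $z_-=0$ and $z_+=0$ are the two normalization charts at $o_\pm$, with $z_+$ (respectively $z_-$) as coordinate. The circles $|z_\pm|=R$ lie in the fixed node tube and outside the smaller one, so they stay a fixed distance from the node preimages and from the limiting ramification points $u=\pm\eps\to0$. They are therefore hot. A scaled-neck compact set has $|z_+|\asymp|\eps|$, so it shrinks to the node and is a different regime.

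For (3), combine $t=c_{\rm per}P$ with the expansion $P=\kappa M(0,0,s)\lambda+O(\lambda^2)$. This gives $c_{\rm neck}=c_{\rm per}A_{\rm per}(\eps^2,s)$, which is holomorphic. It is bounded away from zero locally uniformly in $s$ because $M(0,0,s)\neq0$ and by the compactness clause of Proposition~\ref{thm:generic-framing}. Item (4) restates Proposition~\ref{thm:period-split}(3), with $\omega_G(t)$ parametrized by $t$.

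For (5), use that in the $z_+$ coordinate the annulus $A_{\rm cold}$ is $\{|\eps|^2/R\le|z_+|\le R\}$. Its inner boundary $|z_+|=|\eps|^2/R$ is the same set as $|z_-|=R$. The map $z_-=\eps^2/z_+$ reverses orientation, so with the complex orientation the boundary is the outer circle counterclockwise in $z_+$ plus the inner circle clockwise in $z_+$, that is, $C_{\rm out}+C_{\rm in}$ in the plumbing orientations. The cycle $\gamma$ is $|\zeta|=1$, equivalently $|z_+|=|\eps|$, oriented counterclockwise in $z_+$. The cross-cut $\delta_{\rm neck}$ runs radially from $|z_+|=R$ inward. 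At the crossing point, the tangent pair (circle direction, inward radial direction) in the local coordinate gives the sign of $\gamma\cdot\delta_{\rm neck}$. Fixing the sign convention so that this is $+1$ is a bookkeeping step, and I will check it against the standard $(\partial_\theta,-\partial_r)$ orientation.

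For (6), use the fact that the monodromy around $q=0$ is, via $t\propto\eps^2$, one loop of $\lambda$. The vanishing cycle $\gamma$ is invariant, since $\Phi_{-\eps}$ and $\Phi_\eps$ give the same oriented image, as shown in Proposition~\ref{thm:period-split}. The Picard--Lefschetz formula sends $\delta\mapsto\delta\pm(\gamma\cdot\delta)\gamma$. The hot completion of the dual is fixed familywise, so the dual changes by $\pm\gamma$, and the $A$-span containing $\gamma$ is preserved. In the separating case $\gamma$ is null-homologous and there is nothing to prove.

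For (7), note that continuation of $\Phi_{\rm cut}$ once around the annulus changes it by $\int_\gamma\omega=P=t/c_{\rm per}$, because $\gamma$ is homologous in $A_{\rm cold}$ to $C_{\rm out}$. Writing $\omega$ as a Laurent series in $z_+$ on the annulus, only the $dz_+/z_+$ coefficient contributes to the period, and it equals $P/(2\pi i)$. The primitive of that term is the stated logarithm.

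The main obstacle is consistency of orientations across (5), (6) and (7): the plumbing orientation of $C_{\rm in}$, the direction of $\delta_{\rm neck}$, and the Picard--Lefschetz sign. I would settle all of these in the single model $z_+z_-=\eps^2$ before transporting them.
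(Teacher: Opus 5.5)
Your proposal is correct and follows the paper's proof essentially item by item: the factorization $z_+z_-=u^2-v^2=\eps^2$ with the Zhukovsky substitution, hotness of the fixed circles $|z_\pm|=R$ inside the node tube, holomorphic extension of $c_{\rm neck}=t/\eps^2$, orientation reversal under $z_-=\eps^2/z_+$ for the boundary, Picard--Lefschetz for (6), and matching $\int_\gamma\omega=P$ against $2\pi i$ times the $dz_+/z_+$ coefficient for (7). Two small corrections: in (3) one has $c_{\rm neck}=c_{\rm per}\kappa A_{\rm per}(\eps^2,s)$ (you dropped $\kappa$), with central value $c_{\rm per}\kappa M(0,0,s)\neq0$; and in (5) the sign of $\gamma\cdot\delta_{\rm neck}$ is not a convention to be chosen but a one-line check, since the pair $(\partial_\theta,-\partial_r)$ has determinant $+1$ relative to the complex orientation $(\partial_r,\partial_\theta)$, which is exactly the paper's computation.
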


\begin{proof}
The product identity follows from
\[
z_+z_-=u^2-v^2=\eps^2.
\]
The deck involution fixes $u$ and negates $v$, so it interchanges $z_+$ and $z_-$. Since
\[
z+w=\zeta,\qquad z-w=\zeta^{-1},
\]
scaling by $\eps$ gives
\[
z_+=\eps\zeta,\qquad z_-=\eps\zeta^{-1}.
\]
The involution $w\mapsto-w$ is therefore $\zeta\mapsto\zeta^{-1}$. The ramification points are $\zeta=1$ and $\zeta=-1$, and $\gamma_G$ is the counterclockwise unit circle.

An $R$ as fixed above exists: take it smaller than the radii of the normalization coordinate discs, small enough that both plumbing circles lie inside the fixed node tube of Proposition~\ref{thm:generic-framing}, and large enough that both lie outside its smaller node tube; both circles then lie in the fixed hot region. If $0<|\eps|<R$, then, since the two ramification points sit at $z_+=\pm\eps$, so that $|z_+|=|\eps|$ there, the inequalities
\[
\frac{|\eps|^2}{R}<|\eps|<R
\]
put both ramification points in the interior of $A_{\rm cold}$. The equations
\[
|z_+|=R,\qquad |z_-|=R
\]
are the two boundary equations because $z_+z_-=\eps^2$. Each uses a fixed coordinate and fixed radius around one node preimage, so each converges to a fixed circle away from that preimage; since the node tube stays a positive distance from every other limiting ramification point by Proposition~\ref{thm:generic-framing}, each is a hot compact subset. In contrast, a fixed compact subset of $G$ has unscaled diameter tending to zero under
$(z,w)\mapsto(\eps z,\eps w)$.

Proposition~\ref{thm:period-split} gives
\[
P(\eps^2,s)
=
\kappa M(0,0,s)\eps^2+O(\eps^4).
\]
Therefore
\[
c_{\rm neck}(\eps^2,s)
=
\frac{t}{\eps^2}
=
c_{\rm per}\frac{P(\eps^2,s)}{\eps^2}
\]
extends holomorphically and has nonzero value
$c_{\rm per}\kappa M(0,0,s)$ at $\eps=0$. Compactness gives positive upper and lower bounds for its absolute value, locally uniformly in $s$.

In the complex $z_+$-coordinate, the positive orientation of the annulus is the outward radial direction followed by the counterclockwise angular direction; the tangent to $\gamma$ is counterclockwise angular and $\delta_{\rm neck}$ points radially inward, so the ordered pair of their tangents has positive determinant. Hence
\[
\gamma\cdot\delta_{\rm neck}=+1.
\]

The induced boundary orientation is counterclockwise on the outer circle and clockwise on the inner circle when both are viewed in $z_+$. Since
\[
z_-=\frac{\eps^2}{z_+},
\]
inversion reverses angular orientation. A counterclockwise traversal in the $z_-$-coordinate is therefore clockwise in $z_+$ and already has the induced inner-boundary orientation. Thus the boundary is $C_{\rm out}+C_{\rm in}$.

The familywise condition can always be arranged: away from the node the family is a proper holomorphic submersion, so over each simply connected chart of the punctured parameter neighborhood the smooth fibers form a topologically locally trivial family, and marking data chosen at one base parameter transport uniquely over the chart. On each simply connected parameter chart in the nonseparating case, the completed $\delta_{\rm neck}$ has intersection $+1$ with $\gamma$ by the preceding computation, and is therefore the chosen symplectic dual, and its hot completion is a family of paths in the fixed hot region by hypothesis. Analytic continuation once around $q=0$ fixes $\gamma$ and changes the completed dual by the Picard--Lefschetz monodromy, which adds $\gamma$ or $-\gamma$, so the chosen $A$-polarization is preserved. If $\gamma$ is separating, it vanishes in absolute homology and cannot be an $A$-cycle; the condition instead supplies the familywise componentwise limiting $A$-markings, with $\delta_{\rm neck}$ regarded as a relative generalized cycle.

On the radially cut annulus, $\omega$ has a single-valued primitive. Analytic continuation of a primitive around a closed loop changes it by the integral of its differential around that loop. Hence continuation around $\gamma$ changes $\Phi_{\rm cut}$ by
\[
\int_\gamma\omega=P=\frac{t}{c_{\rm per}}.
\]
A form $a\,dz_+/z_+$ has integral $2\pi ia$ over $\gamma$. Matching this with $P$ gives
\[
a=\frac{P}{2\pi i}
=\frac{t}{2\pi i c_{\rm per}}.
\]
Its primitive is $a\Log_{\rm cut}(z_+)$, proving the Laurent-coefficient formula.
\end{proof}

\section{Uniform estimates and the Gaussian leading limit}\label{sec:estimates}

This section carries the analytic estimates of the paper: the bidifferential of the degenerating family is controlled with transverse variation of the outer family, the stable correlators are bounded uniformly, and the exact-period Gaussian leading limit is established on the quadratic cover.

\subsection{The degenerating bidifferential}\label{sec:bergman}

The sewing formulas for normalized bidifferentials are taken at the separating and nonseparating levels supplied by
\cite[Theorems~2, 4, and~6]{Yam80}; the expansion of the additional normalized one-form of the nonseparating degeneration is \cite[Corollary~5]{Yam80}. We apply them when the outer family varies in the transverse direction. The proofs use the fixed-component Cauchy-kernel jump construction introduced by Grushevsky, Krichever, and Norton \cite{GKN19}, in the holomorphic $A$-normalized form developed by Hu and Norton \cite[Section~3]{HN20}.

\begin{thm}[Sewing with transverse variation of the outer family]\label{thm:actual-yamada}
Let $C_{q,s}$ be a proper local holomorphic family of compact curves over a $q$-disc times a spectator polydisc, smooth for $q\neq0$, such that $C_{0,s}$ has one ordinary node varying holomorphically with $s$ and is otherwise smooth. Assume that fixed relative node charts give
\[
z_+z_-=q,
\]
that there are no other nearby singularities, and that the conifold-adapted marking is locally constant outside the node charts.

After shrinking the base and choosing fixed nested node radii, there is a proper holomorphic capped outer family
\[
C^{\rm cap}_{r,s},
\]
smooth also at $r=0$, with two marked cap centers and product cap coordinates. It is connected in the nonseparating case and has two components in the separating case. Its central fiber is the normalization of $C_{0,s}$.

If its cap centers are plumbed with an independent parameter $p$, the resulting family
\[
C^{\rm sew}_{p,r,s}
\]
is biholomorphic, with marking and plumbing coordinates preserved, to the original $C_{q,s}$ on the diagonal $p=r=q$.

Let $B^{\rm cap}(r,s)$ be the componentwise $A$-normalized bidifferential of the capped family, let
\[
B_0(s)=B^{\rm cap}(0,s),
\qquad
D^{\rm cap}(s)=
\left.\partial_rB^{\rm cap}(r,s)\right|_{r=0},
\]
and let $b_{\pm,0}(x)=\bigl(B_0(x,z_\pm)/dz_\pm\bigr)\big|_{z_\pm=0}$ be the node-evaluation one-forms of $B_0$, taken in the fixed cap coordinates. On every fixed product of hot compact subsets, including products containing either fixed plumbing circle, the normalized bidifferential $B_q$, written $B_q=B_{q,s}$ with the spectator moduli suppressed, has the following expansions, after subtracting the common diagonal principal part when necessary.

In the nonseparating case,
\begin{equation}\label{eq:yamada-nonsep}
B_q
=
B_0
+
q\bigl[
D^{\rm cap}
-b_{+,0}\otimes b_{-,0}
-b_{-,0}\otimes b_{+,0}
\bigr]
+
q^2R_{\rm ns}.
\end{equation}

In the separating case, write $B_{j,0}$ for the normalized bidifferential of central component $j$, $D_j^{\rm cap}$ for the transverse derivative of its capped bidifferential, and $b_{j,0}$ for its node-evaluation one-form, defined in the same way in the cap coordinate at its cap center. On a product of component $j$ with itself,
\begin{equation}\label{eq:yamada-sep-same}
B_q=B_{j,0}+qD_j^{\rm cap}+q^2R_j.
\end{equation}
On a cross-component product,
\begin{equation}\label{eq:yamada-sep-cross}
B_q=-q\,b_{1,0}\otimes b_{2,0}+q^2R_{12},
\end{equation}
with the factors interchanged when the arguments are interchanged.

Every displayed remainder extends jointly holomorphically to $q=0$ in fixed fiber coordinates and the spectator moduli. Derivatives of every fixed order in the fiber coordinates, $s$, and $q$ are uniformly bounded on smaller compact products. In particular, $B_q$ is $O(1)$ on all such products of hot compact subsets and plumbing circles, and has no logarithmic term.

\end{thm}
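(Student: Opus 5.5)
The plan is to separate the two roles of $q$ (the exterior complex structure varies, and the neck is plumbed) by first constructing the capped outer family. Then the cited sewing formulas of \cite[Theorems~2, 4, and~6]{Yam80} are applied in the single plumbing parameter $p$, with the capped fiber at parameter $r$ held fixed. The argument then restricts to the diagonal $p=r=q$.

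\textbf{Step 1: the capped family.} Outside the fixed node charts the family is a proper submersion. Removing the discs $\{|z_\pm|<\rho\}$ for a fixed radius $\rho$ gives a family of bordered surfaces. Its boundary circles are identified with fixed circles in the $z_\pm$-charts because the relative charts satisfy $z_+z_-=q$. The central bordered fiber is a bordered subsurface of the normalization. Glue standard discs $\{|z_\pm|\le\rho\}$ to these boundary circles along the fixed coordinates; the glue maps depend holomorphically on $(q,s)$. Renaming the outer parameter $r$ produces $C^{\rm cap}_{r,s}$ with marked cap centers $z_\pm=0$ and product cap coordinates. For properness and holomorphic dependence I would use the Grushevsky--Krichever--Norton jump construction \cite{GKN19}: the capped fiber is presented as a fixed-component curve with a holomorphically varying Cauchy-kernel jump on the seam circles. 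Then plumb the caps with $z_+z_-=p$. At $p=r=q$, the annulus $\{|q|/\rho\le|z_+|\le\rho\}$ is exactly the original neck and the exterior is the original bordered fiber, so the two curves are biholomorphic with marking and coordinates preserved. The marking is locally constant outside the charts, so the $A$-cycles are either hot cycles or the vanishing cycle, and the latter is preserved.

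\textbf{Step 2: sewing in $p$.} For fixed $(r,s)$, the Yamada expansions with the degree-one terms of \cite[Corollary~5]{Yam80} give the expansion in $p$. The nonseparating case reads $B^{\rm sew}_{p,r,s}=B^{\rm cap}(r,s)-p\,(b_{+,r}\otimes b_{-,r}+b_{-,r}\otimes b_{+,r})+p^2\widetilde R$. In the separating case the same-component correction vanishes at order $p$ and the cross term is $-p\,b_{1,r}\otimes b_{2,r}$. These expansions are valid on hot products, including the circles $|z_\pm|=\rho'$ with $\rho'$ between the nested radii. I need $\widetilde R$ to be jointly holomorphic in $(p,r,s)$ with locally uniform derivative bounds. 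This is the main obstacle, because Yamada states the expansion fiberwise. I would first try the holomorphic $A$-normalized version of \cite[Section~3]{HN20}, where $B^{\rm sew}$ is the solution of a jump problem $(I-pT_{r,s})^{-1}$. Here $T_{r,s}$ is a compact integral operator on the seam circles, built from $B^{\rm cap}(r,s)$ and depending holomorphically on $(r,s)$. A Neumann series then converges uniformly for $|p|$ small, which gives joint holomorphy. Cauchy estimates on slightly larger polydiscs give the derivative bounds. The diagonal principal part is common to all terms and is subtracted first.

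\textbf{Step 3: diagonal restriction.} Set $p=r=q$ and Taylor-expand in $r$: $B^{\rm cap}(q,s)=B_0+qD^{\rm cap}+q^2(\cdot)$ and $b_{\pm,q}=b_{\pm,0}+O(q)$, both holomorphic. Collecting terms yields \eqref{eq:yamada-nonsep}, \eqref{eq:yamada-sep-same} and \eqref{eq:yamada-sep-cross}. Each remainder is a holomorphic function of $(q,s)$ and fiber points, as a composite of jointly holomorphic pieces, so it has no logarithm. The required bounds follow by shrinking to smaller compact products.
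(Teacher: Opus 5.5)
Your proposal follows essentially the same route as the paper. You cap off the neck in the fixed plumbing coordinates to get a smooth proper outer family in $r$, plumb the cap centers with an independent parameter $p$, and apply the Yamada sewing formulas in the Hu--Norton fixed-kernel Neumann-series form with $(r,s)$ as parameters to get joint holomorphy; you then restrict to $p=r=q$ and Taylor-expand $B^{\rm cap}$ and $b_\pm$ in $r$.

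One attribution should be fixed. Properness of $C^{\rm cap}_{r,s}$ follows directly from compact fibers and a finite atlas. Holomorphic dependence of $B^{\rm cap}(r,s)$ follows, as in the paper, from inverting the $A$-period matrix of a holomorphic family of second-kind bidifferentials. The GKN jump construction supplies neither; it enters only in the sewing step.
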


\begin{proof}
Choose radii
\[
0<\rho_{\rm in}<\rho_{\rm out}
\]
inside the two fixed relative node charts. Outside the smaller node tube, the total space of $C_{q,s}$ is a holomorphic submersion because the node is the only singularity. Near each boundary collar retain the plumbing coordinate $z_+$ or $z_-$. Glue in a product disc over the full parameter base by the identity in that plumbing coordinate on
\[
\rho_{\rm in}<|z_\pm|<\rho_{\rm out}.
\]
The transition maps are holomorphic in the fiber coordinate and in $(q,s)$. The resulting fibers are compact, and a fixed finite atlas shows that the glued total space is a proper holomorphic submersion. Rename its transverse parameter $r$. At $r=0$, replacing the nodal neck by two cap discs gives the normalization. Cutting a nonseparating neck leaves one connected outer piece, whereas cutting a separating neck leaves two.

Keep $r$ as the outer-family parameter and introduce an independent plumbing parameter $p$. Remove smaller discs around the two cap centers and identify their annuli by
\[
z_+z_-=p.
\]
This defines $C^{\rm sew}_{p,r,s}$. When $p=r=q$, its exterior charts are the exterior charts of $C_{q,s}$, while its neck transition is the original relation $z_+z_-=q$. The identity maps on the exterior and neck agree on both overlaps. They assemble to a biholomorphism
\[
C^{\rm sew}_{q,q,s}\cong C_{q,s}.
\]
All inherited cycles lie in the exterior and the plumbing coordinates are unchanged, so the conifold-adapted marking is preserved.

On the proper smooth capped family, choose a holomorphic frame of the relative Hodge bundle and a holomorphic family of relative bidifferentials of the second kind. $A$-normalization is a finite-dimensional correction obtained by inverting the matrix of $A$-periods. This matrix is invertible on the central fiber and remains invertible after shrinking. Hence $B^{\rm cap}(r,s)$, its node evaluations, and all their fixed-coordinate derivatives depend jointly holomorphically on $(r,s)$. Cauchy estimates on nested cap discs and compact parameter subsets make the node-germ bounds uniform.

Apply the convergent sewing construction of
\cite[Theorems~2, 4, and~6]{Yam80}, in the fixed-kernel jump-problem form of \cite[Section~3]{HN20}, with $p$ as the plumbing parameter and $(r,s)$ as parameters; write $b_\pm(r,s)$ for the node-evaluation one-forms of $B^{\rm cap}(r,s)$, defined as above with $B^{\rm cap}(r,s)$ in place of $B_0$. In the nonseparating case the construction gives
\[
\begin{aligned}
B^{\rm sew}
={}&
B^{\rm cap}(r,s)\\
&-p\bigl[
b_+(r,s)\otimes b_-(r,s)
+b_-(r,s)\otimes b_+(r,s)
\bigr]
+p^2\widetilde R_{\rm ns},
\end{aligned}
\]
where the remainder is jointly holomorphic and uniformly bounded. In the separating case it gives a same-component remainder divisible by $p^2$ relative to the capped component kernels and the cross-component expansion
\[
B^{\rm sew}
=
-p\,b_1(r,s)\otimes b_2(r,s)
+p^2\widetilde R_{12}.
\]
The convergent construction applies on fixed plumbing circles and to all finite derivatives, with normal convergence on a parameter neighborhood.

Restrict these identities to $p=r=q$ and use the exact biholomorphism above. Holomorphy gives
\[
B^{\rm cap}(q,s)
=
B_0(s)+qD^{\rm cap}(s)+q^2\widetilde R,
\]
and
\[
b_\pm(q,s)=b_{\pm,0}(s)+O(q),
\]
with analogous component formulas. Substitution gives
\eqref{eq:yamada-nonsep}--\eqref{eq:yamada-sep-cross}. In general $D_j^{\rm cap}$ is a genuine order-$q$ term, so the transverse variation of the capped outer family enters the expansion at order $q$.

A holomorphic function of $(p,r,s)$ remains holomorphic after diagonal restriction, and multivariable Cauchy estimates remain uniform on smaller compact subsets. Normal convergence gives only nonnegative integral powers of $q$. The $dz_+/z_+$ Laurent mode is excluded by the core $A$-period in the nonseparating case and by the residue theorem on each separating component. Thus no logarithm occurs, and the plumbing-circle estimate is $O(1)$.

\end{proof}

\begin{rem}\label{rem:weaker-sewing}
The weaker consequences
\[
B_q-B_0=O(q)
\]
in the nonseparating case and
\[
B_q-B_{j,0}=O(q)
\]
on separating same-component products hold for an arbitrary transverse family. A pure-sewing $O(q^2)$ same-component statement is valid after comparison with $B^{\rm cap}(q,s)$; against $B_{j,0}$, the transverse derivative $D_j^{\rm cap}$ can contribute at order $q$.
\end{rem}

We now place the toric one-node family in this setting. Since
$\lambda_q(0,s_0)\neq0$, we replace the original transverse coordinate by
$\lambda(q,s)$ and, from now on, denote this normal-form coordinate again by
$q$. The plumbing relation is then $z_+z_-=q$, and on the quadratic cover
\[
q=\lambda=\eps^2,\qquad
t=q\,c_{\rm neck}(q,s),
\]
with $c_{\rm neck}$ holomorphic and bounded away from zero. In particular
$O(q)=O(t)=O(\eps^2)$ and $O(\eps)=O(|t|^{1/2})$, uniformly.

\begin{cor}[The toric one-node family]\label{cor:toric-sewing}
In the normal-form coordinate $q$, the toric one-node family of
Propositions~\ref{thm:generic-framing}, \ref{thm:normal-form}, and
\ref{prop:regime-atlas} satisfies the hypotheses of
Theorem~\ref{thm:actual-yamada}. The expansions
\eqref{eq:yamada-nonsep}--\eqref{eq:yamada-sep-cross} hold with estimates
locally uniform in $s$, the expansions are holomorphic on a full $t$-disc,
and every $O(q)$ error is $O(t)$, equivalently $O(\eps^2)$.
\end{cor}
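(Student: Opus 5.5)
The plan is to check the hypotheses of Theorem~\ref{thm:actual-yamada} one at a time for the toric family. Then I would transfer its conclusions from the sewing parameter to the coordinates $q=\lambda=\eps^2$ and $t$.

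\emph{Compact family with one node.} The compactified mirror curves $C_{q,s}\subset S$ form a proper family, since $S$ is a compact toric surface. Proposition~\ref{thm:generic-framing} already provides the needed properties: the vertex coefficients are nonzero, the puncture sections are pairwise disjoint, and the central fiber is smooth away from the one ordinary node. After shrinking, the family is therefore smooth for $q\neq0$ and the node is the only nearby singularity. The node moves holomorphically with $s$, because the implicit-function construction of $c(q,s)$ in Proposition~\ref{thm:normal-form} does, and the family is otherwise smooth.

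\emph{Plumbing relation.} Proposition~\ref{thm:normal-form} puts the curve in the form $v^2=u^2-\lambda$ with holomorphic coordinates $(u,v)$. Setting $z_\pm=u\pm v$, these give fixed relative node charts in which $z_+z_-=\lambda$. This is exactly the relation $z_+z_-=q$ once $\lambda$ is renamed $q$. The renaming is legitimate because $\lambda_q(0,s_0)\neq0$, so $(\lambda,s)$ is a local coordinate system on the base.

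\emph{Marking.} The conifold-adapted marking is locally constant outside the node charts by the familywise condition imposed before Proposition~\ref{prop:regime-atlas}. In the nonseparating case, the $B$-cycle dual to $\gamma$ does cross the neck. Since the normalized bidifferential depends only on the $A$-cycles, this causes no difficulty: Proposition~\ref{prop:regime-atlas}(6) shows that the $A$-polarization is preserved under monodromy. Consequently $B_q$ is single-valued on the full punctured $q$-disc, with no quadratic-cover ambiguity.

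With all hypotheses in place, Theorem~\ref{thm:actual-yamada} yields the expansions \eqref{eq:yamada-nonsep}--\eqref{eq:yamada-sep-cross}. Their remainders are jointly holomorphic in $q$ and $s$ on a full $q$-disc, including $q=0$.

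\emph{Conversion to $t$ and uniformity.} By Proposition~\ref{prop:regime-atlas}(3), $t=q\,c_{\rm neck}(q,s)$ with $c_{\rm neck}$ holomorphic and nonvanishing. The map $(q,s)\mapsto(t,s)$ is therefore a biholomorphism, which is also Proposition~\ref{thm:period-split}(2). Substituting $q=q(t,s)$ keeps the expansions holomorphic on a full $t$-disc. Because $|c_{\rm neck}|$ is bounded above and below, every $O(q)$ error is $O(t)$, and both are $O(\eps^2)$ since $q=\eps^2$. For uniformity in $s$, I would cover a compact subset of the spectator neighborhood by finitely many charts on which the constructions above are made. On each chart the Cauchy-estimate bounds from Theorem~\ref{thm:actual-yamada} hold, and the lower bounds of Proposition~\ref{thm:generic-framing} hold uniformly there.

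The step I expect to need the most care is the requirement in Theorem~\ref{thm:actual-yamada} that the node charts be \emph{relative} holomorphic charts of the total space in which the plumbing relation is exact. Two features must be checked. The coordinates $u,v$ depend holomorphically on $(q,s)$, as given by Proposition~\ref{thm:normal-form}. The sewing theorem uses the transverse parameter $q$ itself, not $\eps$. Since $v^2=u^2-\lambda$ is polynomial in $\lambda$ and no $\eps$ appears in the charts, this should hold, and the capped-family construction then goes through as in the proof of Theorem~\ref{thm:actual-yamada}.
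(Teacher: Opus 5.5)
Your proposal is correct and follows essentially the paper's route: you read off the proper one-node family, the moving node, and the exact relative plumbing charts $z_+z_-=\lambda$ from Propositions~\ref{thm:generic-framing} and~\ref{thm:normal-form}, rename $\lambda$ as $q$, and invert $t=q\,c_{\rm neck}(q,s)$ to turn $O(q)$ into $O(t)=O(\eps^2)$. The paper is more explicit only about the marking: the familywise condition fixes just $\gamma$ with its completed dual, or the componentwise limiting $A$-markings, so one completes it by choosing the remaining symplectic basis with representatives disjoint from the node charts, and this choice, rather than the familywise condition alone, makes the marking locally constant outside those charts.
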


\begin{proof}
For the marking hypothesis, complete the marking data of
Proposition~\ref{prop:regime-atlas} ($\gamma$ together with its completed
dual in the nonseparating case, the componentwise limiting $A$-markings in
the separating case) to a full conifold-adapted marking by choosing the
remaining symplectic basis with representatives disjoint from the node
charts; such representatives lie in the exterior, so the marking is locally
constant there. Propositions~\ref{thm:generic-framing} and
\ref{thm:normal-form} give the holomorphic toric family, unique moving
node, and fixed projection-compatible charts, and
Proposition~\ref{prop:regime-atlas} gives the conifold-adapted marking,
plumbing circles, $q=\eps^2$, and $t=q\,c_{\rm neck}(q,s)$. The holomorphic
inverse-function theorem gives $q=q(t,s)$ on a full $t$-disc. Substitution
preserves the uniform bounds and converts $O(q)$ into $O(t)=O(\eps^2)$.
\end{proof}

The scaled estimates below compare $B_q$ with the neck model. Regard the
Gaussian spectral curve $G$ as the rational bubble of the degeneration, with
uniformizing coordinate $\zeta$, and write
\[
B_G(\zeta_1,\zeta_2)
=
\frac{d\zeta_1\,d\zeta_2}{(\zeta_1-\zeta_2)^2}
\]
for its normalized bidifferential. Throughout, $K_{\rm scale}$ denotes a
compact subset of $G$ disjoint from $0$ and $\infty$, and $K_{\rm hot}$ a
fixed hot compact subset.

\begin{thm}[Uniform estimates for the degenerating bidifferential]\label{thm:bergman-package}
Work in the toric family, conifold-adapted marking, plumbing coordinates, and exact period of Corollary~\ref{cor:toric-sewing}. Locally uniformly in $s$, the conifold-adapted $A$-normalized bidifferential $B_q$ has the following two properties.

\begin{enumerate}[label=\textup{(\arabic*)}]
\item \emph{Fixed-scale estimates.} The expansions of
Theorem~\ref{thm:actual-yamada} hold on every fixed product of hot compact
subsets, including every product containing either fixed plumbing circle:
after subtraction of the common diagonal principal part when necessary,
$B_q$ is $O(1)$, its difference from the central componentwise normalized
bidifferential is $O(q)$ on same-component products, and separating
cross-component terms are $O(q)$. Derivatives of every fixed order of the
holomorphic remainders, in the fiber coordinates, $s$, and $q$, are
uniformly bounded on smaller compact subsets, and the expansion is
holomorphic in the single-valued product $q$ with no logarithm.
\item \emph{Scaled estimates.} For either square root $\eps$ of $q$,
\begin{equation}\label{eq:scaled-B}
(\Phi_\eps\times\Phi_\eps)^*B_q
=
B_G+O(\eps)
\end{equation}
on $K_{\rm scale}\times K_{\rm scale}$ after diagonal subtraction, and
\begin{equation}\label{eq:mixed-B}
(\id\times\Phi_\eps)^*B_q=O(\eps)
\end{equation}
on $K_{\rm hot}\times K_{\rm scale}$, together with the symmetric mixed estimate. These estimates hold with point derivatives of every fixed order on smaller compact subsets. The $d\zeta/\zeta$ Laurent mode vanishes in either variable.
\end{enumerate}

Both estimates hold on the full punctured $t$-disc. The capped outer
components may vary transversely with $q$ and $s$.
\end{thm}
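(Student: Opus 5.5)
Part (1) is essentially a restatement of Theorem~\ref{thm:actual-yamada} transported through Corollary~\ref{cor:toric-sewing}. I will read off the expansions \eqref{eq:yamada-nonsep}--\eqref{eq:yamada-sep-cross} on fixed hot products, including the plumbing circles, which Proposition~\ref{prop:regime-atlas}(2) identifies as hot compact subsets. The $O(1)$ bound, the $O(q)$ comparison with the central componentwise bidifferential and the $O(q)$ cross-component terms then come directly from the holomorphic remainders. Uniform derivative bounds follow from Cauchy estimates on smaller compacts. The absence of logarithms follows because the expansion is a convergent power series in the single-valued $q$, which is a holomorphic function of $t$ on a full disc by Corollary~\ref{cor:toric-sewing}. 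Local uniformity in $s$ is a finite-cover compactness argument.

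For part (2) I would use the plumbing coordinates $z_\pm=\eps\zeta^{\pm1}$ of Proposition~\ref{prop:regime-atlas}(1). On the closed cold annulus $A_{\rm cold}$, expand $B_q$ in the two variables as a Laurent series in $z_+$. The boundary data on $C_{\rm out}$ and $C_{\rm in}$ are controlled by part (1). Concretely, I would write $B_q(p_1,p_2)$ for $p_1,p_2$ in the annulus via the Cauchy formula on the cut annulus: the diagonal principal part $dz_{+,1}dz_{+,2}/(z_{+,1}-z_{+,2})^2$ plus boundary integrals over $C_{\rm out}$ and $C_{\rm in}$ of $B_q$ against Cauchy kernels. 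The $dz_+/z_+$ zero mode is excluded in each variable by the normalization, as in the last step of the proof of Theorem~\ref{thm:actual-yamada}: the $A$-period of $\gamma$ in the nonseparating case, the residue theorem on components in the separating case. After this exclusion, the remaining terms are holomorphic one-forms in $z_+$ on $|z_+|\le R$ plus forms in $z_-$ on $|z_-|\le R$, with bounds uniform in $\eps$.

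Pulling back by $\Phi_\eps$, the principal part becomes exactly $B_G$, since the substitution $z_+=\eps\zeta$ is Möbius in $\zeta$. The regular parts pick up factors $dz_+=\eps\,d\zeta$ and $dz_-=-\eps\zeta^{-2}d\zeta$ in each variable, so they are $O(\eps^2)$ in pure scaled products, hence $O(\eps)$. One subtlety: the principal part must be written in both the $z_+$ and $z_-$ charts, i.e.\ as the genus-zero kernel of the annulus in $\zeta$, and I would handle this with the two-seam idea mentioned in the introduction. For the mixed estimate \eqref{eq:mixed-B}, take the first point hot. Then $B_q(p,\cdot)$ restricted to the annulus is a holomorphic form without residue, bounded on both boundary circles by part (1). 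By the same Laurent argument it is (holomorphic in $z_+$)$\,dz_+$ plus (holomorphic in $z_-$)$\,dz_-$, and its pullback carries a factor $\eps$. Derivative bounds follow from Cauchy estimates on $K_{\rm scale}$, which is compact away from $0$ and $\infty$.

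The main obstacle I expect is the scaled estimate \eqref{eq:scaled-B}. One has to show that the annular Laurent decomposition of $B_q$ has no large negative modes beyond the Gaussian double pole, uniformly as the annulus modulus degenerates, and that the zero mode genuinely vanishes in both variables simultaneously. I would first try the maximum principle on the annulus for $B_q$ minus the $\zeta$-model kernel. Its boundary values on $C_{\rm out}$ and $C_{\rm in}$ are $O(1)$ by part (1), while the model kernel's boundary values are $O(\eps^2)$. Estimating the Laurent coefficients by integrals over these circles then gives the required decay of each mode when rescaled to $\zeta$.
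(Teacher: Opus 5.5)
Your proposal is correct. For part (2), however, it takes a different route from the paper.

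The paper does not derive the scaled estimates from part (1). It inserts the Gaussian bubble through the two seams $z_+\zeta_\infty=\eps$, $z_-\zeta=\eps$. It then solves a fixed-kernel jump problem on the capped outer components together with the bubble: a seam jump operator of norm $O(|\eps|)$ on a fixed Banach space of seam germs, inverted by a Neumann series. The solution is identified with $(\Phi_\eps\times\id)^*B_q(a,\cdot)$ by uniqueness of the $A$-normalized kernel, and the $O(\eps)$ bounds are read off from the factor $\eps$ produced by each plumbing inversion.

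You instead use part (1) as boundary data on the degenerating annulus. You subtract the model kernel $dz_{+,1}dz_{+,2}/(z_{+,1}-z_{+,2})^2$ and expand the holomorphic remainder as a double Laurent series on the product of annuli. You remove the $dz_+/z_+$ modes by the $A$-normalization (nonseparating case) or the residue theorem (separating case). The four sign blocks are then bounded by Cauchy estimates on the four boundary tori $C\times C'$, $C,C'\in\{C_{\rm out},C_{\rm in}\}$; the inner blocks are most conveniently handled in the $z_-$ frame, where the boundary bound is $O(1)$.

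This is shorter and more elementary, and it even gives $O(\eps^2)=O(t)$ for the scaled--scaled comparison. The cost is that all the analysis rests on the fixed-scale sewing bounds of Theorem~\ref{thm:actual-yamada} on $C_{\rm in}\times C_{\rm in}$ and $C_{\rm out}\times C_{\rm in}$, in the respective plumbing coordinates and uniformly in the moving family. The paper's construction instead produces the bubble-side estimates directly from the capped-family data.

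Two points need to be stated precisely when you write it out.
\begin{itemize}
\item The model kernel is $O(\eps^2)$ only on the mixed tori, in the frame $dz_+\,dz_-$. On $C_{\rm out}\times C_{\rm out}$ and $C_{\rm in}\times C_{\rm in}$ it is the diagonal principal part. There you need the exact M\"obius invariance $dz_{+,1}dz_{+,2}/(z_{+,1}-z_{+,2})^2=dz_{-,1}dz_{-,2}/(z_{-,1}-z_{-,2})^2$ under $z_-=\eps^2/z_+$, so that it matches the subtraction made in part (1) on the inner circle.
\item Part (1) gives no bound when one argument is scaled and the other lies on a plumbing circle. So the Cauchy step must be run in both variables at once, not in one variable with the other held in $K_{\rm scale}$. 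Alternatively, prove the mixed estimate first, shrinking the plumbing radius below the distance from $K_{\rm hot}$ to the node preimages.
\end{itemize}
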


\begin{proof}
Part (1) is Corollary~\ref{cor:toric-sewing}.

\emph{Step 1: the seam jump operator.} For the scaled estimates, retain the capped outer components constructed in the proof of Theorem~\ref{thm:actual-yamada}. On the cover $q=\eps^2$, insert the fixed rational bubble and use
\[
z_+\zeta_\infty=\eps,\qquad z_-\zeta=\eps,\qquad
\zeta_\infty=\zeta^{-1}.
\]
Eliminating $\zeta$ recovers $z_+z_-=q$, and the capped outer components retain their transverse variation. The convergent sewing construction of
\cite[Theorems~2 and~6]{Yam80}, in the fixed-kernel jump-problem form of
\cite[Lemma~3.2 and Theorem~3.3]{HN20}, adapted to the present moving
family, gives the operator estimate below.

Index the attachment annuli by oriented half-edges: each seam has two
sides, the coordinate on side $e$ is $z_e$, and $-e$ denotes the opposite
side. Choose attachment-coordinate discs $|z_e|<\rho_{\rm att}$ uniformly in the
capped outer family. Choose a compact neighborhood $K^+_{\rm scale}$ of
$K_{\rm scale}$ inside $G\setminus\{0,\infty\}$, and then fix a
radius
\[
0<2\rho<\rho_{\rm att}
\]
so small that the closed discs $|z_e|\leq2\rho$ are pairwise disjoint and the
two bubble discs $|\zeta|\leq2\rho$ and $|\zeta_\infty|\leq2\rho$ are
disjoint from $K^+_{\rm scale}$; after shrinking $|q|$ and the spectator
neighborhood, these choices hold simultaneously throughout the family.
Cauchy estimates are taken on the $2\rho$-discs, and every seam one-form
germ is measured on the $\rho$-discs. Thus, for
\[
j_e(z_e)=\sum_{\ell\geq0}c_{\ell,e}z_e^\ell\,dz_e,
\qquad
\|j\|_{\rho}
=
\max_e\sum_{\ell\geq0}|c_{\ell,e}|\rho^{\ell},
\]
let $\mathcal J_\rho$ be the corresponding Banach space of seam one-form germs.
Denote the capped outer components by $X_\mu(q,s)$, and adjoin the bubble
to the component index set by putting $X_G=G$; from here on, the index $\mu$
ranges over the capped outer components together with $G$. On each outer
$X_\mu(q,s)$, let $B_\mu(q,s)$ be the restriction to $X_\mu(q,s)$ of the
componentwise $A$-normalized bidifferential $B^{\rm cap}$; on the bubble,
take $B_G$. Choose
a base section $o_\mu$ away from the attachment discs and, on a simply
connected neighborhood of an attachment point in the second variable, set
\[
K_{\mu;q,s}(x,w)
=
\frac{1}{2\pi i}\int_{o_\mu}^{w}B_\mu(q,s)(x,\cdot).
\]
This is a differential in $x$ with vanishing inherited $A$-periods; besides
its Cauchy pole at $x=w$, its only auxiliary pole is at $x=o_\mu$. These
data, with the integration path chosen to vary continuously in $(q,s)$,
depend holomorphically on $(q,s)$, with bounds locally uniform in $s$; for $\mu=G$ the kernel $K_{G;q,s}$ is explicit and
independent of $(q,s)$. For a half-edge $e$, let $\mu(e)$ be its
incident component, let $E_\mu$ be the set of half-edges incident to the
component indexed by $\mu$, and let $\iota_e$ be the plumbing inversion from the $e$-side to the
opposite side,
\[
\iota_e\colon z_e\longmapsto z_{-e}=\frac{\eps}{z_e}.
\]
The regular part of $K_{\mu;q,s}$ at two attachment half-edges
$e,e'\in E_\mu$ has the expansion
\[
K^{\rm reg}_{e,e'}(z_e,w_{e'};q,s)
=
\sum_{m,n\geq0}
k_{m,n;e,e'}(q,s)z_e^mw_{e'}^n\,dz_e.
\]
The capped outer family is holomorphic in $(q,s)$. Cauchy estimates on the $2\rho$-discs therefore bound these coefficients locally uniformly in $s$. Plumbing inversion gives
\[
z_{-e'}=\frac{\eps}{w_{e'}},
\qquad
\iota_{e'}^*j_{-e'}
=
-\sum_{\ell\geq0}
c_{\ell,-e'}\eps^{\ell+1}w_{e'}^{-\ell-2}\,dw_{e'}.
\]
Residue extraction against the regular kernel consequently gives
\[
(T_{\eps,q,s}j)_e
=
-2\pi i
\sum_{\substack{e'\in E_{\mu(e)}\\ m,\ell\geq0}}
k_{m,\ell+1;e,e'}(q,s)c_{\ell,-e'}
\eps^{\ell+1}z_e^m\,dz_e.
\]
Every term contains a factor $\eps$; we call $T_{\eps,q,s}$ the jump
operator. Shrink $|q|$ so that $|\eps|\leq\rho^2$. Cauchy estimates on
the $2\rho$-discs then give one constant $C_J$, uniform for $s$ in the
chosen compact set, such that
\[
T_{\eps,q,s}\colon
\mathcal J_{\rho}\longrightarrow\mathcal J_{\rho},
\qquad
\|T_{\eps,q,s}j\|_{\rho}
\leq C_J|\eps|\,\|j\|_{\rho}.
\]
The space $\mathcal J_\rho$ does not vary with $(\eps,q,s)$. After
shrinking so that $C_J|\eps|<1/2$, we may therefore invert
$I-T_{\eps,q,s}$ by the Neumann series
\[
(I-T_{\eps,q,s})^{-1}
=
\sum_{k\geq0}T_{\eps,q,s}^k,
\qquad
\|(I-T_{\eps,q,s})^{-1}\|\leq2.
\]
The series converges normally on the $\rho$-discs, jointly in the moving
point, $\eps$, and $s$.

\emph{Step 2: construction and characterization.} Begin with the initial differential $S_G^a=B_G(a,\cdot)$ on the bubble and $S_\mu^a=0$ on every outer component. Let $j_e^{(0)}$ be its germ on the side indexed by $e$; since the closed bubble discs avoid $K^+_{\rm scale}$, these initial germs lie in $\mathcal J_\rho$ with norms uniformly bounded for $a\in K_{\rm scale}$. Write $\Gamma_e=\{|w_e|=\rho\}$, positively oriented in the coordinate $w_e$. Define $j^{(k)}=T_{\eps,q,s}j^{(k-1)}$, and define the $k$-th correction and the full correction of the component indexed by $\mu$ by
\[
C_\mu^{(k)}(x)
=
\sum_{e\in E_\mu}
\int_{\Gamma_e}
K_{\mu;q,s}(x,w_e)\,
\iota_e^*j_{-e}^{(k-1)}(w_e),
\qquad
C_\mu=\sum_{k\geq1}C_\mu^{(k)}.
\]
The auxiliary base-point pole of $K_{\mu;q,s}$ enters with coefficient
proportional to $\int_{\Gamma_e}\iota_e^*j_{-e}^{(k-1)}$; substituting
$z_{-e}=\eps/w_e$ turns this into an integral of the holomorphic germ
$j_{-e}^{(k-1)}$ over the circle $|z_{-e}|=|\eps|/\rho$, which vanishes. Hence every $C_\mu^{(k)}$ is holomorphic on its capped component away from the seams.

On the $e$-side of a seam, for points in the retained annulus outside $\Gamma_e$, the Cauchy integral formula on the seam circle and the definition of the regular part give
\[
\begin{aligned}
C_{\mu(e)}^{(k)}
&=\iota_e^*j_{-e}^{(k-1)}+j_e^{(k)},\\
C_{\mu(e)}^{(k)}-\iota_e^*C_{\mu(-e)}^{(k)}
&=
\bigl(j_e^{(k)}-\iota_e^*j_{-e}^{(k)}\bigr)
-
\bigl(j_e^{(k-1)}-\iota_e^*j_{-e}^{(k-1)}\bigr).
\end{aligned}
\]
Summing in $k$ telescopes:
\[
C_{\mu(e)}-\iota_e^*C_{\mu(-e)}
=
-\bigl(j_e^{(0)}-\iota_e^*j_{-e}^{(0)}\bigr).
\]
Thus the componentwise forms
$\Omega_{\eps,a}|_{X_\mu}=S_\mu^a+C_\mu$ have exactly zero seam mismatch and glue to a global meromorphic differential.

All correction integrals are supported on seams, and every seam lies a fixed positive distance from $K^+_{\rm scale}$, hence from the moving point $a\in K_{\rm scale}$. Consequently $C_G$ is holomorphic at the moving point, and $\Omega_{\eps,a}$ retains exactly the double-pole principal part $B_G(a,\cdot)$ there, with zero residue. It has no other pole.

Each component Cauchy kernel has zero period in its first variable over every inherited component $A$-cycle, so every correction has zero inherited $A$-period. Near a seam, the Cauchy integral formula expresses each correction as a sum of holomorphic germs. Its period around the seam circle therefore vanishes, and the initial differential contributes no seam period. In the nonseparating case, the inherited component $A$-cycles together with the neck core span the $A$-polarization of the conifold-adapted marking. In the separating case, the inherited componentwise $A$-cycles already span it. Hence $\Omega_{\eps,a}$ is normalized in precisely the $A$-polarization of the conifold-adapted marking of Proposition~\ref{prop:regime-atlas}.

Two global meromorphic differentials with this principal part and these vanishing $A$-periods differ by a holomorphic differential with every $A$-period zero. Expansion in an $A$-normalized basis forces that difference to vanish. Since $(\Phi_\eps\times\id)^*B_q(a,\cdot)$ has the same principal part and the same vanishing adapted $A$-periods, uniqueness gives
\[
\Omega_{\eps,a}
=
(\Phi_\eps\times\id)^*B_q(a,\cdot).
\]

\emph{Step 3: estimates and equivariance.} The constant seam term of a component Cauchy kernel integrates to zero against a holomorphic seam germ. The remaining residue terms contain at least one factor $\eps$, and the Neumann-series estimate therefore gives $C_G=O(\eps)$ on fixed scaled-neck compact subsets after diagonal subtraction. A correction reaching a fixed hot compact subset from the bubble acquires at least one factor of $\eps$ from plumbing inversion and is $O(\eps)$. Pulling the second variable back by $\Phi_\eps$, this proves \eqref{eq:scaled-B} and \eqref{eq:mixed-B}; symmetry gives the other mixed order. Cauchy estimates give the same bounds for point derivatives of every fixed order on smaller compact subsets. All constants come from the component bidifferentials of the capped outer family and from $B_G$, and the bounds above keep them locally uniform in $s$.

The only possible scale-independent Laurent mode is a multiple of $d\zeta/\zeta$. In the nonseparating case its coefficient is the period over the core $A$-cycle and vanishes by normalization. In the separating case it would have a nonzero residue at a cap center, and the residue theorem forces its coefficient to vanish. Hence the coefficient of $d\zeta/\zeta$ vanishes in either variable.

Changing $(\eps,\zeta)$ to $(-\eps,-\zeta)$, and with it the moving point $a$ to $-a$, preserves $u=\eps z$, $v=\eps w$, the seams, and the family. It exchanges the labels of the cold points and leaves the normalized solution invariant. The exact-period conversions follow from Proposition~\ref{prop:regime-atlas}.
\end{proof}

The recursion kernel \eqref{eq:EO-kernel} depends on the bidifferential
through its primitive along deck-conjugate arcs. The scaled estimates
integrate to the following comparison.

\begin{cor}[Scaled comparison of Bergman primitives]\label{cor:primitive-comparison}
For $\zeta$ in a compact subset of $G\setminus\{0,\infty\}$ covered by
simply connected neighborhoods of the two ramification points $\zeta=\pm1$,
choose a path $\lambda_\zeta$ from $\zeta^{-1}$ to $\zeta$, varying
continuously within such a neighborhood and contained in one fixed compact
subset of $G\setminus\{0,\infty\}$, and put
$J_q(p,\zeta)=\int_{\Phi_\eps(\lambda_\zeta)}B_q(p,\cdot)$. Then,
locally uniformly in $s$ and on the full punctured $t$-disc,
\[
\Phi_\eps^*J_q(a,\zeta)
=
\int_{\zeta^{-1}}^{\zeta}B_G(a,\cdot)+O(\eps)
\]
for $a$ in any fixed compact subset of $G\setminus\{0,\infty\}$ disjoint
from every $\lambda_\zeta$, while $J_q(p,\zeta)=O(\eps)$ for $p$ in a
fixed hot compact subset. The same bounds hold for point derivatives of
every fixed order on smaller compact subsets. The primitive is unchanged under
homotopies of $\lambda_\zeta$ within the chosen neighborhood and when a
path picks up the core cycle $\gamma$; it is odd under
$\zeta\mapsto\zeta^{-1}$ and vanishes at $\zeta=\pm1$.
\end{cor}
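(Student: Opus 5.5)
We integrate the two scaled estimates of Theorem~\ref{thm:bergman-package} along the pulled-back path. Since $\Phi_\eps$ is a biholomorphism onto its image, pulling back along it gives
\[
\Phi_\eps^*J_q(p,\zeta)=\int_{\lambda_\zeta}(\id\times\Phi_\eps)^*B_q(p,\cdot).
\]
By hypothesis, every $\lambda_\zeta$ lies in one fixed compact subset $K\subset G\setminus\{0,\infty\}$.

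For $p=\Phi_\eps(a)$ with $a$ in a compact set disjoint from all $\lambda_\zeta$, the pair $(a,\cdot)$ stays a positive distance from the diagonal along the path. So no diagonal subtraction is needed, and \eqref{eq:scaled-B} on $K'\times K$ gives an integrand $B_G(a,\cdot)+O(\eps)$. We may choose lengths uniformly bounded, e.g.\ by taking $\lambda_\zeta$ inside fixed discs around $\pm1$ plus fixed connecting arcs. Then the integral is $\int_{\zeta^{-1}}^{\zeta}B_G(a,\cdot)+O(\eps)$, uniformly in $s$ and on the full punctured $t$-disc, where the path is irrelevant for $B_G$ on the rational curve. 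For $p$ in a fixed hot compact subset, \eqref{eq:mixed-B} bounds the integrand by $O(\eps)$, hence $J_q(p,\zeta)=O(\eps)$.

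Derivative bounds follow in two ways. In the moving endpoint $\zeta$, the derivative is just the integrand, already controlled. In $p$ or $a$ and in $s$, we differentiate under the integral and use the derivative versions of the scaled estimates on slightly larger compact sets.

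Next come the invariance properties. A homotopy of $\lambda_\zeta$ with fixed endpoints inside the simply connected neighborhood does not change the integral, because $B_q(p,\cdot)$ is holomorphic there (the pole at $p$ being avoided). Picking up $\gamma$ adds $\oint_\gamma B_q(p,\cdot)$. In the nonseparating case this vanishes because $\gamma$ is the first $A$-cycle of the conifold-adapted marking and $B_q$ is $A$-normalized. In the separating case $\gamma$ bounds a component, and the integral vanishes by the residue theorem, since $B_q(p,\cdot)$ has zero residue at $p$; equivalently, the $d\zeta/\zeta$ mode vanishes as stated in Theorem~\ref{thm:bergman-package}. Replacing $\zeta$ by $\zeta^{-1}$ reverses the endpoints, so the primitive is odd. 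At $\zeta=\pm1$ the endpoints coincide, and the path can be taken constant within the neighborhood, so the primitive vanishes there.

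The main delicate point is making the uniformity honest. We need one compact set carrying all paths, a uniform length bound, and the fact that the $O(\eps)$ constants of Theorem~\ref{thm:bergman-package} are uniform on that set. We also need the $a$-set kept off $K$, so that no principal part enters the integrand and the comparison $B_q\approx B_G$ holds without diagonal subtraction along the whole path.
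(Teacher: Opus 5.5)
Your proposal is correct and follows the paper's proof essentially step for step. Like the paper, you integrate \eqref{eq:scaled-B} and \eqref{eq:mixed-B} of Theorem~\ref{thm:bergman-package} along uniformly bounded paths kept off the moving point, and you obtain homotopy invariance, the vanishing $\gamma$-period, oddness, and vanishing at $\zeta=\pm1$ from $A$-normalization, the residue theorem and path reversal. There are two small differences, neither a gap:
\begin{itemize}
\item The paper justifies homotopy invariance by the zero residue of the double pole. This also covers homotopies inside the neighborhood that sweep across $p$, which your phrase ``the pole at $p$ being avoided'' does not. Your own later zero-residue remark closes this.
\item The paper gets the full punctured $t$-disc from equivariance under $(\eps,\zeta,a)\mapsto(-\eps,-\zeta,-a)$. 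You instead inherit it from the ``either square root'' clause of the theorem, which is equally valid.
\end{itemize}
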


\begin{proof}
Integrate \eqref{eq:scaled-B} and \eqref{eq:mixed-B} in the second
variable along $\Phi_\eps(\lambda_\zeta)$; the paths have uniformly
bounded length in one fixed compact set, and the moving point stays a fixed
positive distance from them, so the estimates and their derivative bounds
pass through the integral. Direct evaluation gives
\[
\int_{\zeta^{-1}}^{\zeta}B_G(a,\cdot)
=
da\left(\frac{1}{a-\zeta}-\frac{1}{a-\zeta^{-1}}\right),
\]
which is odd under $\zeta\mapsto\zeta^{-1}$ and vanishes at
$\zeta=\pm1$. Two homotopic paths give the same integral because $B_q$ is
holomorphic in the integration variable away from its double pole and has
zero residue there. If a permissible deformation adds the core cycle
$\gamma$, the difference is the $\gamma$-period of $B_q$ in its second
variable, which vanishes by $A$-normalization in the nonseparating case and
by the residue theorem on a separating component. Oddness and the exact
vanishing at $\zeta=\pm1$ follow by reversing the path. Equivariance under
$(\eps,\zeta,a)\mapsto(-\eps,-\zeta,-a)$ carries the estimate to the
full punctured $t$-disc.
\end{proof}

From Section~\ref{sec:boundedness} on, we write $B_t=B_{q(t,s),s}$ for the same bidifferential, regarded as a function of the exact period $t$.

\subsection{Uniform boundedness of stable correlators}\label{sec:boundedness}

We prove that every stable correlator is bounded on hot compact subsets and on
the two plumbing circles, by an induction that peels one nested contour per
recursion step and estimates the two cold residues as a pair on an annular
boundary. Throughout, we call
\[
k=2g-2+n\geq1
\]
the \emph{complexity} of the stable type $(g,n)$. The induction is stated for
an abstract family; for its data we display the spectator dependence and write
$B_{t,s}$, so that in the toric application $B_{t,s}=B_t$ in the notation of
Section~\ref{sec:bergman}.

Consider a holomorphic family
\[
(C_{t,s},x_{t,s},\omega_{0,1;t,s},B_{t,s})
\]
of regular spectral curves for $0<|t|<\delta$, with $s$ in a compact subset of
a parameter neighborhood, whose ramification points are all simple and consist
of a fixed finite collection of hot ramification sections together with
exactly two cold points
\[
a_+(t,s),\qquad a_-(t,s)
\]
in a one-neck plumbing annulus. On the cover
\[
t=\eps^2c_{\rm neck}(\eps^2,s),
\]
use the plumbing coordinates of Proposition~\ref{prop:regime-atlas},
\[
z_+z_-=\eps^2,
\]
and the deck involution $\sigma$ interchanging the two coordinates. For a
plumbing radius $r>0$ write
\[
C_{\rm out}(r)=\{|z_+|=r\},
\qquad
C_{\rm in}(r)=\{|z_-|=r\},
\]
both counterclockwise in their plumbing coordinates, so the boundary of the
annulus between them is $C_{\rm out}(r)+C_{\rm in}(r)$. On the cold plumbing
annulus put
\[
u=\frac{z_++z_-}{2},
\qquad
v=\frac{z_+-z_-}{2},
\]
inverting the plumbing coordinates $z_\pm=u\pm v$, and write
\[
D_{\rm odd}
=
\omega_{0,1;t,s}-\sigma^*\omega_{0,1;t,s}
\]
for the deck-odd part of the initial one-form. The cross-sheet primitive for the deck involution $\sigma$, in the sense of
\eqref{eq:EO-kernel}, and the cold kernel are
\[
J(p,q)
=
\frac12\int_{\sigma(q)}^qB_{t,s}(p,\cdot),
\qquad
K_{\rm cold}(p,q)=\frac{J(p,q)}{D_{\rm odd}(q)}.
\]

Fix an integer $D\geq1$ and radii
\[
R_0>R_1>\cdots>R_D>0
\]
inside the fixed plumbing charts. Let $H_{\rm base}$ be a finite union of
hot compact subsets for the external arguments, disjoint from the ramification
points, punctures, cold plumbing annulus, and unstable bidifferential
diagonals. Around each hot ramification section $a$, choose a coordinate
$\xi_a$ with $\sigma_a(\xi_a)=-\xi_a$ and nested fixed circles
\[
\Gamma_{a,j}=\{|\xi_a|=r_{a,j}\},
\qquad
r_{a,0}>r_{a,1}>\cdots>r_{a,D}>0.
\]
For distinct hot sections, the outermost closed coordinate discs are pairwise
disjoint. Each outermost disc is disjoint from $H_{\rm base}$ and from the
closed cold plumbing annulus, and for each fixed hot section the discs bounded
by the inner circles are nested inside the outermost disc. Put
\[
H_j
=
H_{\rm base}\cup
\bigcup_a\bigcup_{i=0}^{j}\Gamma_{a,i},
\qquad
E_{\ell,j}
=
H_j\cup
\bigcup_{i=0}^{\ell}
\bigl(C_{\rm out}(R_i)\cup C_{\rm in}(R_i)\bigr).
\]
The circles are chosen so that each closed coordinate disc bounded by one of
the $\Gamma_{a,j}$ contains no ramification point other than its center and no
puncture, the local odd denominator
$\omega_{0,1;t,s}-\sigma_a^*\omega_{0,1;t,s}$ has no zero on it except at
$q=a$, and, for every hot section $a$ and all $\ell,j\geq0$ with $\ell+j<D$,
the closed disc bounded by $\Gamma_{a,j+1}$ is disjoint from $E_{\ell,j}$.

\begin{thm}[Nested-collar induction]\label{thm:nested-collar}
With the family and contour data fixed above, assume that on the entire cold
plumbing annulus
\[
D_{\rm odd}
=
2M(u,\eps^2,s)v\,du,
\]
where $M$ is holomorphic and nowhere zero after shrinking, and that the
spectral projection $x_{t,s}$ is compatible with the plumbing in the normal
form of Proposition~\ref{thm:normal-form}: on the annulus it is a function
$\chi(u,\eps^2,s)$ of $u$ alone with $\chi_u$ nowhere zero, its only
ramification points there are the cold points $a_\pm$ at $(u,v)=(\pm\eps,0)$,
and $\sigma$ is its deck involution. Assume moreover the following bounds,
locally uniformly in $s$ as $t\to0$, each measured in the fixed coordinate of
its region, $\xi_a$ near a hot ramification section and $z_\pm$ on the
plumbing circles:

\begin{enumerate}[label=\textup{(\arabic*)}]
\item $B_{t,s}=O(1)$ on every separated compact product, its factors a
positive distance apart uniformly in the parameters, made from $H_D$ and the
finitely many plumbing circles.
\item For $\ell,j\geq0$ with $\ell+j<D$, $p\in E_{\ell,j}$, and
$q\in\Gamma_{a,j+1}$, every local recursion kernel $K_a(p,q)$ of
\eqref{eq:EO-kernel}, formed with $B_{t,s}$ and $\omega_{0,1;t,s}$, is $O(1)$.
\item For $\ell,j\geq0$ with $\ell+j<D$, $p\in E_{\ell,j}$, and
\[
q\in C_{\rm out}(R_{\ell+1})\cup C_{\rm in}(R_{\ell+1}),
\]
the cross-sheet primitive $J(p,q)$ is single-valued in $q$, its analytic
continuation around either circle returning to itself, and is $O(1)$, while
the coefficient of $D_{\rm odd}(q)$ with respect to $dz_+$ on
$C_{\rm out}(R_{\ell+1})$, and with respect to $dz_-$ on
$C_{\rm in}(R_{\ell+1})$, is bounded away from zero.
\end{enumerate}

Then, for every $(g,n)$ with $n\geq1$ and complexity $k\geq1$ and every
$\ell,j\geq0$ satisfying $k+\ell+j\leq D$, the correlator $\omega_{g,n;t,s}$
is $O(1)$ on every compact configuration made from $E_{\ell,j}$ which stays
off the unstable bidifferential diagonals.

In particular, for a fixed stable type with $n\geq1$ and $2g-2+n=D$, the
correlator $\omega_{g,n;t,s}$ is $O(1)$ for every assignment of its arguments
to a hot compact subset, $C_{\rm out}(R_0)$, or $C_{\rm in}(R_0)$. The
constants depend on the stable type $(g,n)$. Since the correlators and
plumbing circles are invariant under $\eps\mapsto-\eps$, the conclusion holds
on the full punctured $t$-disc.
\end{thm}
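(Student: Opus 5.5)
Induction on the complexity $k=2g-2+n$, with a \emph{stronger} inductive statement indexed by $(\ell,j)$: for each $k\ge1$ and all $\ell,j\ge0$ with $k+\ell+j\le D$, the correlator $\omega_{g,n;t,s}$ is $O(1)$ on compact configurations from $E_{\ell,j}$ off the unstable diagonals. Each recursion step moves the residue contours one collar inward, from level $(\ell,j)$ to $\Gamma_{a,j+1}$ and $C_{\rm out}(R_{\ell+1})\cup C_{\rm in}(R_{\ell+1})$. Consequently the source terms, of strictly smaller complexity, need only be bounded on $E_{\ell+1,j+1}$, and $(k-1)+(\ell+1)+(j+1)\le D$ keeps us within range. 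The base data are $\omega_{0,2}=B_{t,s}$, which is $O(1)$ on separated products by hypothesis (1), and $\omega_{0,1}$, which enters only through the kernels.

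For the inductive step, fix $p_0$ and $\mathbf p$ in $E_{\ell,j}$ and split the recursion \eqref{eq:EO-recursion} into hot and cold residues.

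\emph{Hot residues.} At each hot section $a$, write $\operatorname{Res}_{q=a}$ as $\frac1{2\pi i}\oint_{\Gamma_{a,j+1}}$. This is legitimate because the integrand $K_a\,\mathcal S_{g,n}$ has no other pole inside that disc. The disc contains no other ramification point or puncture, and the odd denominator has no other zero there. It is also disjoint from $E_{\ell,j}$, so neither $p_0$ nor the external arguments and their $\sigma_a$-images produce poles. Since $\sigma_a$ preserves the circle, $q$ and $\sigma_a(q)$ both lie in $H_{j+1}\subset E_{\ell+1,j+1}$. The kernel is bounded by hypothesis (2), each source factor is bounded by the inductive hypothesis, and the contour length is fixed.

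\emph{Cold residues.} Treat $a_+$ and $a_-$ collectively. Because $x$ depends only on $u$ on the annulus and $\sigma$ is its global deck involution there, the local involutions $\sigma_{a_\pm}$ agree with $\sigma$. The kernels $K_{a_\pm}$ then coincide with $K_{\rm cold}=J/D_{\rm odd}$, up to the factor convention that $D_{\rm odd}$ is twice the denominator difference, which I will keep track of consistently. The integrand $K_{\rm cold}(p_0,q)\mathcal S(q,\sigma(q);\mathbf p)$ is meromorphic on the annulus between $C_{\rm out}(R_{\ell+1})$ and $C_{\rm in}(R_{\ell+1})$. Its only poles there are at $a_\pm$: $D_{\rm odd}=2Mv\,du$ vanishes only at $v=0$; the inductive correlators are holomorphic away from ramification points; and $p_0,\mathbf p\in E_{\ell,j}$ lie outside this smaller annulus, so there are no diagonal poles. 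For this the radii $R_i$ must shrink strictly inward, which they do. The residue theorem therefore gives
\[
\sum_\pm\operatorname{Res}_{a_\pm}=\frac1{2\pi i}\oint_{C_{\rm out}(R_{\ell+1})+C_{\rm in}(R_{\ell+1})},
\]
provided the integrand is single-valued. This is where hypothesis (3) enters: $J$ returns to itself around the circles, and $D_{\rm odd}$ is single-valued. On the circles, $J=O(1)$ and the $dz_\pm$-coefficient of $D_{\rm odd}$ is bounded below, so $K_{\rm cold}$ is $O(1)$ as a differential in $p_0$ divided by $dz_\pm$. Since $\sigma$ swaps $C_{\rm out}(R)$ with $C_{\rm in}(R)$, both source factors are evaluated on $E_{\ell+1,j+1}$, and induction bounds them. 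Summing gives $O(1)$ for $\omega_{g,n}$ on $E_{\ell,j}$, uniformly in $s$ because all constants are. Specializing to $\ell=j=0$ and $k=D$ yields the final claim, and $\varepsilon\mapsto-\varepsilon$ invariance gives the statement on the full punctured disc.

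The main obstacle is the cold step. The individual residues at $a_\pm$ blow up, with $K_{\rm cold}$ of size $\varepsilon^{-1}$ near them, and only the pair is bounded. Justifying the contour replacement requires checking that no hidden poles sit in the annulus: the source must be holomorphic off $a_\pm$, which needs the inductive knowledge that correlators have poles only at ramification points. It also requires that the annulus at level $\ell+1$ be separated from all arguments at level $\ell$, uniformly in $t$. I would verify this first via the explicit boundary identity $\partial A=C_{\rm out}+C_{\rm in}$ of Proposition~\ref{prop:regime-atlas}.
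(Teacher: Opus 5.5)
Your architecture is the paper's. You induct on $k$ together with the auxiliary levels $(\ell,j)$, rewrite the hot residues as integrals over $\Gamma_{a,j+1}$ bounded by hypothesis~(2), and take the two cold residues together as boundary integrals over $C_{\rm out}(R_{\ell+1})+C_{\rm in}(R_{\ell+1})$ bounded by hypothesis~(3). However, the level bookkeeping as you wrote it does not close. You place all source arguments in $E_{\ell+1,j+1}$ and assert $(k-1)+(\ell+1)+(j+1)\le D$. That sum is $k+\ell+j+1$, which equals $D+1$ whenever $k+\ell+j=D$. This includes $k=D$, $\ell=j=0$, the case that gives the final conclusion, so the inductive hypothesis you invoke is unavailable exactly where it is needed. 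Re-indexing does not rescue this scheme either. Advancing both indices at every step walks the chain $(0,0),(1,1),(2,2),\dots$, which would require hypotheses~(2) and~(3) at levels with $\ell+j$ as large as $2D-2$. The theorem grants them only for $\ell+j<D$.

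The missing observation is that each type of residue advances only one index. At a hot section, the recursion variables $q,\sigma_a(q)$ lie on $\Gamma_{a,j+1}$ while the external arguments stay in $E_{\ell,j}$. So every source factor is evaluated on $E_{\ell,j+1}$, and you need only $(k-1)+\ell+(j+1)\le D$. At the cold pair, $q$ and $\sigma(q)$ lie on the two opposite plumbing circles of radius $R_{\ell+1}$. So the source lives on $E_{\ell+1,j}$, and you need only $(k-1)+(\ell+1)+j\le D$. Both sums equal $k+\ell+j$, which is why the statement is indexed by $k+\ell+j\le D$. With this correction your argument coincides with the paper's.

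Two smaller points. First, the residue theorem needs the cold integrand to be single-valued on the whole closed annulus, not just on its boundary. This holds because $B_{t,s}(p_0,\cdot)$ is holomorphic on the annulus. Hence the monodromy of $J(p_0,\cdot)$ around the core is the homotopy invariant $\int_\gamma B_{t,s}(p_0,\cdot)$, with the factor $\tfrac12$ compensating for the opposite circuit of $\sigma(q)$. Hypothesis~(3) forces this invariant to vanish, as the paper spells out. Second, $D_{\rm odd}=\omega_{0,1}-\sigma^*\omega_{0,1}$ is exactly the kernel denominator. So $K_{a_\pm}=K_{\rm cold}$ near the cold points, with no extra factor of two.
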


\begin{proof}
The projection-compatibility hypothesis makes $a_\pm$ fixed points of
$\sigma$, and $\sigma$ agrees near $a_\pm$ with the local involutions
$\sigma_{a_\pm}$. We induct on the complexity $k$, proving at each stage the assertion for every pair $(\ell,j)$ with
\[
k+\ell+j\leq D.
\]

\emph{Step 1: the base case.} At complexity zero there is nothing beyond the initialization:
$\omega_{0,2}=B_{t,s}$, which assumption (1) bounds on every separated configuration.

Fix $k\geq1$ and assume the assertion for smaller complexities at every permitted pair of levels $(\ell,j)$. Write, in the notation of \eqref{eq:EO-recursion},
\[
\omega_{g,n}(p_0,\mathbf p)
=
\sum_{a\in\Ram(x_{t,s})}
\operatorname*{Res}_{q=a}
K_a(p_0,q)\,
\mathcal S_{g,n}(q,\sigma_a(q);\mathbf p),
\]
where $\mathcal S_{g,n}$ is the recursion source of \eqref{eq:EO-recursion}. Every stable factor has complexity at most $k-1$, and every unstable factor is $B_{t,s}$.

\emph{Step 2: hot residues.} Suppose first that $a$ is hot and the external arguments lie in
$E_{\ell,j}$. Since $k\geq1$, the next contour
$\Gamma_{a,j+1}$ exists. We compute the residue on that contour. Every external hot argument is either outside the corresponding ramification disc or lies on an earlier circle of strictly larger radius. Thus neither $q$ nor
$\sigma_a(q)$ can meet an external argument, and no factor
$B_{t,s}(q,p_i)$ or $B_{t,s}(\sigma_a(q),p_i)$ reaches its diagonal.
The source is evaluated on $E_{\ell,j+1}$ and has complexity at most
$k-1$, while
\[
(k-1)+\ell+(j+1)=k+\ell+j\leq D.
\]
The induction hypothesis, assumptions (1) and (2), and the fixed contour length therefore bound every hot residue by $O(1)$.

\emph{Step 3: the cold pair.} We treat the two cold residues together. Suppose the external arguments lie in $E_{\ell,j}$, where $k+\ell+j\leq D$. Since $k\geq1$, $\ell<D$. Let $A_{\ell+1}$ be the closed plumbing annulus bounded by
\[
C_{\rm out}(R_{\ell+1})+C_{\rm in}(R_{\ell+1}).
\]
Both cold ramification points lie in its interior for sufficiently small $t$. Let $\gamma$ denote the class of either plumbing circle in $A_{\ell+1}$, oriented counterclockwise in $z_+$; in the toric application it is the vanishing cycle.

The full cold integrand
\[
K_{\rm cold}(p_0,q)
\mathcal S_{g,n}(q,\sigma(q);\mathbf p)
\]
is a single-valued meromorphic one-form in $q$ on $A_{\ell+1}$; we verify
this now. Assumption (3) gives single-valuedness of $J(p_0,\cdot)$ only along
the plumbing circles, and we upgrade it to all of $A_{\ell+1}$. Put
$\beta=B_{t,s}(p_0,\cdot)$ and let
$c_q$ be a continuously varying relative integration chain from
$\sigma(q)$ to $q$. Since $p_0$ lies outside $A_{\ell+1}$, by the case analysis of the diagonal
inventory below, the form $\beta$ is holomorphic there. When $q$ makes one positive circuit of
$\gamma$, the point $\sigma(q)$, whose plumbing coordinate is $\eps^2/z_+$, makes the opposite circuit, so
\[
[c_q]_{\rm final}-[c_q]_{\rm initial}
=
\gamma-(-\gamma)
=
2\gamma.
\]
The prefactor $\frac12$ therefore makes the monodromy of $J$ equal to one
$\gamma$-period:
\[
\Delta_\gamma J
=
\frac12\int_{2\gamma}\beta
=
\int_\gamma\beta.
\]
The two plumbing circles in assumption~\textup{(3)} are homotopic to $\gamma$
in $A_{\ell+1}$, and the assumed single-valuedness of $J$ on them forces this
$\gamma$-period to vanish. Since every closed loop in the annulus is homologous
to a multiple of $\gamma$, $J$ is single-valued on the whole annulus. All remaining factors of the integrand are defined intrinsically in $q$,
hence single-valued on $A_{\ell+1}$. Stable correlators have poles only at
ramification points in each argument \cite{EO07}. The remaining possible
poles arise from diagonal factors
\[
B_{t,s}(q,p_i),\qquad
B_{t,s}(\sigma(q),p_i),\qquad
B_{t,s}(q,\sigma(q))
\]
in an unstable source; the last has poles in $A_{\ell+1}$ only at the cold
points, where $q=\sigma(q)$.

It remains to locate the zeros of the odd denominator. In the $z_+$-coordinate $z$,
\[
v=\frac{z^2-\eps^2}{2z},
\qquad
du=\frac{z^2-\eps^2}{2z^2}\,dz,
\]
and therefore
\[
D_{\rm odd}
=
\frac{M(u,\eps^2,s)(z^2-\eps^2)^2}{2z^3}\,dz.
\]
The annulus does not contain $z=0$, and $M$ is nowhere zero. Thus the only zeros of $D_{\rm odd}$ in the annulus are
\[
z=\eps,\qquad z=-\eps,
\]
the two cold points. Near the cold points $\sigma$ agrees with the local involutions
$\sigma_{a_\pm}$, so the cross-sheet primitive $J$ agrees there with the local
kernel primitives, and the monodromy computation above leaves it no period
ambiguity.

No diagonal lies in $A_{\ell+1}$. If $p_i$ is hot, it lies outside the neck.
If $p_i$ lies on an outer circle of radius $R_m$ with $m\leq\ell$, then
\[
R_m>R_{\ell+1}
\]
and it lies outside the outer boundary. If $p_i$ lies on the corresponding inner plumbing circle, its $z_+$-modulus is
\[
\frac{|\eps|^2}{R_m}
<
\frac{|\eps|^2}{R_{\ell+1}},
\]
so it lies inside the inner boundary. Hence the only poles of the cold integrand in $A_{\ell+1}$ are the two cold ramification points, including the diagonal $q=\sigma(q)$ poles there.

The residue theorem with the plumbing boundary orientation gives
\begin{align}
&\sum_{\alpha\in\{a_+,a_-\}}
\operatorname*{Res}_{q=\alpha}
K_{\rm cold}(p_0,q)\mathcal S_{g,n}
\nonumber\\
&\qquad=
\frac{1}{2\pi i}
\left(
\int_{C_{\rm out}(R_{\ell+1})}
+
\int_{C_{\rm in}(R_{\ell+1})}
\right)
K_{\rm cold}(p_0,q)\mathcal S_{g,n}.
\label{eq:cold-pair-boundary}
\end{align}
On either boundary, $q$ and $\sigma(q)$ lie on the two opposite plumbing circles at level $\ell+1$. Every stable source factor has complexity at most $k-1$, and
\[
(k-1)+(\ell+1)+j=k+\ell+j\leq D.
\]
Each such factor is therefore $O(1)$ by induction. Assumption (1) bounds the bidifferential factors, and assumption (3) bounds the cold kernel. Both plumbing circles have fixed length in their plumbing coordinates. The boundary integrals, and hence the complete pair of cold residues, are $O(1)$.

Adding the hot and cold contributions closes the induction. Taking
$k=D$ and $\ell=j=0$ gives the bound on $H_{\rm base}$ and the two outermost plumbing circles.

\emph{Step 4: uniformity and the full disc.} All constants arise from finitely many contours and source terms for the fixed stable type, so for each fixed stable type they are locally uniform in $s$. Changing $\eps$ to $-\eps$ exchanges the cold points and changes only the plumbing presentation. The correlators, plumbing circles, and paired cold residue sum are intrinsic. Since $t$ is single-valued in $\eps^2$, the estimate holds on the full punctured $t$-disc.
\end{proof}

We now verify these hypotheses in the toric family. Work in the toric
one-node family, good framing, conifold-adapted marking, exact period, and
plumbing coordinates of Propositions~\ref{thm:generic-framing},
\ref{thm:normal-form}, \ref{prop:regime-atlas} and
Corollary~\ref{cor:toric-sewing}, and set
\[
C_{t,s}=C_{q(t,s),s},
\qquad
\omega_{0,1;t,s}=\omega_{0,1;q(t,s),s},
\qquad
B_{t,s}=B_{q(t,s),s}=B_t,
\]
so that the whole spectral family is parametrized by the exact period. Fix a
hot compact subset $K_{\rm hot}$ away from every puncture, ramification point,
node preimage, and unstable bidifferential diagonal, and a sufficiently small
fixed plumbing radius $R$ below a fixed buffer radius whose closed plumbing
discs are disjoint from $K_{\rm hot}$.

\begin{thm}[Exterior bound for the toric family]\label{thm:exterior-bound}
In this toric family, for every fixed stable type with $n\geq1$ and
\[
D=2g-2+n\geq1
\]
and every assignment of its $n$ arguments to $K_{\rm hot}$, $C_{\rm out}(R)$,
or $C_{\rm in}(R)$, one has
\[
\omega_{g,n;t,s}=O(1)
\]
on compact configurations avoiding the unstable bidifferential diagonals.

The bound is locally uniform in $s$, holds on the full punctured $t$-disc, has no logarithmic term, and
its constant depends on the stable type.
\end{thm}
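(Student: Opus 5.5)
The plan is to deduce the statement from Theorem~\ref{thm:nested-collar} with $D=2g-2+n$. That requires choosing contour data for the toric family and checking hypotheses (1)--(3) there. Fix radii $R=R_0>R_1>\cdots>R_D>0$ inside the plumbing charts, all outside the smaller node tube of Proposition~\ref{thm:generic-framing}. Then every circle $C_{\rm out}(R_i)$, $C_{\rm in}(R_i)$ is a hot compact subset by Proposition~\ref{prop:regime-atlas}(2). Take $H_{\rm base}=K_{\rm hot}$. Around each hot ramification section, use the coordinate $\xi_a$ given by the local square root of $x_f-x_f(a)$, which is holomorphic in $(t,s)$ by the implicit-function continuation in the proof of Proposition~\ref{thm:generic-framing}. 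Choose nested radii $r_{a,j}$ small enough that:
\begin{itemize}
\item each closed disc excludes the other ramification points and the punctures;
\item each closed disc excludes the plumbing annulus and $K_{\rm hot}$;
\item the closed disc of $\Gamma_{a,j+1}$ is disjoint from $E_{\ell,j}$.
\end{itemize}
The last condition is automatic because the disc of $\Gamma_{a,j+1}$ lies strictly inside $\Gamma_{a,j}$. All of this is uniform for $s$ in a compact set once $|t|$ is small. The projection-compatible form $D_{\rm odd}=2Mv\,du$ on the annulus, with $x$ a function of $u$ alone, comes from Proposition~\ref{thm:normal-form}(3) and~(1).

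\textbf{Hypothesis (1).} Every point of $H_D$ and every plumbing circle is hot, and the factors of each separated product are a fixed distance apart. The bound $B_t=O(1)$ then follows from Theorem~\ref{thm:bergman-package}(1) via Corollary~\ref{cor:toric-sewing}. This holds both on same-component and on cross-component products, and it is holomorphic in $t$ with no logarithm.

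\textbf{Hypothesis (2).} For $q\in\Gamma_{a,j+1}$ and $p\in E_{\ell,j}$, the numerator $\frac12\int_{\sigma_a(q)}^qB_t(p,\cdot)$ is integrated along a short path inside the disc. That disc stays away from $p$, so the numerator is bounded by hypothesis (1) together with derivative bounds. For the denominator, $\omega_{0,1}-\sigma_a^*\omega_{0,1}$ on $\Gamma_{a,j+1}$ equals $\xi_a\,d\xi_a$ times $2\,dy/d\xi_a$ up to holomorphic factors. Since $dy\neq0$ at the hot ramification points (Proposition~\ref{thm:generic-framing}(1)), this is bounded below on the circle by continuity and compactness. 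Hence $K_a=O(1)$.

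\textbf{Hypothesis (3).} I expect this to be the main step.
\begin{itemize}
\item \emph{Single-valuedness.} By the monodromy computation in the proof of Theorem~\ref{thm:nested-collar}, this reduces to the vanishing of $\int_\gamma B_t(p,\cdot)$ for $p$ outside the annulus $A_{\ell+1}$. In the nonseparating case this is $A$-normalization, since $\gamma$ is the first $A$-cycle. In the separating case it is the residue theorem on a component, where $\gamma$ is null-homologous away from $p$.
\item \emph{Boundedness of $J$.} For $q\in C_{\rm out}(R_{\ell+1})$, the point $\sigma(q)$ lies on $C_{\rm in}(R_{\ell+1})$. I would write $J$ as a path from $\sigma(q)$ across the neck to $q$, namely through $\delta_{\rm neck}$-type radial segments, and split it in the $z_+$ coordinate. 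For instance, integrate from $q$ radially to $C_{\rm out}(R_0)$ along the hot collar, around the exterior, and back. The cleaner option is to note that $B_t(p,\cdot)$ is holomorphic on the whole cold annulus with zero $\gamma$-period, so it has a single-valued primitive $\Psi$ there, and $J=\frac12(\Psi(q)-\Psi(\sigma(q)))$.
\item \emph{Bounding $\Psi$.} The Laurent expansion of $B_t(p,\cdot)$ on the annulus in $z_+$ has no $dz_+/z_+$ term. Its coefficients are controlled by Cauchy estimates on the fixed outer circle $|z_+|=R_0$ for nonnegative modes, and on $|z_-|=R_0$ for negative modes after substituting $z_-=\eps^2/z_+$; both circles are hot, so hypothesis (1) applies. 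The primitive evaluated on circles of radius $R_{\ell+1}$ and $\eps^2/R_{\ell+1}$ then reduces to a geometric series with ratio $R_{\ell+1}/R_0<1$, hence $O(1)$.
\item \emph{Lower bound on the denominator.} With $D_{\rm odd}=M(z^2-\eps^2)^2/(2z^3)\,dz$ and $|z|=R_{\ell+1}\gg|\eps|$, the coefficient tends to $M\cdot\tfrac12 z$, which is bounded away from zero. The $C_{\rm in}$ circle is symmetric.
\end{itemize}

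With all three hypotheses verified, Theorem~\ref{thm:nested-collar} with $\ell=j=0$ gives the claim. Local uniformity in $s$ and extension to the full punctured $t$-disc come from its final clause. The absence of a logarithmic term follows because each correlator is a single-valued function of $t$ on the punctured disc: the $\eps\mapsto-\eps$ invariance, together with the monodromy invariance of the $A$-polarization in Proposition~\ref{prop:regime-atlas}(6), makes the correlators monodromy-invariant.
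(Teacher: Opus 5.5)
Your overall route is the paper's. You verify hypotheses (1)--(3) of Theorem~\ref{thm:nested-collar} with $D=2g-2+n$. For (3), you expand $B_t(p,\cdot)$ as a Laurent series in $z_+$ on a plumbing annulus, control its two tails on the boundary circles, and remove the $dz_+/z_+$ mode by $A$-normalization or the residue theorem. The gap is in the choice of that annulus.

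You work on the whole cold annulus $\{|\eps|^2/R_0\leq|z_+|\leq R_0\}$, assert that $B_t(p,\cdot)$ is holomorphic there, and take Cauchy estimates on $|z_+|=R_0$ and $|z_-|=R_0$. But hypothesis (3) must hold for every $p\in E_{\ell,j}$, and $E_{\ell,j}$ contains the plumbing circles $C_{\rm out}(R_i)$ and $C_{\rm in}(R_i)$ for all $i\leq\ell$.
\begin{itemize}
\item For $\ell\geq1$, these circles lie strictly inside your annulus. So $B_t(p,\cdot)$ has its double pole there, it has no Laurent expansion on that annulus, and your primitive $\Psi$ is not holomorphic on it.
\item For $\ell=0$, $p$ may lie on $C_{\rm out}(R_0)$ or $C_{\rm in}(R_0)$ itself. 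These are exactly the circles where you want Cauchy estimates, and hypothesis (1) only bounds separated products.
\end{itemize}
This is not an edge case. It is precisely the configuration the nested-collar induction produces: at level $\ell$, the external arguments of the source sit on the earlier plumbing circles inside the neck. That is the reason nested radii appear at all.

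The repair is a level-dependent control annulus. Fix a radius $\rho_\ell$ with $R_{\ell+1}<\rho_\ell<R_\ell$; the paper takes $\rho_\ell=R_\ell/2$ and imposes $R_j\geq8R_{j+1}$, so that $\rho_\ell\geq4R_{\ell+1}$. Then work on $\{|\eps|^2/\rho_\ell\leq|z_+|\leq\rho_\ell\}$.
\begin{itemize}
\item Every allowed $p$ lies outside this annulus, at a fixed distance from its two boundary circles. Outer circles of $E_{\ell,j}$ have $|z_+|\geq R_\ell$, and inner ones have $|z_+|\leq|\eps|^2/R_\ell$.
\item The fixed-scale estimates of Theorem~\ref{thm:bergman-package} bound $B_t(p,\cdot)$ on those two fixed plumbing-coordinate circles.
\item The coefficient $a_{-1}$ still vanishes.
\item The two tails integrate, at $|z_+|=R_{\ell+1}$ and $|z_+|=|\eps|^2/R_{\ell+1}$, to geometric series with ratio $R_{\ell+1}/\rho_\ell<1$.
\end{itemize}
With this change your argument goes through.

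Two smaller points in hypothesis (2).
\begin{itemize}
\item The hot odd denominator is $2\xi_a^2h(\xi_a)\,d\xi_a$ with $h(0)=\partial_{\xi_a}y(a)$, not $\xi_a\,d\xi_a$ times $2\,dy/d\xi_a$. You also need $h$ to be zero-free on the closed discs, uniformly in $(t,s)$; the paper obtains this by equicontinuity.
\item Your ``automatic'' disjointness of the disc of $\Gamma_{a,j+1}$ from $E_{\ell,j}$ also uses that the outermost discs around distinct hot sections are pairwise disjoint. You should impose that explicitly.
\end{itemize}
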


\begin{proof}
\emph{Step 1: the setting.} Proposition~\ref{thm:generic-framing} gives the good framing, ordinary recursion, fixed hot region, uniformly simple hot ramification points, and exactly two cold points. Propositions~\ref{thm:normal-form} and \ref{prop:regime-atlas} put the neck in the projection-compatible form required by Theorem~\ref{thm:nested-collar}: the projection is a function $\chi(u,\eps^2,s)$ of $u$ alone with $\chi_u$ nowhere zero, $\sigma(u,v)=(u,-v)$ is its deck involution, the cold points are $(u,v)=(\pm\eps,0)$, and
\[
\omega_{0,1}-\sigma^*\omega_{0,1}
=
2M(u,\eps^2,s)v\,du,
\]
with $M$ holomorphic and bounded away from zero, locally uniformly in $s$.

\emph{Step 2: the nested contours.} Fix $D$. Set $H_{\rm base}=K_{\rm hot}$ and choose, around every hot
ramification section, the finite nested circles fixed before
Theorem~\ref{thm:nested-collar}. Fix a buffer radius $R_{\rm buf}$ whose closed discs $|z_\pm|\leq R_{\rm buf}$
are disjoint from $H_D$ and from the closed outermost hot discs; the plumbing
radius $R$ fixed above, shrunk if necessary below $R_{\rm buf}$, qualifies. Choose radii
\[
R_0=R>R_1>\cdots>R_D>0,
\qquad
R_j\geq8R_{j+1}.
\]

\emph{Step 3: assumption (1).} The fixed-scale estimates of Theorem~\ref{thm:bergman-package} give an $O(1)$ bound for $B_t$ on every separated product of $H_D$ and the plumbing circles. On a product meeting the bidifferential diagonal, the bound of Theorem~\ref{thm:bergman-package} is stated after subtraction of the common principal part; the separated products of assumption (1) stay off the diagonal, and the stable configuration sets avoid the remaining unstable bidifferential diagonals. This verifies assumption (1) of Theorem~\ref{thm:nested-collar}.

\emph{Step 4: assumption (2).} Let $a(t,s)$ be a hot ramification section. Uniform simplicity of the hot
sections gives a fixed local coordinate $\xi$, centered at $a(t,s)$, in which
\[
x_f-x_f(a)=\frac{\xi^2}{2},
\qquad
\sigma_a(\xi)=-\xi.
\]
In this coordinate the local odd denominator is
\[
\omega_{0,1}-\sigma_a^*\omega_{0,1}
=
2\xi^2h_{a;t,s}(\xi)\,d\xi,
\qquad
h_{a;t,s}(0)=\partial_\xi y(a(t,s))\neq0.
\]
The uniform simplicity and nonvanishing of $dy$ supplied by
Proposition~\ref{thm:generic-framing} give a uniform positive lower bound for
$|h_{a;t,s}(0)|$. Equicontinuity of the holomorphic family $h_{a;t,s}$ then gives a single
radius, uniform in $(t,s)$, below which $h_{a;t,s}$ is zero-free on every
closed disc bounded by a circle $\Gamma_{a,j}$. In particular, the coefficient
of the odd denominator is uniformly bounded away from zero on every circle
$\Gamma_{a,j}$.
The numerator
\[
\frac12\int_{\sigma_a(q)}^qB_t(p,\cdot)
\]
is integrated along the straight segment from $-\xi$ to $\xi$, of length
$2r_{a,j+1}$, inside the closed disc bounded by $\Gamma_{a,j+1}$, which is
disjoint from every allowed $p$.
The fixed-scale estimates of Theorem~\ref{thm:bergman-package} bound $B_t$ on
all separated products of an earlier circle with the next circle. The paths stay off the bidifferential diagonal, so integrating along them
makes every hot recursion kernel $O(1)$ and verifies assumption (2).

\emph{Step 5: assumption (3).} Fix $0\leq\ell<D$ and put
\[
R_{\rm cross}=R_{\ell+1},
\qquad
R_{\rm control}=\frac{R_\ell}{2}.
\]
Then $R_{\rm control}\geq4R_{\rm cross}$: the first is the radius at which
the cross-sheet primitive is estimated, the second the radius of the annulus
on which $B_t(p,\cdot)$ is expanded. Let $A_{\rm ctrl}$ be the plumbing
annulus
\[
\frac{|\eps|^2}{R_{\rm control}}\leq|z_+|\leq R_{\rm control}.
\]
Every allowed $p$ lies outside $A_{\rm ctrl}$: a point of $H_D$ avoids the
neck, a point on an earlier outer plumbing circle has
$|z_+|\geq R_\ell>R_{\rm control}$, and a point on an earlier inner circle
has $|z_+|\leq|\eps|^2/R_\ell<|\eps|^2/R_{\rm control}$, inside the inner
boundary.

On $A_{\rm ctrl}$ use the outer coordinate $z=z_+$. Since every allowed $p$
lies outside $A_{\rm ctrl}$, the one-form $B_t(p,\cdot)$ is
holomorphic there and has a normally convergent Laurent expansion
\[
B_t(p,z)=\sum_{n\in\Z}a_nz^n\,dz.
\]
The fixed-scale estimates of
Theorem~\ref{thm:bergman-package} give the required uniform bounds on the two boundary circles of
$A_{\rm ctrl}$, each a fixed circle in a plumbing coordinate and hence a hot
compact subset in the sense of Proposition~\ref{prop:regime-atlas}. The bound
on the outer boundary circle controls the tail $n\geq0$, while the bound on
the inner one, in $z_-=\eps^2/z$, controls the tail $n\leq-2$. Moreover,
\[
2\pi i\,a_{-1}
=
\int_\gamma B_t(p,\cdot)
=
0,
\]
by the $A$-normalization of the conifold-adapted marking in the nonseparating case and by the residue
theorem in the separating case. Termwise integration of the two geometric tails, along any path from
$\sigma(q)$ to $q$ inside $A_{\rm ctrl}$, therefore gives
\[
\int_{\sigma(q)}^qB_t(p,\cdot)=O(1)
\]
for $q$ on either radius-$R_{\rm cross}$ plumbing circle. There is no
logarithmic term, and the vanishing $\gamma$-period makes the primitive
single-valued. The boundary bounds and normal convergence make the estimate
uniform around each complete circle and locally uniform in $s$.

In the outer coordinate $z=z_+$,
\[
v=\frac{z^2-\eps^2}{2z},
\qquad
du=\frac{z^2-\eps^2}{2z^2}\,dz,
\]
so
\[
\omega_{0,1}-\sigma^*\omega_{0,1}
=
\frac{M(u,\eps^2,s)(z^2-\eps^2)^2}{2z^3}\,dz.
\]
On $|z|=R_{\rm cross}$, once
\[
|\eps|^2\leq\frac{R_{\rm cross}^2}{2},
\]
the absolute value of the coefficient is at least
\[
\frac{mR_{\rm cross}}8,
\]
where $m$ is a positive lower bound for $|M|$, locally uniform in $s$. In the inner coordinate the coefficient differs only by a sign, so the same lower bound holds. Dividing the bounded cross-sheet primitive by this fixed positive lower bound verifies assumption (3).

\emph{Step 6: conclusion and descent.} Theorem~\ref{thm:nested-collar} now applies and gives $O(1)$ for the target
correlator on every assignment to $H_{\rm base}=K_{\rm hot}$, $C_{\rm out}(R)$,
and $C_{\rm in}(R)$.

Only finitely many nested circles occur for a fixed stable type. The Bergman estimates, the hot-simplicity lower bounds, and the denominator lower bounds are locally uniform in $s$. Every plumbing assertion is formulated in the single-valued transverse
parameter of Corollary~\ref{cor:toric-sewing}, which satisfies $q=q(t,s)$ on a
full $t$-disc. Hence the $O(1)$ bound holds on the full punctured $t$-disc.
\end{proof}

\subsection{The Gaussian leading limit}\label{sec:gaussian-limit}

We record a local parity consequence of the recursion which will also be used in Section~\ref{sec:defect}.

\begin{lem}[Local parity at simple ramification]\label{lem:local-parity}
Let $\omega_{g,n}$ be a stable correlator of a regular ordinary
topological-recursion spectral curve, with $n\geq1$. Fix all arguments except
the first away from the ramification points, and write
\[
\alpha(p)=\omega_{g,n}(p,p_2,\ldots,p_n).
\]
If $\xi$ is a local coordinate at a simple ramification point $a$, with local
deck involution $\sigma(\xi)=-\xi$, then $\alpha+\sigma^*\alpha$ is holomorphic there.
Consequently, if $\varphi$ is holomorphic and invariant under $\sigma$, then
\[
\operatorname*{Res}_{\xi=0}\varphi(\xi)\alpha(\xi)=0.
\]
In particular, every stable correlator has zero residue at a ramification
point in each external variable.
\end{lem}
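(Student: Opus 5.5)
The plan is to prove the holomorphy of $\alpha+\sigma^*\alpha$ at $a$ by induction on the complexity $2g-2+n$, directly from the recursion \eqref{eq:EO-recursion}. Only the residue term at $a$ itself can matter. The residues at the other ramification points $b\neq a$ are holomorphic functions of $p_0$ near $a$: the kernel $K_b(p_0,q)$ is holomorphic in $p_0$ away from $q$ and $\sigma_b(q)$, and $q$ runs over a small circle around $b$. So it suffices to show that
\[
\Omega(p_0)=\operatorname*{Res}_{q=a}K_a(p_0,q)\,\mathcal S_{g,n}(q,\sigma_a(q);\mathbf p)
\]
satisfies $\Omega+\sigma^*\Omega$ holomorphic at $a$, with $\sigma^*$ acting on $p_0$.

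The key step uses the cross-sheet primitive $J_a(p_0,q)=\frac12\int_{\sigma_a(q)}^qB(p_0,\cdot)$. First, $J_a(p_0,q)+J_a(\sigma_a(p_0),q)$, as a form in $p_0$ near $a$, is holomorphic in $p_0$ for $q$ on the residue circle, even when $p_0$ is inside that circle. To see this, look at the double pole of $B(p_0,\cdot)$ at $p_0=q$. In $J_a$ it produces a simple pole at $p_0=q$ with coefficient $\frac12$ and one at $p_0=\sigma_a(q)$ with coefficient $-\frac12$. Pulling back by $\sigma_a$ in $p_0$ swaps these two poles. Since $\sigma_a^*B(\cdot,\sigma_a w)$ has the same principal part at $w$, the poles cancel in the sum. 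What remains is regular near $a$, by symmetry of $B$ and holomorphy of $B(p_0,w)+B(\sigma_a p_0,w)$ off the diagonals.

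The denominator $\omega_{0,1}(q)-\omega_{0,1}(\sigma_a q)$ and the source depend only on $q$. Hence $\Omega(p_0)+\Omega(\sigma_a p_0)$ can be written as a residue, or equivalently a contour integral over a small circle $|\xi(q)|=r$, of a kernel that is holomorphic in $p_0$ for $|\xi(p_0)|<r$. To make this rigorous I would deform the contour: take $p_0$ inside a circle of radius $r'<r$, express $\operatorname*{Res}_{q=a}$ as the integral over $|\xi(q)|=r$ minus the residues at $q=p_0$ and $q=\sigma_a(p_0)$, and note that these two residue contributions come from the poles of $K_a$ and are exchanged with opposite signs by $\sigma_a$. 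So they cancel in the symmetrization, and the contour integral is manifestly holomorphic in $p_0$. The induction hypothesis is needed only to know that source factors have no poles other than at ramification points, which holds for stable correlators \cite{EO07}; the parity itself is produced by the kernel. This contour bookkeeping, keeping the signs of the residues at $q=p_0$ and $q=\sigma_a(p_0)$ straight, is the step I expect to need the most care.

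The consequence follows quickly. If $\varphi$ is $\sigma$-invariant, then
\[
\operatorname*{Res}\varphi\alpha=\operatorname*{Res}\sigma^*(\varphi\alpha)=\operatorname*{Res}\varphi\,\sigma^*\alpha,
\]
because residues are invariant under a local biholomorphism fixing $a$. Therefore
\[
2\operatorname*{Res}\varphi\alpha=\operatorname*{Res}\varphi(\alpha+\sigma^*\alpha)=0,
\]
since $\varphi(\alpha+\sigma^*\alpha)$ is holomorphic. Taking $\varphi=1$ gives zero residue in each external variable, by symmetry of the correlators.
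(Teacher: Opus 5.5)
Your proposal is correct and follows essentially the same route as the paper: the residues at $b\neq a$ are holomorphic near $a$, the parity comes entirely from the cross-sheet primitive $J_a$ (its poles at $p_0=q$ and $p_0=\sigma_a(q)$ cancel under symmetrization in $p_0$), and the anti-invariance of the principal part then forces $\Res\varphi\alpha=0$. The only difference is how holomorphy is carried through the residue at $q=a$: you peel off the residues at $q=p_0$ and $q=\sigma_a(p_0)$, whose sum is anti-invariant, whereas the paper splits $B$ into its Cauchy part, which contributes the exactly anti-invariant term $\frac{\eta}{\xi^2-\eta^2}\,d\xi$ to $J_a$, and a holomorphic remainder.
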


\begin{proof}
We use the first argument as the distinguished variable in the recursion.
The dependence on the distinguished variable enters through the cross-sheet primitive
\[
J_b(p,q)=\frac12\int_{\sigma_b(q)}^qB(p,\cdot)
\]
of \eqref{eq:EO-kernel} at each ramification point $b$. If $b\neq a$, the integration path stays away
from $\xi=0$, and the symmetrization of $J_b$ is holomorphic in $p$.

It remains to consider the residue at $a$. On the coordinate disc write
$\xi$, $\xi'$, and $\eta$ for the local coordinates of $p$, of the second
argument of $B$, and of the residue variable $q$. Decompose the
bidifferential as
\[
B
=
\frac{d\xi\,d\xi'}{(\xi-\xi')^2}
+B_{\rm hol}(\xi,\xi')\,d\xi\,d\xi',
\]
where $B_{\rm hol}$ is holomorphic. The diagonal part of $J_b$ at $b=a$ is
\[
J^{\rm diag}(p,q)
=
\frac12\left(\frac1{\xi-\eta}-\frac1{\xi+\eta}\right)d\xi
=
\frac{\eta}{\xi^2-\eta^2}\,d\xi.
\]
Thus $\sigma^*J^{\rm diag}=-J^{\rm diag}$, the sign coming from
$\sigma^*d\xi=-d\xi$. For the holomorphic remainder, put
\[
F(\xi,\eta)=\frac12\int_{-\eta}^{\eta}B_{\rm hol}(\xi,\xi')\,d\xi',
\]
with the integration path inside $|\xi'|<|\xi|$, off the diagonal. The
symmetrization satisfies
\[
F\,d\xi+\sigma^*(F\,d\xi)=\bigl(F(\xi,\eta)-F(-\xi,\eta)\bigr)d\xi;
\]
since $F$ is odd in $\eta$ and the difference vanishes at $\xi=0$, this is
divisible by $\xi\eta$ in the ring of germs holomorphic at
$(\xi,\eta)=(0,0)$. The local odd denominator vanishes to order two in
$\eta$, so after division the symmetrized kernel contribution has at most a
simple pole in $\eta$, and taking the residue in $\eta$ leaves a quantity
divisible by $\xi$, hence holomorphic in $\xi$.
The recursion source is independent of the distinguished variable, so the
dependence of each residue on $p$ lies entirely in the numerators treated
above; summing the ramification residues proves that $\alpha+\sigma^*\alpha$
is holomorphic.

Thus the principal part $\alpha_{\rm pr}$ of $\alpha$ at $\xi=0$ is
anti-invariant. Since residues are unchanged by the local involution and $\varphi$
is invariant,
\[
\operatorname*{Res}_{\xi=0}\varphi\alpha
=
\operatorname*{Res}_{\xi=0}\varphi\alpha_{\rm pr}
=
-\operatorname*{Res}_{\xi=0}\varphi\alpha_{\rm pr}
=0.
\]
Taking $\varphi=1$ proves the residue assertion in the distinguished variable, and
symmetry of the correlators gives the assertion in every external variable.
\end{proof}

The scaled limit is governed by ordinary recursion on the Gaussian curve.
Recall from Section~\ref{sec:normal-form} the curve $G$, its coordinate
$\zeta$, the circle $\gamma_G$, and the constants
$\kappa=\int_{\gamma_G}w\,dz$ and $c_{\rm per}$, and from
Section~\ref{sec:bergman} the bidifferential $B_G$. Define the \emph{unit
exact-period Gaussian data} on $G$ by
\[
\overline\omega_{0,1}^{\,G}
=
\frac{1}{c_{\rm per}\kappa}w\,dz,
\qquad
\omega_{0,2}^{G}=B_G;
\]
the overline marks this normalization, and
$c_{\rm per}\int_{\gamma_G}\overline\omega_{0,1}^{\,G}=1$, so the exact
period of these data equals $1$. Denote their ordinary topological-recursion
correlators by $\omega_{g,n}^{G}$, and near each ramification point
$\zeta=\pm1$ write $\Phi_G$ for the local primitive of
$\overline\omega_{0,1}^{\,G}$ vanishing there.

Call an argument of a correlator \emph{exterior} when it is placed on a
fixed hot compact subset, including a plumbing circle
(Proposition~\ref{prop:regime-atlas}), and \emph{scaled} when it is placed
on a fixed scaled-neck compact subset through $\Phi_\eps$. Sizes of scaled
arguments are measured in the coordinate $\zeta$ and sizes of exterior
arguments in the fixed coordinate of their region; recall
$|t|\asymp|\eps|^2$. In the displays below the scaled arguments are listed
first.

\begin{prop}[Mixed scaled--exterior recursion bounds]\label{prop:mixed-scaled}
Fix a stable type $(g,n)$ with $n\geq1$ and put $k=2g-2+n\geq1$. Assign $m$
of the $n$ arguments, $0\leq m\leq n$, to scaled positions and the remaining
$e=n-m$ to exterior positions. On compact configurations away from the
ramification points and off the unstable bidifferential diagonals,
\[
(\Phi_\eps^{\times m}\times\id^{\times e})^*\omega_{g,n;t,s}
=
\begin{cases}
O(1),&m=0,\\
O\!\left(|t|^{-k}|\eps|^{e}\right)
=O\!\left(|\eps|^{e-2k}\right),&m\geq1.
\end{cases}
\]
If every argument is scaled, then
\begin{equation}\label{eq:scaled-correlator}
t^k(\Phi_\eps^{\times n})^*\omega_{g,n;t,s}
=
\omega_{g,n}^G+O(\eps).
\end{equation}
The estimates hold with point derivatives of every fixed order on smaller compact subsets, locally uniformly in $s$ and on the full punctured $t$-disc.
\end{prop}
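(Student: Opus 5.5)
We argue by induction on the complexity $k=2g-2+n$, proving all three assertions simultaneously for all splittings $n=m+e$; we always take the first argument as the distinguished one in \eqref{eq:EO-recursion}, choosing it scaled whenever $m\geq1$. The case $m=0$ is exactly Theorem~\ref{thm:exterior-bound}, so assume $m\geq1$. The scaling bookkeeping is as follows. Under $\Phi_\eps$ we have $\Phi_\eps^*\omega_{0,1}^{\rm odd}=\frac{t}{c_{\rm per}\kappa}w\,dz+\eps^3\Delta_\omega$ by Proposition~\ref{thm:period-split}(3), so the odd denominator pulled back is $t\bigl(2\overline\omega_{0,1}^{\,G}+O(\eps)\bigr)$. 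By Corollary~\ref{cor:primitive-comparison}, the cross-sheet primitive pulled back is the Gaussian one plus $O(\eps)$ when the distinguished point is scaled, and $O(\eps)$ when it is exterior. Hence each kernel factor costs $t^{-1}$, and exterior legs cost $\eps$ each through \eqref{eq:mixed-B}. An Euler-characteristic count (each recursion step lowers $k$ by one and contributes one kernel) then produces $|t|^{-k}|\eps|^{e}$.

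For the inductive step we split the residue sum into hot and cold parts. The hot residues are handled as in Theorem~\ref{thm:exterior-bound}, using nested hot contours. When the distinguished variable $p_0$ is scaled, the hot kernel numerator $\int B_t(p_0,\cdot)$ is $O(\eps)$ by the mixed estimate. The source, evaluated at hot points, has its scaled arguments only among the remaining ones, and the inductive bound for it (with one more exterior argument in each factor) is at least as good as needed. Hot residues are therefore of strictly lower order. The cold pair is the main object. We first rewrite it as in \eqref{eq:cold-pair-boundary} as a boundary integral, but we now place the contour on a scaled circle $|\zeta|=\rho$ or $|\zeta|=\rho^{-1}$ chosen in a fixed scaled compact subset, rather than on a plumbing circle. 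On that circle we pull everything back by $\Phi_\eps$: the kernel contributes $t^{-1}(K_G+O(\eps))$, and each source factor, with its two neck arguments $q$ and $\sigma(q)$ now scaled, is controlled by induction. A splitting term $\omega_{g_1,|I|+1}\omega_{g_2,|J|+1}$ has complexities summing to $k-1$ and exterior legs summing to $e$. The genus-reduction term has complexity $k-1$ and $e$ exterior legs. Unstable $B$ factors are $B_G+O(\eps)$ between scaled points and $O(\eps)$ between a scaled and an exterior point. In every case the bound $|t|^{-k}|\eps|^e$ follows. When all arguments are scaled, the same computation gives a convergent comparison with the Gaussian recursion on $G$: the residues at $\zeta=\pm1$ of $K_G\,\mathcal S^G$ reproduce \eqref{eq:EO-recursion} for the unit exact-period data, because $t^{k}$ distributes as $t\cdot t^{k_1}t^{k_2}$ or $t\cdot t^{k-1}$. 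Multiplying by $t^k$ yields \eqref{eq:scaled-correlator}.

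When $p_0$ is exterior but some other argument is scaled (needed for symmetry, or if we prefer an exterior distinguished point), we use the symmetry of $\omega_{g,n}$ to move a scaled argument into the distinguished slot. This avoids ever needing a separate estimate in which the kernel alone supplies the $\eps$ factor.

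The main obstacle is justifying the scaled-contour deformation uniformly. The cold integrand must be shown to have no poles in the scaled annulus between $|\zeta|=\rho$ and the plumbing circles other than $\zeta=\pm1$. It must also be shown that the inductive bounds are available on the intermediate region, where neither the scaled nor the fixed-scale estimates directly apply. I would try first to avoid the intermediate region entirely: keep the contour at fixed scaled radii, and use the single-valuedness of $J$ from the $\gamma$-period vanishing (Theorem~\ref{thm:nested-collar}, Step~3) together with Lemma~\ref{lem:local-parity} to localize. External arguments placed at exterior positions lie outside the scaled annulus, so no diagonal intervenes. Scaled external arguments must be kept off the contour, by nesting the scaled radii as in the nested-collar induction with finitely many levels. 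Derivative bounds follow by Cauchy estimates, and the $\eps\mapsto-\eps$ equivariance of Theorem~\ref{thm:bergman-package} gives the statement on the full punctured $t$-disc.
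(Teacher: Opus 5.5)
Your architecture matches the paper's proof. You induct on $k$ with a scaled distinguished argument, read the $t^{-1}$ cost of the kernel off $\Phi_\eps^*D_{\rm odd}=t\bigl(\frac{2}{c_{\rm per}\kappa}w\,dz+O(\eps)\bigr)$, charge one $\eps$ per exterior leg via \eqref{eq:mixed-B}, and suppress hot residues by the $O(\eps)$ mixed kernel.

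The gap is in the cold pair. You rewrite the two cold residues, as in \eqref{eq:cold-pair-boundary}, as an integral over the boundary of the scaled annulus $\rho\leq|\zeta|\leq\rho^{-1}$, asking only that scaled arguments stay off the contour. But \eqref{eq:cold-pair-boundary} holds only because no pole other than the cold points lies inside the annulus, and that fails here:
\begin{enumerate}
\item The kernel $K_{\rm cold}(p_0,q)$ has simple poles at $q=p_0$ and $q=\sigma(p_0)$, coming from $\frac12\int_{\sigma(q)}^qB_t(p_0,\cdot)$.
\item The unstable source factors $B_t(q,p_i)$ and $B_t(\sigma(q),p_i)$ have double poles at $q=p_i$ and $q=\sigma(p_i)$ for every scaled $p_i$.
\end{enumerate}
Since $\sigma$ is $\zeta\mapsto\zeta^{-1}$, the annulus is $\sigma$-stable. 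Any scaled argument inside it, including $p_0$ itself, contributes residues that are generically nonzero, already for $\omega_{0,3}$.

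You cannot push the scaled arguments outside the annulus either. Every annulus centered at $\zeta=0$ that contains $\zeta=\pm1$ contains the whole unit circle. The proposition must hold for scaled arguments anywhere away from $\zeta=\pm1$, and in particular for $\omega_{g,1}$ on small circles around $\zeta=\pm1$, which is exactly how Theorem~\ref{thm:gaussian-limit} uses it. Your Gaussian comparison inherits the same defect. The limit of your boundary integral is the corresponding integral of $K_G\,\mathcal S^G$ on $G$, which differs from $\sum_{\alpha=\pm1}\Res_{\zeta=\alpha}K_G\,\mathcal S^G$ by the residues at $\zeta_0^{\pm1}$ and $\zeta_i^{\pm1}$.

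The missing observation is that after rescaling the cold residues need not be paired at all. The annular-boundary device was forced at fixed scale, where the two cold points collide. In $\zeta$ they sit at the fixed points $\pm1$, so the paper computes each cold residue on its own fixed small circle around $\zeta=1$ and around $\zeta=-1$, a fixed compact subset of $G\setminus\{0,\infty\}$. On those circles:
\begin{enumerate}
\item Theorem~\ref{thm:bergman-package} and Corollary~\ref{cor:primitive-comparison} give the numerator as $O(1)$ with its Gaussian limit.
\item The $d\zeta$-coefficient of $\frac{2}{c_{\rm per}\kappa}w\,dz+O(\eps)$ is bounded away from zero, because $w\,dz$ vanishes only at $\zeta=\pm1$.
\item The source factors are bounded by induction; nesting the small radii across recursion levels, as you planned, keeps deeper residue points off earlier arguments.
\end{enumerate}
The ``intermediate region'' you flag as the main obstacle then never arises.

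A smaller point concerns the hot step. A splitting factor with no scaled argument is only $O(1)$ by Theorem~\ref{thm:exterior-bound}. That is worse than the formal bound $|\eps|^{e_0-2k_0}$ when $(g_0,e_0)=(0,3)$, so ``at least as good as needed'' requires the explicit count. The ratio to the claimed bound is $O(|t|^{1+k_0}|\eps|^{3-e_0})$, whose total $\eps$-exponent is $1+4g_0+e_0>0$. For $m=1$ the ratio is $O(|\eps|^{1+2k-e})$.
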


\begin{proof}
\emph{Step 1: the cold residues.} The case $m=0$ is Theorem~\ref{thm:exterior-bound}. For $m\geq1$, we induct on $k$ and choose a scaled argument as the distinguished recursion variable. On either fixed cold residue circle, Theorem~\ref{thm:bergman-package} and the exact Gaussian decomposition of Proposition~\ref{thm:period-split} give, since $D_{\rm odd}=\omega_{0,1}-\sigma^*\omega_{0,1}=2\omega_{\rm odd}$,
\[
\frac12\int_{\sigma(q)}^qB_t(p_0,\cdot)=O(1),
\qquad
\Phi_\eps^*D_{\rm odd}
=
t\left(\frac{2}{c_{\rm per}\kappa}w\,dz+O(\eps)\right).
\]
The coefficient of the parenthesized form with respect to $d\zeta$ is bounded away from zero on the two fixed cold residue circles, since $w\,dz$ vanishes only at $\zeta=\pm1$; hence the cold kernel is $O(t^{-1})$ there. The bidifferential $B_t$ supplies the complexity-zero base of the induction: on the allowed off-diagonal configurations, Theorem~\ref{thm:bergman-package} bounds it by $O(1)$ when both arguments are scaled \eqref{eq:scaled-B}, by $O(\eps)$ when one argument is exterior and one is scaled \eqref{eq:mixed-B}, and by $O(1)$ when both are exterior. The genus-reduction term has complexity $k-1$; the distinguished argument
being scaled, it keeps the same $e$ exterior arguments, and its two recursion
arguments lie on the cold circles and are scaled. Every factor of a splitting term contains one of the two scaled recursion arguments; the factor complexities sum to $k-1$ and their exterior counts sum to $e$. Every term of the cold source therefore falls under one of these cases, so the cold source is
$O(|t|^{-(k-1)}|\eps|^e)$, and multiplication by the kernel bound gives the cold bound.

\emph{Step 2: the hot residues.} At a hot ramification point, the mixed Bergman estimate makes the kernel
$O(\eps)$. If $m=1$, every argument of the source is exterior and the source is $O(1)$; kernel times source is then $O(\eps)$, and its ratio to the claimed bound is $O(|\eps|^{1+2k-e})$, with nonnegative exponent by the display below. If
$m\geq2$, the two hot recursion arguments contribute two further exterior arguments; for the genus-reduction term, and for a splitting term all of whose factors contain a scaled argument, induction then gives
\[
O\!\left(|t|^{-(k-1)}|\eps|^{e+2}\right).
\]
After multiplication by the kernel, dividing by the claimed bound leaves
$O(|t||\eps|^3)$. In the remaining splitting case, let the exterior-only factor have complexity $k_0$ and $e_0$ arguments, the count including its hot recursion argument. The mixed factor and kernel give
\[
O\!\left(
|\eps|\,|t|^{-(k-1-k_0)}|\eps|^{e+2-e_0}
\right).
\]
Dividing by the claimed bound leaves
$O(|t|^{1+k_0}|\eps|^{3-e_0})$. For an unstable bidifferential,
$(k_0,e_0)=(0,2)$, so this is $O(|\eps|^3)$. For a stable factor,
$k_0=2g_0-2+e_0$, and comparison with $|t|\asymp|\eps|^2$ gives the positive exponent
\[
2(1+k_0)+3-e_0=1+4g_0+e_0>0.
\]
Thus every hot contribution obeys the mixed bound. Finally,
\[
2k-e=4g-4+n+m\geq0
\]
for a stable type with $m\geq1$, since $g\geq1$, or $g=0$ and $n\geq3$; hence the exponent $e-2k$ is nonpositive.

\emph{Step 3: Gaussian convergence.} For Gaussian convergence, scaling the Gaussian initial form by $t$ scales a stable correlator of complexity $k$ by $t^{-k}$. Half of the scaled comparison of Corollary~\ref{cor:primitive-comparison}, taken for the moving point in a fixed scaled compact subset disjoint from its integration arcs, together with the exact Gaussian decomposition of Proposition~\ref{thm:period-split}, sharpens the cold estimate to
\[
t\,(\Phi_\eps\times\Phi_\eps)^*K_{\rm cold}=K_G+O(\eps)
\]
on fixed scaled compact subsets, where $K_G$ is the recursion kernel of
$(G,\overline\omega_{0,1}^{\,G},B_G)$.
Induction in \eqref{eq:EO-recursion} therefore sends every rescaled cold source to its Gaussian counterpart with error $O(\eps)$. The rescaled hot residues are $O(\eps)$: for $n=1$ every argument of their sources is exterior and
$t^kK_a=O(\eps^{2k+1})$ at each hot ramification point $a$; for $n\geq2$ the mixed estimates above leave either the factor
$|t||\eps|^3$ or
$|t|^{1+k_0}|\eps|^{3-e_0}$, both $O(\eps)$. Hence in the limit only the two cold residues contribute, and they give the Gaussian recursion, proving \eqref{eq:scaled-correlator}. Cauchy estimates give the derivative bounds. Writing $\zeta_i$ for the scaled coordinates of the arguments, equivariance under
$(\eps,\zeta_i)\mapsto(-\eps,-\zeta_i)$ gives the full-disc assertion.
\end{proof}

Recall the conifold-polarized free energy
\[
F_g^c(s,t)=
\frac{1}{2-2g}
\sum_{a\in\Ram(x_f)}
\operatorname*{Res}_{p=a}
\Phi_a(p)\,\omega_{g,1;t,s}^c(p),
\qquad
d\Phi_a=\omega_{0,1;t,s},
\]
of \eqref{eq:main-free-energy}, where $\Phi_a$ is a local primitive of the initial
form at the ramification point $a$; by Lemma~\ref{lem:local-parity}, the
residues do not depend on the choice of additive constants. The correlators
estimated in this subsection are the conifold-polarized correlators,
$\omega_{g,1;t,s}=\omega_{g,1;t,s}^c$, since the recursion runs on the
conifold-adapted $A$-normalized bidifferential $B_t$. For the unit
exact-period Gaussian data, put
\begin{equation}\label{eq:gaussian-free-energy}
F_g^G
=
\frac{1}{2-2g}
\sum_{\alpha\in\{1,-1\}}
\operatorname*{Res}_{\zeta=\alpha}
\Phi_G(\zeta)\,\omega_{g,1}^G(\zeta),
\end{equation}
with the local primitives $\Phi_G$ fixed above.

\begin{thm}[Exact-period Gaussian leading limit]\label{thm:gaussian-limit}
Fix an integer $g\geq2$. On the quadratic cover
$t=\eps^2c_{\rm neck}(\eps^2,s)$,
\begin{equation}\label{eq:gaussian-leading}
t^{2g-2}F_g^c(s,t)
=
F_g^G+O(\eps)
\end{equation}
as $\eps\to0$, locally uniformly in $s$. In particular
\[
\lim_{\eps\to0}t^{2g-2}F_g^c(s,t)=F_g^G,
\]
and the limit is independent of all spectator moduli.
\end{thm}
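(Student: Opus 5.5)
The plan is to split the residue sum defining $F_g^c$ into its two cold terms at $a_\pm$ and its finitely many hot terms, and to show that after multiplication by $t^{2g-2}$ the hot terms are $O(\eps)$ while the cold pair converges to $F_g^G$ with error $O(\eps)$. The main tools are Proposition~\ref{prop:mixed-scaled} with $n=1$, $k=2g-1$, and the exact Gaussian decomposition of Proposition~\ref{thm:period-split}. Lemma~\ref{lem:local-parity} lets us choose the additive constants of the local primitives $\Phi_a$ freely, since $\omega_{g,1}^c$ has zero residue at every ramification point.

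\emph{Hot terms.} At a hot ramification section $a$, I will replace the residue by an integral over a fixed circle $\Gamma_a$ in the coordinate $\xi_a$. On this circle $\Phi_a$ is $O(1)$, because $\omega_{0,1}$ varies holomorphically on the hot region and is normalized to vanish at $a$. The correlator $\omega_{g,1}^c$ is $O(1)$ there by Theorem~\ref{thm:exterior-bound}. Hence each hot term is $O(1)$, and $t^{2g-2}\cdot O(1)=O(\eps^{4g-4})=O(\eps)$ for $g\geq2$.

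\emph{Cold pair.} At $a_\pm$ I will use the primitive $\Phi_{a_\pm}$ normalized to vanish at the cold point. Pulled back by $\Phi_\eps$, it splits as follows. The deck-odd part of $\omega_{0,1}$ is $\omega_G(t)+R_\omega$, and Proposition~\ref{thm:period-split} gives $\Phi_\eps^*\omega_G(t)=t\,\overline\omega_{0,1}^{\,G}$ and $\Phi_\eps^*R_\omega=\eps^3\Delta_\omega$. The deck-even part $\eta=e(u)\chi_u\,du$ pulls back to a $\sigma$-invariant holomorphic form. Its primitive, normalized to vanish at the cold point, is $\sigma$-invariant and holomorphic near $\zeta=\pm1$, so Lemma~\ref{lem:local-parity} kills its residue against $\omega_{g,1}^c$. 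The cold residues are therefore those of
\[
\bigl(t\Phi_G+\eps^3\Psi\bigr)\,\Phi_\eps^*\omega_{g,1}^c ,
\]
where $\Psi$ is a bounded primitive of $\Delta_\omega$ on a fixed scaled compact set. I will move each residue to a small fixed circle $|\zeta\mp1|=r$ in $G$. There, \eqref{eq:scaled-correlator} gives $\Phi_\eps^*\omega_{g,1}^c=t^{-(2g-1)}\bigl(\omega_{g,1}^G+O(\eps)\bigr)$. Multiplying by $t^{2g-2}/(2-2g)$ yields
\[
\frac{1}{2-2g}\sum_\pm\operatorname*{Res}\Phi_G\,\omega_{g,1}^G+O(\eps)+O(\eps^3/t),
\]
and $\eps^3/t=O(\eps)$. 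This is $F_g^G+O(\eps)$.

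\emph{Main obstacle.} The delicate step is the bookkeeping of the primitive. The normalizing constants of $\Phi_{a_\pm}$ differ from the Gaussian normalization, and the even part $\eta$ is only $O(\eps^2)$ after pullback, not $O(t\eps)$. So rather than estimating the even part, I must discard it exactly by parity, which requires checking that its primitive is genuinely $\sigma$-invariant near each cold point. This is a local check in $u$: $\eta$ is a function of $u$ times $du$ and $u$ is deck-invariant. The odd part also needs care, since its primitive involves the zero-$\gamma_G$-period correction $\Delta_\omega$; that zero period is what makes $\Psi$ single-valued on the residue circles. Local uniformity in $s$ follows from the uniformity of all cited estimates. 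Spectator independence of the limit is immediate because $F_g^G$ is a constant, and equivariance under $\eps\mapsto-\eps$ gives the full punctured disc.
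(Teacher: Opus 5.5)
Your proposal is correct and follows the paper's own proof essentially step for step. You bound the hot residues by Theorem~\ref{thm:exterior-bound} and absorb them with the factor $t^{2g-2}=O(\eps)$, discard the deck-even primitive exactly by Lemma~\ref{lem:local-parity}, and split the pulled-back odd primitive as $t\Phi_G+\eps^3\Delta_\Phi$, pairing it with \eqref{eq:scaled-correlator} on fixed circles around $\zeta=\pm1$. Two harmless slips occur only in your commentary: the pulled-back even part is in general $O(\eps)$ rather than $O(\eps^2)$, which only reinforces that it must be removed exactly by parity; and your $\Psi$ is single-valued on the circles $|\zeta\mp1|=r$ simply because $\Delta_\omega$ is holomorphic on the discs they bound, so the vanishing $\gamma_G$-period plays no role there.
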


\begin{proof}
\emph{Step 1: the hot residues.} The ramification set consists of the fixed hot sections and the two cold points; we treat them separately. Around every hot section choose a fixed small coordinate circle. The initial form has a local primitive, chosen to vanish at the section and holomorphic in the curve point, $t$, and $s$; it is uniformly $O(1)$ on these circles. Theorem~\ref{thm:exterior-bound} gives
\[
\omega_{g,1;t,s}=O(1)
\]
there. Thus the sum of the hot free-energy residues is $O(1)$, and
\begin{equation}\label{eq:hot-free-energy}
t^{2g-2}
\sum_{a\ {\rm hot}}
\operatorname*{Res}_{p=a}
\Phi_a\,\omega_{g,1;t,s}
=
O(|t|^{2g-2})
=
O(\eps),
\end{equation}

\emph{Step 2: the odd part at the cold points.} Near either cold ramification point, decompose the initial form into its
deck-even and deck-odd parts,
\[
\omega_{0,1;t,s}
=
\omega_{\rm even}+\omega_{\rm odd},
\qquad
\omega_{\rm even/odd}
=
\frac{\omega_{0,1;t,s}\pm\sigma^*\omega_{0,1;t,s}}{2},
\]
matching Proposition~\ref{thm:normal-form}.
Choose primitives vanishing at the ramification point. The primitive of the even part is invariant under the local involution. Lemma~\ref{lem:local-parity} gives
\begin{equation}\label{eq:even-primitive-zero}
\operatorname*{Res}_{p=a}
\Phi_{\rm even}\,\omega_{g,1;t,s}(p)=0.
\end{equation}
Hence only the primitive of $\omega_{\rm odd}$ contributes.

\emph{Step 3: the scaled comparison.} We pull back by $\Phi_\eps$ and compute the two cold residues on fixed cold residue circles around $\zeta=1$ and $\zeta=-1$. Proposition~\ref{thm:period-split} and Proposition~\ref{prop:regime-atlas} give
\[
\Phi_\eps^*\omega_{\rm odd}
=
t\,\overline\omega_{0,1}^{\,G}
+
\eps^3\Delta_\omega,
\qquad
t=\eps^2c_{\rm neck}(\eps^2,s),
\]
where $\Delta_\omega$ is holomorphic and uniformly bounded on the two closed discs bounded by these circles, fixed compact subsets of $G$ away from $\zeta=0$ and $\zeta=\infty$. Normalizing the pulled-back primitives to vanish at the corresponding ramification point gives
\begin{equation}\label{eq:odd-primitive}
\Phi_\eps^*\Phi_{\rm odd}
=
t\Phi_G+\eps^3\Delta_\Phi,
\end{equation}
with $\Delta_\Phi$ uniformly bounded.

\emph{Step 4: the cold Gaussian limit.} For type $(g,1)$, the complexity is $2g-1$. Proposition~\ref{prop:mixed-scaled} gives
\begin{equation}\label{eq:scaled-one-point}
t^{2g-1}\Phi_\eps^*\omega_{g,1;t,s}
=
\omega_{g,1}^G+O(\eps)
\end{equation}
on each fixed cold residue circle. Combining
\eqref{eq:odd-primitive} and \eqref{eq:scaled-one-point},
\begin{align*}
&t^{2g-2}
(\Phi_\eps^*\Phi_{\rm odd})
(\Phi_\eps^*\omega_{g,1;t,s})\\
&\qquad=
\left(
\Phi_G+\frac{\eps^3}{t}\Delta_\Phi
\right)
\left(
\omega_{g,1}^G+O(\eps)
\right)
=
\Phi_G\omega_{g,1}^G+O(\eps),
\end{align*}
because
\[
\frac{\eps^3}{t}
=
\frac{\eps}{c_{\rm neck}(\eps^2,s)}
=
O(\eps).
\]
Taking residues at the two cold points gives
\begin{align}
&t^{2g-2}
\sum_{a\ {\rm cold}}
\operatorname*{Res}_{p=a}
\Phi_a\,\omega_{g,1;t,s}
\nonumber\\
&\qquad=
\sum_{\alpha\in\{1,-1\}}
\operatorname*{Res}_{\zeta=\alpha}
\Phi_G\omega_{g,1}^G
+
O(\eps).
\label{eq:cold-free-energy}
\end{align}
Equations \eqref{eq:hot-free-energy}, \eqref{eq:even-primitive-zero}, and
\eqref{eq:cold-free-energy}, divided by $2-2g$, prove
\eqref{eq:gaussian-leading}. The Gaussian data contain no spectator moduli, so the limit is independent of $s$.
\end{proof}

\section{Special geometry and the bounded Euler defect}\label{sec:defect}

This section proves the two exact identities whose difference is the Euler defect, and bounds the defect on the full punctured $t$-disc.

\subsection{Special geometry for the fixed-$x_f$ deformation}\label{sec:special-geometry}

Work in the toric one-node family, good framing, conifold-adapted marking, and
exact period of Propositions~\ref{thm:generic-framing}, \ref{thm:normal-form},
and \ref{prop:regime-atlas}, with $s$ the spectator moduli. The family
deformation is the motion of the exact period $t$ itself, evaluated at fixed
$x_f$, as opposed to a variation of filling fractions; its tangent form is the
\emph{deformation form}
\[
\Omega_t=(\partial_ty)_{x_f}\,dx_f.
\]
Its puncture residues need not be independent of $t$. At a puncture $p$ choose a parameter $z$ and write
\[
X=z^{a_p}U_X(z,t,s),\qquad
Y=z^{b_p}U_Y(z,t,s),\qquad
r_p=a_p+fb_p\neq0,
\]
with $U_X$ and $U_Y$ holomorphic units; here $r_p$ is the residue of $dx_f$
at $p$, nonzero by Proposition~\ref{thm:generic-framing}. After choosing a
holomorphic branch of $(U_XU_Y^f)^{1/r_p}$, set
\[
\xi=z(U_XU_Y^f)^{1/r_p}.
\]
Then
\[
x_f=r_p\log\xi,\qquad
y=b_p\log\xi+\log V,\qquad
V=U_Y(U_XU_Y^f)^{-b_p/r_p},
\]
where the units on the right are expressed in the $\xi$-coordinate. Fixed-$x_f$ differentiation is fixed-$\xi$ differentiation, and hence
\[
\Omega_t
=
r_p\,\partial_t\log V(\xi,t,s)\frac{d\xi}{\xi}.
\]
Thus $\Omega_t$ has at most a simple pole at each puncture, with no
higher-order polar part, and residue
\[
\rho_p
=
a_p\,\partial_t\log U_Y(0,t,s)
-b_p\,\partial_t\log U_X(0,t,s).
\]
The framing does not enter this formula: only $a_p$, $b_p$, and the units
occur. It is also coordinate independent: under $z'=c(t,s)z+O(z^2)$ the
units acquire the factors $c^{-a_p}$ and $c^{-b_p}$ at the puncture, so the
two terms change by $-a_pb_p\,\partial_t\log c$ and
$+a_pb_p\,\partial_t\log c$. The units vary holomorphically through $t=0$;
therefore every $\rho_p$ is holomorphic, and bounded locally uniformly in
$s$. Granted the holomorphy on the torus established below, the residue
theorem gives $\sum_p\rho_p=0$.

In the algebraic torus, write the family locally as
$\Ecal(x_f,y;t,s)=0$. Away from ramification,
\[
\Omega_t=-\frac{\Ecal_t}{\Ecal_y}\,dx_f,
\]
while at a ramification point smoothness gives $\Ecal_{x_f}\neq0$ and the same form is
\[
\Omega_t=\frac{\Ecal_t}{\Ecal_{x_f}}\,dy.
\]
The form $\Omega_t$ is therefore holomorphic at every point of the fiber
lying in the algebraic torus, including every ramification point.

Let a dot denote a Gauss--Manin lift of $\partial_t$. On every local logarithm branch, direct differentiation gives
\[
\dot\omega_{0,1}-\Omega_t
=
\dot y\,dx_f+y\,d\dot x_f
-\left(\dot y-\frac{dy}{dx_f}\dot x_f\right)dx_f
=
d(y\dot x_f).
\]
This identity is branchwise exact: because $y$ is logarithmic, the primitive
$y\dot x_f$ need not be globally single-valued. Radially displace the flatly
transported vanishing cycle $\gamma$ off the two cold ramification points
$\zeta=\pm1$. On the
displaced representative the fixed-$x_f$ lifts are regular and patch by
parameter-independent translations on overlaps. On each arc use the fixed-$x_f$ lift, in which $\dot x_f=0$ and the identity
reads $\dot\omega_{0,1}=\Omega_t$; since the charts differ on overlaps by
parameter-independent translations, the arcwise lifts glue, and summing over
the arcs identifies the $\Omega_t$-period with the $t$-derivative of the
$\omega_{0,1}$-period. Both periods are unchanged
under the radial displacement: for $\Omega_t$ by the holomorphy established
above, and for $\omega_{0,1}$ because the neck lies in a torus chart carrying
single-valued logarithms. The period normalization
$t=c_{\rm per}\int_\gamma\omega_{0,1}$ then gives
\[
\int_\gamma\Omega_t=\frac1{c_{\rm per}}.
\]

The normalization above yields an explicit decomposition of $\Omega_t$ into
generalized cycles; recall that a generalized cycle is a finite formal sum of
cycles and of paths joining punctures, with coefficients, integrated in the
second variable of $B_t$.

\begin{prop}[Generalized-cycle decomposition of the deformation form]\label{prop:gamma-decomposition}
There are generalized cycles $\Gamma_{\rm neck}$ and $\Gamma_{\rm hot}$,
constructed in the proof, with
\begin{equation}\label{eq:physical-tangent}
\Omega_t
=
\int_{\Gamma_{\rm neck}}B_t
+
\int_{\Gamma_{\rm hot}}B_t.
\end{equation}
The supports of $\Gamma_{\rm hot}$ may be chosen away from the neck and the
ramification points, and all coefficients are bounded locally uniformly in
$s$. The generalized cycle $\Gamma_{\rm hot}$ carries the variable third-kind
coefficients and all remaining holomorphic coefficients. Write
$\Gamma_t=\Gamma_{\rm neck}+\Gamma_{\rm hot}$.
\end{prop}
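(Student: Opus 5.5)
The plan is to use the standard representation of a meromorphic differential on a compact curve by its principal parts and periods. Write $\Omega_t$ on the compactified smooth fiber. By the computation preceding the statement, $\Omega_t$ is holomorphic on the torus part of the fiber, including the ramification points, and has at most simple poles at the punctures. Its residues are $\rho_p$, which are holomorphic and bounded locally uniformly in $s$, with $\sum_p\rho_p=0$. Hence $\Omega_t$ is a differential of the third kind plus a holomorphic differential. For the third-kind part, I choose a base puncture $p_*$ on each connected component. In the separating case the residues sum to zero componentwise: the neck contributes no pole, and the residue theorem holds on each capped component up to the neck period, which I handle below. Then form the path chain $\Gamma_{\rm res}=\sum_p\rho_p\,[p_*\to p]$, with paths chosen in a fixed compact subset of the hot region disjoint from the neck, the ramification discs and the other punctures. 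Since $B_t$ has a double pole with zero residue, $\int_{[p_*\to p]}B_t(\cdot,\xi)$ has simple poles at $p$ and $p_*$ with residues $\pm1$. Therefore $\Omega_t-\int_{\Gamma_{\rm res}}B_t$ is holomorphic on the whole fiber.

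A holomorphic differential is determined by its $A$-periods through the identity $\oint_{B_j}B_t(\cdot,\xi)=2\pi i\,\omega_j$ for the $A$-normalized basis. The $A$-periods of $\int_{\Gamma}B_t$ vanish by normalization. So the holomorphic remainder equals $\frac1{2\pi i}\sum_j\bigl(\oint_{A_j}\Omega_t\bigr)\oint_{B_j}B_t$. I split this sum according to the conifold-adapted marking. In the nonseparating case, $A_1=\gamma$ with $\oint_\gamma\Omega_t=1/c_{\rm per}$ by the period computation above, and $B_1$ is represented by $\delta_{\rm neck}$ completed in the hot region. I define $\Gamma_{\rm neck}=\frac{1}{2\pi i c_{\rm per}}\,B_1$, with constant bounded coefficient, and put every remaining term into $\Gamma_{\rm hot}$, that is, $\frac1{2\pi i}\oint_{A_j}\Omega_t\,B_j$ for $j\geq2$ together with $\Gamma_{\rm res}$. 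Here the $A_j,B_j$ for $j\ge2$ are chosen disjoint from the node charts, as in Corollary~\ref{cor:toric-sewing}. In the separating case the neck contributes instead through the relative generalized cycle $\delta_{\rm neck}$ with coefficient $\frac1{2\pi i c_{\rm per}}$ times the $\gamma$-period. That cycle's integral against $B_t$ produces exactly the residue mismatch on each component, and the componentwise $A$-markings give the rest of $\Gamma_{\rm hot}$.

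It remains to bound the coefficients. The residues $\rho_p$ are bounded by the puncture computation. The periods $\oint_{A_j}\Omega_t$ for $j\ge2$ are integrals over cycles in a fixed hot compact region. There $\Omega_t=-(\Ecal_t/\Ecal_y)\,dx_f$ depends holomorphically on $(t,s)$ through $t=0$, because away from the node the family is a holomorphic submersion and $\Ecal$ is holomorphic in $t$. So these periods are bounded locally uniformly in $s$. Finally, I verify the identity \eqref{eq:physical-tangent} by uniqueness: both sides have the same poles and residues at the punctures, and the same $A$-periods, namely $1/c_{\rm per}$ on $\gamma$ and the prescribed values elsewhere.

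The main obstacle is the separating case. There $\gamma$ is null-homologous, so it carries no $A$-period, and the neck contribution must be identified through a relative cycle. I would first check carefully that the componentwise residue sums, which need not vanish individually, are exactly compensated by $\frac{1}{c_{\rm per}}$ times the $\delta_{\rm neck}$ term, using Stokes on each side of $\gamma$ and $\oint_\gamma\Omega_t=1/c_{\rm per}$.
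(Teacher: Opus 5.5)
Your route is the paper's. You carry the puncture residues $\rho_p$ by third-kind integrals $\int_{p_*}^{p}B_t$ along hot paths, expand the holomorphic remainder in $A$-normalized forms written as $\frac{1}{2\pi i}\int_{\beta_j}B_t$, and conclude by uniqueness. The coefficients are bounded by the puncture calculation and by holomorphy of $\Omega_t$ on fixed hot representatives. In the nonseparating case this is exactly the paper's proof, with $\Gamma_{\rm neck}=\frac{1}{2\pi i c_{\rm per}}\beta_1$ and all other terms placed in $\Gamma_{\rm hot}$.

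The separating case does not close as written, and the step you defer is precisely what the paper's proof supplies.

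First, take a base puncture on each side of $\gamma$ and paths avoiding the neck. Then $\Omega_t-\int_{\Gamma_{\rm res}}B_t$ is \emph{not} holomorphic. Let $C_L$ be the side of which $\gamma$ is the positively oriented boundary and $C_R$ the other side. The residue theorem together with $\oint_\gamma\Omega_t=1/c_{\rm per}$ gives $\sum_{p\in C_L}\rho_p=\frac{1}{2\pi i c_{\rm per}}=-\sum_{p\in C_R}\rho_p$. So the remainder keeps simple poles of residue $\pm\frac{1}{2\pi i c_{\rm per}}$ at the two base punctures. This nonvanishing is also what guarantees that each side carries a puncture at all, which you assumed.

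Second, the bare cross-cut $\delta_{\rm neck}$ cannot absorb this mismatch. The form $\int_{\delta_{\rm neck}}B_t$ has its simple poles at the endpoints of $\delta_{\rm neck}$ on the two plumbing circles, which lie in the torus, not at punctures. It would therefore introduce poles that $\Omega_t$ does not have.

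The fix is the paper's construction:
\begin{enumerate}
\item Complete $\delta_{\rm neck}$ by hot paths to a path $\ell_{\rm sep}$ from a puncture $p_R\in C_R$ to a puncture $p_L\in C_L$, crossing the neck along $\delta_{\rm neck}$, so that $\oint_\gamma\int_{\ell_{\rm sep}}B_t=2\pi i$.
\item Take $\Gamma_{\rm neck}=\frac{1}{2\pi i c_{\rm per}}\ell_{\rm sep}$. The coefficient is $\frac{1}{2\pi i}\oint_\gamma\Omega_t$; your two statements of it are off by factors of $c_{\rm per}$ and $2\pi i$.
\end{enumerate}
After subtracting this term, the adjusted residues sum to zero on each side separately. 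Your within-side paths and the componentwise dual cycles then finish the argument as in the paper.
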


\begin{proof}
In the nonseparating case take $A_1=\gamma$; let $\nu_1,\ldots,\nu_{g_C}$ be
the holomorphic one-forms normalized by $\int_{A_i}\nu_j=\delta_{ij}$, where
$g_C$ is the fiber genus, and let $\beta_i$ be the dual cycle of $A_i$.
Choose a reference puncture $p_{\rm ref}$ and, for each puncture
$p\neq p_{\rm ref}$, a path $\ell_{p_{\rm ref},p}$ from $p_{\rm ref}$ to $p$;
let $dS_{p,p_{\rm ref}}=\int_{p_{\rm ref}}^{p}B_t$. Set
$h_i=\int_{A_i}\Omega_t$. Then
\[
\Omega_t
=
\frac1{c_{\rm per}}\nu_1
+\sum_{i=2}^{g_C}h_i\nu_i
+\sum_{p\neq p_{\rm ref}}\rho_p\,dS_{p,p_{\rm ref}}:
\]
the two sides have the same simple poles with the same residues and the same
$A$-periods, so their difference is a holomorphic form with vanishing
$A$-periods, hence zero. Equivalently,
\[
\Gamma_{\rm neck}
=
\frac{1}{2\pi i c_{\rm per}}\beta_1,
\qquad
\Gamma_{\rm hot}
=
\sum_{i=2}^{g_C}\frac{h_i}{2\pi i}\beta_i
+\sum_{p\neq p_{\rm ref}}\rho_p\,\ell_{p_{\rm ref},p}.
\]
All paths and cycles in $\Gamma_{\rm hot}$ may be chosen away from the neck
and the ramification points; the residue coefficients are bounded by the
puncture calculation, and the coefficients $h_i$ are bounded because
$\Omega_t$ is holomorphic through $t=0$ on fixed hot representatives.

In the separating case, let $C_L$ be the side of which $\gamma$, with its
fixed orientation, is the positively oriented boundary, and let $C_R$ be the
other side. The residue theorem and the period normalization give
\[
\sum_{p\in C_L}\rho_p=\frac{1}{2\pi i c_{\rm per}},
\qquad
\sum_{p\in C_R}\rho_p=-\frac{1}{2\pi i c_{\rm per}}.
\]
Each sum is nonzero, so each side carries a puncture. Choose $p_L\in C_L$ and
$p_R\in C_R$, and a path $\ell_{\rm sep}$ from $p_R$ to $p_L$ traversing the
neck along the cross-cut $\delta_{\rm neck}$ of
Proposition~\ref{prop:regime-atlas}; since $\gamma$ bounds $C_L$,
\[
\int_\gamma\int_{\ell_{\rm sep}}B_t=2\pi i.
\]
Set
\[
\Gamma_{\rm neck}
=
\frac{1}{2\pi i c_{\rm per}}\ell_{\rm sep}.
\]
After subtracting $\int_{\Gamma_{\rm neck}}B_t$ from $\Omega_t$, the adjusted
residues are
\[
\widetilde\rho_{p_L}
=
\rho_{p_L}-\frac{1}{2\pi i c_{\rm per}},
\qquad
\widetilde\rho_{p_R}
=
\rho_{p_R}+\frac{1}{2\pi i c_{\rm per}},
\]
with all other residues unchanged, and their sums vanish separately on the
two sides. Carry them by paths joining punctures within one side, and add
the componentwise dual cycles with coefficients $1/(2\pi i)$ times the
corresponding componentwise $A$-periods of
$\Omega_t-\int_{\Gamma_{\rm neck}}B_t$; this produces $\Gamma_{\rm hot}$.
Its residue coefficients are bounded by the puncture calculation, and its
cycle coefficients are bounded because $\Omega_t$ is holomorphic on fixed
hot representatives.
\end{proof}

Write $\nabla_t^{x_f}$ for the $t$-derivative taken with all external points
transported at fixed $x_f$; the subscript on a generalized-cycle integral
records the variable of integration. We suppress $(t,s)$ from the correlator
subscripts when no confusion can arise.

\begin{thm}[Exact special geometry for the family deformation]\label{thm:physical-special-geometry}
Let $U$ be a simply connected marked punctured neighborhood in the exact
period $t$, and let $\omega_{g,n}^c$ and $F_g^c$ be the conifold-polarized
correlators and free energies. For every stable type $(g,n)$ with $n\geq1$,
\[
\nabla_t^{x_f}\omega_{g,n}^c
=
\int_{\Gamma_{\rm neck}(q)}\omega_{g,n+1}^c(\cdot,q)
+
\int_{\Gamma_{\rm hot}(q)}\omega_{g,n+1}^c(\cdot,q),
\]
the external points being transported at fixed $x_f$. For every $g\geq2$,
\begin{equation}\label{eq:physical-special-geometry}
\partial_tF_g^c
=
I_g^{\rm neck}+R_g,
\qquad
I_g^{\rm neck}
:=
\int_{\Gamma_{\rm neck}}\omega_{g,1}^c,
\qquad
R_g
:=
\int_{\Gamma_{\rm hot}}\omega_{g,1}^c,
\end{equation}
and
\[
R_g=O(1)
\]
locally uniformly in $s$. In the separating case, $I_g^{\rm neck}$ is the
integral of $\omega_{g,1}^c$ along the relative third-kind path crossing the
neck; $R_g$ is a finite sum of integrals of $\omega_{g,1}^c$ over the fixed
hot paths and cycles of Proposition~\ref{prop:gamma-decomposition}, with
locally uniformly bounded coefficients.
\end{thm}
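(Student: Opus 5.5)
The plan is to follow the standard Eynard--Orantin variational argument: prove the correlator identity by induction on complexity, using the decomposition of Proposition~\ref{prop:gamma-decomposition} as the base case, and then pass to free energies through the dilaton-type formula defining $F_g^c$. The hot residual $R_g$ will then be bounded directly by Theorem~\ref{thm:exterior-bound}.

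\textbf{Base case.} The input is the pair of unstable variations:
\[
\nabla_t^{x_f}\omega_{0,1}=\Omega_t=\int_{\Gamma_t}B_t,\qquad
\nabla_t^{x_f}B_t=\int_{\Gamma_t}\omega_{0,3}^c.
\]
The first is the branchwise exactness computation preceding Proposition~\ref{prop:gamma-decomposition} combined with \eqref{eq:physical-tangent}. The second is the Rauch variational formula for the $A$-normalized bidifferential. This formula applies because the conifold-adapted marking is locally constant on the simply connected chart $U$, and because $\Omega_t$ is holomorphic at every ramification point, which was shown above. Concretely, the variation of $B_t$ at fixed $x_f$ is the sum over ramification points of residues of $B(\cdot,q)B(\cdot,q)\Omega_t(q)/(dx_f(q)dy(q))$. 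Substituting \eqref{eq:physical-tangent} and exchanging the residue with the generalized-cycle integral produces the $\omega_{0,3}$ integral.

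\textbf{Inductive step.} Differentiate \eqref{eq:EO-recursion} at fixed $x_f$. Since the recursion variable $q$ is also moved at fixed $x_f$, the deck involutions $\sigma_a$ and the residue contours are unaffected. Only three ingredients vary: $B_t$ in the kernel numerator, the denominator $\omega_{0,1}-\sigma_a^*\omega_{0,1}$, and the source correlators. Inserting the two base-case formulas and the induction hypothesis, the terms reassemble, exactly as in the proof of \cite[Theorem~5.1]{EO07}, into the recursion for $\int_{\Gamma_t}\omega_{g,n+1}^c(\cdot,q)$.

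Two points need care here.
\begin{enumerate}[label=\textup{(\roman*)}]
\item The generalized cycle $\Gamma_t$ must not meet any ramification point, so that integration commutes with the residues. The supports of $\Gamma_{\rm hot}$ avoid them by construction, and $\Gamma_{\rm neck}$ can be realized by $\beta_1$ or by $\ell_{\rm sep}$ crossing the neck along $\delta_{\rm neck}$ away from $\zeta=\pm1$.
\item The puncture endpoints of the third-kind paths require that stable correlators have no poles at punctures, which holds because punctures are not ramification points.
\end{enumerate}
Linearity in $\Gamma_t=\Gamma_{\rm neck}+\Gamma_{\rm hot}$ then splits the right-hand side into the two stated integrals.

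\textbf{Free energies.} For $g\geq2$, I would avoid differentiating the definition of $F_g^c$ directly. Instead I would use the inverse dilaton relation
\[
\sum_a\operatorname*{Res}\Phi_a\,\omega_{g,n+1}^c=(2g-2+n)\,\omega_{g,n}^c,
\]
valid for our ordinary recursion with no vital punctures. I would apply $\nabla_t^{x_f}$ to the definition of $F_g^c$ and trade $\partial_t\Phi_a$ for the primitive of $\Omega_t$ using $\dot\omega_{0,1}-\Omega_t=d(y\dot x_f)$. Commuting $\int_{\Gamma_t}$ with the residues then gives
\[
(2-2g)\,\partial_tF_g^c=\int_{\Gamma_t}\sum_a\operatorname*{Res}\Phi_a\,\omega_{g,2}^c=(2g-1)\int_{\Gamma_t}\omega_{g,1}^c-\int_{\Gamma_t}\omega_{g,1}^c\cdot(\ldots),
\]
that is, the standard manipulation yielding $\partial_tF_g=\int_{\Gamma_t}\omega_{g,1}$. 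Local constants in $\Phi_a$ are harmless by Lemma~\ref{lem:local-parity}.

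\textbf{The bound on $R_g$.} Set $R_g=\int_{\Gamma_{\rm hot}}\omega_{g,1}^c$, with coefficients bounded by Proposition~\ref{prop:gamma-decomposition}. The cycle pieces of $\Gamma_{\rm hot}$ lie in a fixed hot compact set, where Theorem~\ref{thm:exterior-bound} gives $\omega_{g,1}^c=O(1)$. The puncture paths are the delicate part, because a hot compact set must avoid the punctures. I would handle them as follows. Near a puncture, $\omega_{g,1}^c$ is holomorphic in the fixed coordinate $\xi$, with poles only at ramification points. The maximum principle on a fixed puncture disc then controls it by its values on the boundary circle, which is hot, so the $O(1)$ bound extends uniformly to the whole path.

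\textbf{Main obstacle.} I expect the main difficulty to be justifying the variational formula uniformly: the fixed-$x_f$ lifts near the cold points, and the interchange of the residue with integration over $\Gamma_{\rm neck}$ as the cold points collide. On a fixed chart in $U$ with $t\neq0$ this is only an exact identity, and I would rely on that exactness rather than on any estimate there.
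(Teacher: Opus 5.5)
Your route is the paper's: a base case from $\Omega_t=\int_{\Gamma_t}B_t$ and the Rauch formula, propagation through \eqref{eq:EO-recursion} by the Eynard--Orantin variational theorem with all supports kept off the ramification points, differentiation of the residue definition of $F_g^c$ followed by a dilaton identity, and the bound on $R_g$ from Theorem~\ref{thm:exterior-bound} plus the maximum principle on puncture discs (the paper's Step~7). The free-energy step, however, does not close as written, for two reasons.

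\emph{The dilaton sign.} With this paper's kernel \eqref{eq:EO-kernel} and prefactor $1/(2-2g)$, the dilaton coefficient is $2-2g-n$, not $2g-2+n$. Replacing $\omega_{0,1}$ by $\mu\,\omega_{0,1}$ multiplies $\omega^c_{g,n}$ by $\mu^{2-2g-n}$. The $\mu$-derivative at $\mu=1$ is the insertion $\sum_a\Res\Phi_a\,\omega^c_{g,n+1}$, because the deformation $\delta\omega_{0,1}=\omega_{0,1}$ has local primitives $\Phi_a$. Your coefficient belongs to the reversed kernel orientation, the one paired with the prefactor $1/(2g-2)$ in the calibration section.

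\emph{The dropped term.} You also omit the contribution of $\nabla_t^{x_f}\Phi_a=\Psi_a+\mathrm{const}$, $d\Psi_a=\Omega_t$. The generalized-cycle projection turns it into $\sum_a\Res\Psi_a\,\omega^c_{g,1}=\int_{\Gamma_t}\omega^c_{g,1}$. The correct count is
\[
(2-2g)\,\partial_tF_g^c=\int_{\Gamma_t}\omega^c_{g,1}+(1-2g)\int_{\Gamma_t}\omega^c_{g,1}=(2-2g)\int_{\Gamma_t}\omega^c_{g,1}.
\]
With your coefficient the two contributions add up to $2g\int_{\Gamma_t}\omega^c_{g,1}$, and your placeholder ``$(\ldots)$'' hides exactly this discrepancy.

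The root cause is mixing conventions, and it reaches your earlier steps too. You state the Rauch formula without its outer minus sign and run the induction ``exactly as in'' \cite[Theorem~5.1]{EO07}. In this paper,
\[
\nabla_t^{x_f}B_t=-\sum_a\Res\frac{\Omega_tB_tB_t}{dx_f\,dy},
\qquad
\omega^c_{0,3}=-\sum_a\Res\frac{B_tB_tB_t}{dx_f\,dy}.
\]
The paper imports the cited theorem only after two conversions: $\omega^c_{g,n}=(-1)^{2g-2+n}W^{\rm EO}_{g,n}$, because the cited kernel integrates along the reversed path, and $\Omega^{\rm EO}=-\Omega_t$. These two signs cancel. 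You need this bookkeeping, or an equivalent check such as the paper's $\mathbb P^1$ model, for the correlator identity to come out with a plus sign.

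Finally, your justification for Rauch (locally constant marking, $\Omega_t$ holomorphic at ramification points) ignores the punctures. There $x_f$ is logarithmic and $\Omega_t$ has simple poles, so you must show that fixed-$x_f$ transport is nonsingular across the puncture discs. The paper does this in Step~1: $x_f=r_p\log\xi$ with a fixed integer $r_p$, so fixing $x_f$ means fixing $\xi$. Without this, the Kodaira--Spencer representative, and hence $\nabla_t^{x_f}B_t$, could acquire puncture contributions.
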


\begin{proof}
\emph{Step 1: the intrinsic deformation form.} Fix the spectator moduli. For any local lift
$\mathcal V$ of $\partial_t$ to the total family, define
$\Omega_{\mathcal V}=\mathcal V(y)\,dx_f-\mathcal V(x_f)\,dy$ on a fiber.
This form is independent of the lift. Indeed, replacing $\mathcal V$ by
$\mathcal V+\mathcal W$ for a vertical vector field changes it by
$\mathcal W(y)\,dx_f-\mathcal W(x_f)\,dy$. Where $dx_f\neq0$, both
differentials are multiples of $dx_f$, and the chain rule makes this
difference zero. Holomorphic continuation makes it zero identically.
Where $x_f$ is a coordinate, the resulting intrinsic form is
$(\partial_ty)_{x_f}\,dx_f=\Omega_t$.

The branchwise identity established above says that
$\mathcal V(y\,dx_f)-\Omega_t=d(y\mathcal V(x_f))$ on every local logarithm
branch. Its primitive need not be globally single-valued, but in a
fixed-$x_f$ lift the correction vanishes. Thus $\Omega_t$ is the intrinsic
deformation form for the moving recursion datum.

Although $x_f$ is logarithmic, $dx_f=d\log X+f\,d\log Y$ is globally
meromorphic. Its period around any Gauss--Manin-flat closed cycle avoiding the punctures
is $2\pi i$ times the integer $w(X)+f\,w(Y)$, where $w(\cdot)$ denotes the
winding number of the corresponding torus coordinate along the cycle; this
integer is locally constant in the family. At a puncture, the coordinate constructed above satisfies
$x_f=r_p\log\xi$ with the fixed nonzero integer $r_p=a_p+fb_p$. Hence fixed-$x_f$
charts have parameter-independent translations on overlaps, and their
fixed-$\xi$ trivializations extend nonsingularly across every puncture disc.
The puncture trivializations therefore contribute zero to the \v{C}ech
representative of the Kodaira--Spencer class; these charts fail only at the
simple zeros of $dx_f$.

\emph{Step 2: the Rauch variational formula.} We next compute the variation
of $B_t$ under fixed-$x_f$ transport. The normalized bidifferential
satisfies the Rauch variational formula
\begin{equation}\label{eq:physical-rauch}
\nabla_t^{x_f}B_t(p,q)
=
-\sum_{a\in\Ram(x_f)}
\operatorname*{Res}_{r=a}
\frac{\Omega_t(r)B_t(r,p)B_t(r,q)}
{dx_f(r)\,dy(r)}.
\end{equation}
The deformation convention in \cite[eq.~(5-1)]{EO07} is
$\delta_{\Omega^{\rm EO}}y|_{x_f}\,dx_f=-\Omega^{\rm EO}$, so the family
deformation has $\Omega^{\rm EO}=-\Omega_t$.
The branch-value formula \cite[eq.~(5-3)]{EO07} has coefficient exactly
$\Omega^{\rm EO}(a)/dy(a)$ in front of the Schiffer variation. Since $B_t$ is
$A$-normalized, the auxiliary polarization matrix vanishes, and
\cite[eq.~(5-4)]{EO07} has the form displayed above.

To identify the local coefficient, choose a parameter-dependent coordinate $\xi_a$ at a
simple ramification section such that
$x_f=x_f(a(t))+\xi_a^2/2$. Fixed-$x_f$ transport has singular part
$(\partial_t\xi_a)_{x_f}=-\partial_t x_f(a(t))/\xi_a$. At the ramification
point, $dx_f=0$ and $dy\neq0$, so intrinsicity gives
$\Omega_t(a(t))=-\partial_t x_f(a(t))\,dy(a(t))$. The coordinate-change
field is therefore
$(\Omega_t(a)/dy(a))\xi_a^{-1}\partial_{\xi_a}$. Since
$dx_f=\xi_a\,d\xi_a$, pairing it against $B_t(r,p)B_t(r,q)$ in the Rauch
variational formula gives the signed residue in
\eqref{eq:physical-rauch}. Regular coordinate-change terms are \v{C}ech
coboundaries. The fixed-$x_f$ charts above trivialize the complement of the
ramification and puncture discs, so these polar fields represent the
deformation.

Choose transported $A$-cycle representatives away from the ramification
discs. Their fixed-$x_f$ transport is homologous to Gauss--Manin transport,
and the right side of \eqref{eq:physical-rauch} has zero $A$-period in either
external variable. Fixed-$x_f$ coordinates also preserve the diagonal
principal part away from the ramification points. The difference between
$\nabla_t^{x_f}B_t$ and the residue expression is consequently a
holomorphic bidifferential with all $A$-periods zero, and hence vanishes.

It remains to confirm the outer sign, which can be done on $\mathbb P^1$,
writing $x$ for the projection in the role of $x_f$; the fixed-$x$
derivative $\dot z\big|_x$ transports the point, so $\nabla_t^xB_t$ is the
transported derivative rather than the derivative at fixed $z$:
\begin{align*}
x(z)&=\frac{z^2}{2}+\lambda(t),
&
y(z)&=z,
&
B_t(z_1,z_2)&=\frac{dz_1\,dz_2}{(z_1-z_2)^2},
\\
\dot z\big|_x&=-\frac{\dot\lambda}{z},
&
\Omega_t&=-\dot\lambda\,dz,
&
\nabla_t^xB_t(z_1,z_2)
&=
\dot\lambda
\frac{dz_1\,dz_2}{z_1^2z_2^2},
\\
\operatorname*{Res}_{z=0}
\frac{\Omega_t(z)B_t(z,z_1)B_t(z,z_2)}
{dx(z)\,dy(z)}
&=
-\dot\lambda
\frac{dz_1\,dz_2}{z_1^2z_2^2}.
\end{align*}
The outer minus sign in \eqref{eq:physical-rauch} therefore gives the direct
fixed-$x$ derivative.

\emph{Step 3: the generalized-cycle projection.} We now connect this
deformation to the generalized-cycle decomposition
\eqref{eq:physical-tangent}. Let $\alpha$ be an $A$-normalized meromorphic
one-form whose poles lie at the ramification points and whose residues
vanish. Every stable correlator has these properties in each external
variable. The cross-sheet primitive $J_a$ gives $A$-normalization in the distinguished
variable, and symmetry gives it in all variables; ordinary recursion and
Lemma~\ref{lem:local-parity} give the pole and residue properties. Let $\Psi_a$ be a local primitive of the deformation form, $d\Psi_a=\Omega_t$
near $a$; the Riemann bilinear relation gives
$\int_{\Gamma_t}\alpha=\sum_{a\in\Ram(x_f)}\operatorname*{Res}_{p=a}\Psi_a(p)\alpha(p)$,
where the subscript on a generalized-cycle integral records the variable of
integration. For a weighted cycle summand $\beta_i$, the convention
$\int_{\beta_i(q)}B_t(p,q)=2\pi i\,\nu_i(p)$ gives exactly the corresponding
weighted $\beta_i$-period of $\alpha$. For a path from $p'$ to $p''$, the
normalized third-kind differential $\int_{p'}^{p''}B_t$ has residues $-1$ at
$p'$ and $+1$ at $p''$, and cutting the surface along the path, where
$\Psi_a$ becomes single-valued, gives $\int_{p'}^{p''}\alpha$ with the same
orientation.
Additive constants in $\Psi_a$ disappear because $\alpha$ has zero residues.
Moreover, a primitive of the local deck-even part of $\Omega_t$ is an
invariant holomorphic function, whose residue against a stable correlator
vanishes by Lemma~\ref{lem:local-parity}; only the local deck parity of
$\Omega_t$ enters.

\emph{Step 4: the $(0,2)$ insertion base.} This projection identity first
gives the insertion identity for the bidifferential.
In a ramification coordinate, write
$x_f=x_f(a)+\xi_a^2/2$,
$y=y(a)+y_1\xi_a+O(\xi_a^2)$, and, for the arguments
$(p_0,p_1,p_2)=(p,q,z)$ of \eqref{eq:physical-omega-zero-three},
$B_t(r,p_j)=(b_j+O(\xi_a))d\xi_a$ for $j=0,1,2$. The kernel $K_a$ of
\eqref{eq:EO-kernel} has leading term $b_0/(2y_1\xi_a\,d\xi_a)$. Evaluating
the second factor of each splitting term at $\sigma_a(r)$, that is, pulling
back by $\xi_a\mapsto-\xi_a$, makes the two source terms sum to
$-2b_1b_2(d\xi_a)^2+O(\xi_a)(d\xi_a)^2$. Hence
\begin{equation}\label{eq:physical-omega-zero-three}
\omega_{0,3;t}^c(p,q,z)
=
-\sum_a\operatorname*{Res}_{r=a}
\frac{B_t(r,p)B_t(r,q)B_t(r,z)}
{dx_f(r)\,dy(r)}.
\end{equation}
Substituting \eqref{eq:physical-tangent}, in the form
$\Omega_t=\int_{\Gamma_t}B_t$, into \eqref{eq:physical-rauch}, then
interchanging the finite generalized-cycle integral with the residues, the
supports being chosen away from the ramification discs as arranged below,
gives
$\nabla_t^{x_f}B_t(p,q)=\int_{\Gamma_t(z)}
\omega_{0,3;t}^c(p,q,z)$. This is the $(0,2)$ insertion base.

\emph{Step 5: from the Eynard--Orantin conventions to the correlator
identity.} It remains to propagate the identity through the recursion. Write
$W_{g,n;t}^{\rm EO}$ for the correlators of \cite{EO07} attached to the same
spectral data in their conventions. In \cite[Definition~4.1 and
footnote~3]{EO07} the kernel numerator is
$dE_r(p)=\frac12\int_r^{\sigma(r)}B_t(p,\cdot)$ and the denominator is
$(y(r)-y(\sigma(r)))\,dx_f(r)$; the numerator's integration path is opposite
to that of $K_a$ in \eqref{eq:EO-kernel}, the normalizations otherwise agree
(the factor $\tfrac12$ matches the cited kernel with the path reversed), and
the cited recursion carries no further sign at the residue. Consequently
$K_a$ is the negative of the \cite{EO07} kernel. If $k=2g-2+n$, in every
genus-reduction or splitting term the complexities of the factors sum to
$k-1$, so induction gives
\begin{equation}\label{eq:physical-eo-sign}
\omega_{g,n;t}^c=(-1)^{2g-2+n}W_{g,n;t}^{\rm EO}.
\end{equation}
At complexity one this agrees with
\eqref{eq:physical-omega-zero-three}.

After shrinking the base, choose all cycle and path representatives away
from the ramification set; the neck path can be perturbed inside the cold
annulus without changing the value of $\int_{\Gamma_{\rm neck}}$ on the forms
considered here. In the
$A$-normalized case, the same oriented cycle carries the representation
$\Omega^{\rm EO}=\int B_t$ and the correlator insertion in
\cite[Lemma~5.1, eqs.~(5-12)--(5-13), and Theorem~5.1, eq.~(5-14)]{EO07}. Applied to
$\Omega^{\rm EO}=\int_{-\Gamma_t}B_t$, these give
$\nabla_t^{x_f}W_{g,n;t}^{\rm EO}
=-\int_{\Gamma_t}W_{g,n+1;t}^{\rm EO}$.
Combining this with \eqref{eq:physical-eo-sign} proves the stated
correlator identity. It holds first for external points off the generalized-cycle supports; both
sides are meromorphic there, and the identity persists under continuation
across the supports. Moving residue points add no term: the residue is computed in a coordinate
centered on the moving section, and is therefore unaffected by its motion,
while the singular discrepancy between that transport and fixed-$x_f$
transport is the coordinate-change field already included in
\eqref{eq:physical-rauch}.

\emph{Step 6: logarithmic recursion and the free energy.} Proposition~\ref{thm:generic-framing} gives an empty vital set.
By \cite[Remark~2.3 and Definition~3.1]{HMO26}, logarithmic recursion then
equals ordinary recursion, and every additional free-energy term, being
indexed by a vital puncture, vanishes. Stable correlators have poles only
at ramification points, so the third-kind integrations converge at their
puncture endpoints. A logarithm-branch change $y\mapsto y+c_y$ adds the locally
deck-even form $c_y\,dx_f$ to $\omega_{0,1}$. It cancels from the recursion denominator, and its
primitive is an invariant function of $x_f$ whose residue against
$\omega_{g,1}^c$ vanishes by Lemma~\ref{lem:local-parity}.

For $g\geq2$, differentiate the residue definition of $F_g^c$, recalled at
the end of Section~\ref{sec:gaussian-limit}, at fixed $x_f$. Let $d\Phi_a=\omega_{0,1}$ and $d\Psi_a=\Omega_t$; then
$\nabla_t^{x_f}\Phi_a-\Psi_a$ is locally constant and contributes no residue.
Using the correlator insertion identity and the generalized-cycle projection
gives
\begin{align*}
(2-2g)\partial_tF_g^c
&=
\int_{\Gamma_t}\omega_{g,1;t}^c
+
\int_{\Gamma_t(q)}
\sum_a\operatorname*{Res}_{p=a}
\Phi_a(p)\omega_{g,2;t}^c(p,q),
\\
\sum_a\operatorname*{Res}_{p=a}
\Phi_a(p)\omega_{g,2;t}^c(p,q)
&=(1-2g)\omega_{g,1;t}^c(q),
\\
(2-2g)\partial_tF_g^c
&=(2-2g)\int_{\Gamma_t}\omega_{g,1;t}^c.
\end{align*}
For the identity in the second line, scaling $\omega_{0,1}$ by $\mu$ scales
the recursion kernel \eqref{eq:EO-kernel} by $\mu^{-1}$, and recursion
induction makes $\omega_{g,1}^c$ homogeneous of weight $1-2g$.
Differentiating the recursion in $\mu$ at $\mu=1$ realizes the scale
derivative as the rooted-residue insertion on the left side, and the
homogeneity evaluates it to the displayed coefficient $1-2g$. Since $g\geq2$, division proves
$\partial_tF_g^c=\int_{\Gamma_t}\omega_{g,1;t}^c$.

\emph{Step 7: assembly and the bound on $R_g$.} Linearity and
\eqref{eq:physical-tangent} now give
\eqref{eq:physical-special-geometry}. The identity is instantaneous: only
the value of $\Gamma_t$ at the given $t$ enters, and its coefficients and
supports are not differentiated. On the marked base the absolute $A$-cycles
are Gauss--Manin flat and $B_t$ is normalized on them, so no polarization
correction arises.
In the nonseparating case the vanishing cycle $\gamma$ is one of those cycles. In the
separating case the absolute normalization cycles remain on the component
sides, while $\Gamma_{\rm neck}$ is the relative path with coefficient
$1/(2\pi i c_{\rm per})$ in the orientation fixed above.

Finally, a stable one-point correlator can have poles only at ramification
points: after the recursion-variable residues are taken, its
distinguished-variable dependence occurs through the cross-sheet primitives
$J_a$, whose poles arise at the residue points. It is therefore holomorphic
on every fixed puncture disc. Theorem~\ref{thm:exterior-bound} bounds its
coefficient, taken with respect to the fixed puncture coordinate, on the
boundary circle of such a disc, and the maximum-modulus principle extends
the same bound over the closed disc, hence through the puncture endpoint.
The theorem also bounds the correlator along all fixed hot paths and cycles.
Since $\Gamma_{\rm hot}$ is a finite sum of these supports with the bounded
coefficients derived above, including the variable third-kind coefficients,
\[
R_g=O(1).
\]
\end{proof}

\subsection{Deck descent and the bounded Euler defect}\label{sec:euler-descent}

The preceding sections control the conifold-polarized correlators and free energies on the quadratic cover $t=\eps^2c_{\rm neck}(\eps^2,s)$. This subsection first descends those results to the $t$-disc itself and then bounds the \emph{Euler defect}, the failure of $F_g^c$ to satisfy Euler's identity for a function homogeneous of degree $2-2g$ in the exact period; boundedness of this defect is what Section~\ref{sec:assembly} integrates into the gap. Theorem~\ref{thm:deck-descent} proves the descent, Corollary~\ref{cor:gaussian-limit-disc} records the resulting Gaussian leading limit on the $t$-disc for the assembly in Section~\ref{sec:assembly}, Lemma~\ref{lem:cold-collar} converts the two cold free-energy residues into the neck integral of special geometry, and Theorem~\ref{thm:euler-defect} combines these into the bound.

Work in the toric one-node family, good framing, conifold-adapted marking, and exact period of Propositions~\ref{thm:generic-framing}, \ref{thm:normal-form}, and \ref{prop:regime-atlas}, with $s$ the spectator moduli, on the quadratic cover
\[
v^2=u^2-\eps^2,
\qquad
t=\eps^2c_{\rm neck}(\eps^2,s),
\]
with $c_{\rm neck}$ holomorphic and nonzero. Let $\mathfrak d$ denote the \emph{$\eps$-deck involution} $\eps\mapsto-\eps$ of this cover; since $t$ depends only on $\eps^2$, the pulled-back fibers over $\eps$ and $-\eps$ coincide, and $\mathfrak d$ acts through this identification. It exchanges the cold ramification sections $a_+$ and $a_-$, and it is distinct from the fiberwise deck involutions $\sigma_a$. Cover the punctured $\eps$-disc by simply connected charts $U_i$ from its universal cover, write $\mathcal C_i$ for the pulled-back local family over $U_i$, and let
\[
h_{ij}\colon\mathcal C_i\longrightarrow\mathcal C_j
\]
be the family-transition map on an overlap; the transition maps satisfy the cocycle condition on triple overlaps.

\begin{thm}[$\eps$-deck descent of conifold-polarized recursion]\label{thm:deck-descent}
Assume the toric one-node setting of Propositions~\ref{thm:generic-framing}, \ref{thm:normal-form}, and~\ref{prop:regime-atlas}, with good framing, conifold-adapted marking with its $A$-normalized bidifferential $B_t$, and exact period
\[
t=c_{\rm per}\int_\gamma\omega_{0,1;t,s},
\qquad c_{\rm per}\neq0,
\]
on the quadratic cover and chart cover fixed above. Then the pulled-back correlators and free energies are single-valued on the full punctured $\eps$-disc with respect to the family-transition cocycle $\{h_{ij}\}$: they agree after pullback by every $h_{ij}$, compatibly on triple overlaps, and, in particular, a full loop
\[
\eps\mapsto e^{2\pi i}\eps
\]
acts trivially. Moreover:
\begin{enumerate}[label=\textup{(\arabic*)}]
\item For every stable type $(g,n)$,
\begin{equation}\label{eq:deck-correlators}
(\mathfrak d^{\times n})^*
\omega_{g,n;-\eps,s}^c
=
\omega_{g,n;\eps,s}^c.
\end{equation}
\item For every stable one-point type $(g,1)$ and every adapted $A$-cycle $A_i$,
\begin{equation}\label{eq:A-period-vanishing}
\int_{A_i}\omega_{g,1;t,s}^c=0.
\end{equation}
\item For every fixed $g\geq2$, if $\widetilde F_g^c(s,\eps)$ is the pullback of the conifold-polarized free energy, then
\begin{equation}\label{eq:deck-free-energy}
\widetilde F_g^c(s,-\eps)
=
\widetilde F_g^c(s,\eps).
\end{equation}
\end{enumerate}
Consequently the correlators descend as relative meromorphic forms, fiberwise meromorphic and varying holomorphically with the parameters, and $F_g^c$ descends as a jointly holomorphic single-valued function of $(s,t)$ on a product of a spectator neighborhood with the full punctured $t$-disc, in both one-neck topologies, locally uniformly in $s$ after shrinking.
\end{thm}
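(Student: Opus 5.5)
The plan is to reduce everything to the observation that the recursion data $(C,x_f,\omega_{0,1},B_t)$ over $\eps$ and over $-\eps$ are literally the same spectral curve, and that over a loop in the punctured disc the data return to themselves up to a family-transition biholomorphism preserving the marking. First I would establish single-valuedness with respect to $\{h_{ij}\}$. On an overlap, $h_{ij}$ is a fiberwise biholomorphism commuting with $x_f$ (the Laurent polynomial $H$ depends only on $q=q(t,s)$, hence on $\eps^2$), and it pulls $\omega_{0,1}$ back to itself up to a logarithm branch change $y\mapsto y+c_y$, $x_f\mapsto x_f+c_x$ with constants. By Proposition~\ref{prop:regime-atlas}(6), the monodromy around $q=0$ fixes $\gamma$ and preserves the chosen $A$-polarization; the remaining marking cycles lie in the exterior and are locally constant. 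Hence $h_{ij}^*B_t=B_t$ by uniqueness of the $A$-normalized bidifferential. The ramification sets correspond and the deck involutions are intertwined. Additive constants in $x_f$ drop out of $dx_f$, and a shift $c_y\,dx_f$ cancels in the kernel denominator \eqref{eq:EO-kernel}. So an induction on $2g-2+n$ in \eqref{eq:EO-recursion} gives $h_{ij}^*\omega_{g,n}^c=\omega_{g,n}^c$. The cocycle condition on triple overlaps is inherited from that of $h_{ij}$.

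For (1) I would use the same argument with $\mathfrak d$ in place of $h_{ij}$: $\mathfrak d$ is the identity on fibers but swaps the labels $a_\pm$. Since \eqref{eq:EO-recursion} sums over all ramification points, the relabelling is invisible, and the induction gives \eqref{eq:deck-correlators}. The marking is unchanged because $\gamma=\Phi_\eps(\gamma_G)$ is independent of the sign of $\eps$ (proof of Proposition~\ref{thm:period-split}).

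For (2) I would use that the distinguished variable enters only through $J_a(p,\cdot)=\frac12\int B_t(p,\cdot)$, and that $\int_{A_i}B_t(p,\cdot)=0$ in $p$ by $A$-normalization. Integrating the residue formula over $A_i$ (kept away from ramification points) and exchanging the integral with the finite residue sum gives \eqref{eq:A-period-vanishing}. This is the same point used in Step~3 of Theorem~\ref{thm:physical-special-geometry}.

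For (3), the free energy is defined by a residue formula summing over all ramification points. The primitives $\Phi_a$ are ambiguous only by constants, and these are harmless by Lemma~\ref{lem:local-parity}. So \eqref{eq:deck-free-energy} follows from (1) together with the invariance of $\Ram(x_f)$ under $\mathfrak d$.

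Finally comes descent. Single-valuedness on the punctured $\eps$-disc together with evenness in $\eps$ gives a function of $\eps^2$. Since $t=\eps^2c_{\rm neck}(\eps^2,s)$ with $c_{\rm neck}$ nonvanishing, Corollary~\ref{cor:toric-sewing} lets us express $\eps^2=q(t,s)$ holomorphically on a full $t$-disc. Joint holomorphy in $(s,t)$ of $F_g^c$ away from $t=0$ follows from holomorphic dependence of $B_t$ (Theorem~\ref{thm:bergman-package}) and of the ramification sections, because residues on fixed circles of holomorphic families are holomorphic. Locally uniform shrinking comes from Proposition~\ref{thm:generic-framing}.

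I expect the main obstacle to be the monodromy step: showing that $h_{ij}$ preserves the conifold-adapted $A$-normalization. In the nonseparating case, Picard--Lefschetz changes the dual cycle $\delta$ by $\pm\gamma$ but fixes the $A$-cycles, so $B_t$ is preserved; I would state this explicitly. In the separating case, $\gamma$ is null-homologous and I would argue componentwise. The logarithmic branch ambiguity must also be checked to act only by additive constants.
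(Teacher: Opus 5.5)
Your proposal follows essentially the same route as the paper. The monodromy is a power of $T_\gamma$, which fixes every adapted $A$-cycle, so uniqueness of the $A$-normalized bidifferential gives invariance of $B_t$ under $h_{ij}$ and $\mathfrak d$. The shift $c_y\,dx_f$ cancels from the kernel denominator, a complexity induction gives the correlator identities, (2) comes from interchanging the $A_i$-period with $J_a$, and the descent goes through $q=\eps^2$ and $c_{\rm neck}\neq0$.

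Two points need a small repair.

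\emph{The free energy in (3).} Under the logarithm-branch change $y\mapsto y+c_y$ that you yourself identified, a local primitive $\Phi_a$ of $\omega_{0,1}$ changes by $c_yx_f+c_0$, not only by a constant. So for (3), and for the $h_{ij}$-invariance of $F_g^c$, you must apply Lemma~\ref{lem:local-parity} with the $\sigma_a$-invariant function $\varphi=x_f$ as well as with $\varphi=1$. This is how the paper argues.

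\emph{The period in (2).} The natural representative $\gamma=\Phi_\eps(\gamma_G)$ passes through the cold points $\zeta=\pm1$, so you must displace it off the residue discs. You then need the zero residues of $\omega_{g,1}^c$ at ramification points (again Lemma~\ref{lem:local-parity}) to conclude that the period over the displaced representative equals the period over the original adapted cycle.
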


\begin{proof}
\emph{Step 1: the capped family and its monodromy.} The capped-family construction in Theorem~\ref{thm:actual-yamada} extends across the full product coordinate
\[
q=\eps^2.
\]
Its inherited marking extends across the $q$-disc. Reconstructing the original fiber by
\[
z_+z_-=q
\]
shows that a positive loop of $q$ has only the annular Dehn twist $T_\gamma$ as monodromy. A full loop of $\eps$ therefore has monodromy $T_\gamma^2$.

In the nonseparating case, choose a symplectic basis on the capped surface, with representatives disjoint from both cap discs, and add the vanishing cycle $\gamma$ to its $A$-cycles. The twist fixes $\gamma$ and every inherited representative. In the separating case, choose componentwise symplectic bases away from the cap discs. The annular twist is the identity on all these representatives. Thus every adapted $A$-cycle is fixed in both topologies.

\emph{Step 2: invariance of the bidifferential.} The pulled-back fibers over $\eps$ and $-\eps$ coincide, while the marking continued along a half-loop of $\eps$, that is, a full loop of $q$, returns changed by the annular mapping class $T_\gamma$, which fixes every adapted $A$-cycle; the two parameters therefore induce the same adapted $A$-polarization. Pull the continued bidifferential back by $\mathfrak d$. It remains symmetric, has the same diagonal double pole with biresidue one, has no other pole, and has zero period over every adapted $A_i$ in either variable. These properties determine it uniquely. Indeed, the difference is a holomorphic bidifferential; fixing one argument gives a holomorphic one-form with all $A$-periods zero, hence zero. Therefore
\begin{equation}\label{eq:B-invariance}
(\mathfrak d\times\mathfrak d)^*B_{-\eps}=B_\eps.
\end{equation}
The same argument applies to every family-transition map whose annular mapping class is a power of $T_\gamma$.

\emph{Step 3: logarithm branches and the recursion kernel.} On a simply connected parameter chart, choose local logarithms. After continuation, their branches may return as
\[
x_f\longmapsto x_f+c_x,
\qquad
y\longmapsto y+c_y,
\qquad
c_x,c_y\in2\pi i\Z.
\]
The constants lie in $2\pi i\Z$ because the branches of $\log X$ and $\log Y$ return shifted by integer multiples of $2\pi i$ and the framing $f$ is an integer. Thus $dx_f$ is unchanged and
\[
\omega_{0,1}\longmapsto\omega_{0,1}+c_y\,dx_f.
\]
At a simple ramification point,
\[
x_f\circ\sigma_a=x_f,
\qquad
\sigma_a^*dx_f=dx_f.
\]
Hence the added form cancels from the recursion denominator:
\[
(\omega_{0,1}+c_y dx_f)
-
\sigma_a^*(\omega_{0,1}+c_y dx_f)
=
\omega_{0,1}-\sigma_a^*\omega_{0,1}.
\]
Equation \eqref{eq:B-invariance} identifies the cross-sheet primitive $J_a$, and the family identification conjugates each local involution to the continued involution. The recursion kernel is invariant.

\emph{Step 4: $\mathfrak d$-evenness of the correlators.} Induct on $2g-2+n$ in the complete recursion formula \eqref{eq:EO-recursion}:
\begin{align*}
\omega_{g,n}(p_0,\mathbf p)
={}&
\sum_{a\in\Ram(x_f)}
\operatorname*{Res}_{q=a}
K_a(p_0,q)
\bigg[
\mathbf 1_{g\geq1}\,\omega_{g-1,n+1}(q,\sigma_a(q),\mathbf p)\\
&+
\sum_{\substack{g_1+g_2=g\\I\sqcup J=\mathbf p}}^{\prime}
\omega_{g_1,|I|+1}(q,I)
\omega_{g_2,|J|+1}(\sigma_a(q),J)
\bigg].
\end{align*}
The base of the induction is the bidifferential, invariant by \eqref{eq:B-invariance}. The kernel is invariant, and the induction hypothesis identifies every stable source. Continuation permutes the finite ramification set, exchanging the two cold labels under $\mathfrak d$ and possibly permuting hot labels. Residues are invariant under biholomorphic coordinate changes, and their finite sum is invariant under permutation. This proves \eqref{eq:deck-correlators}.

\emph{Step 5: vanishing of the adapted $A$-periods.} To prove \eqref{eq:A-period-vanishing}, choose pairwise disjoint residue discs around the ramification points. Every adapted $A$-cycle has a representative disjoint from them. In the nonseparating case, displace $\gamma$ radially so that it misses the cold residue discs. For each ramification point, choose the path from $\sigma_a(p)$ to $p$ inside its residue disc. It is disjoint from every displaced $A_i$. The two contours are compact and disjoint and $B_t$ is continuous on their product, so the integrations interchange:
\[
\int_{A_i(p_0)}
\left(
\frac12\int_{\sigma_a(p)}^pB_t(p_0,\cdot)
\right)
=
\frac12\int_{\sigma_a(p)}^p
\left(
\int_{A_i(p_0)}B_t(p_0,\cdot)
\right)
=0.
\]
The denominator and recursion source are independent of $p_0$, so integrating the recursion formula over the displaced $A_i$ gives zero. A stable correlator, as a one-form in an external argument, has poles only at ramification points. Lemma~\ref{lem:local-parity} says that each such pole has zero residue. Moving the cycle across a ramification point therefore does not change its period. This proves \eqref{eq:A-period-vanishing} for the original adapted cycle.

\emph{Step 6: free energies, gluing, and descent.} For $g\geq2$,
\[
F_g^c
=
\frac{1}{2-2g}
\sum_{a\in\Ram(x_f)}
\operatorname*{Res}_{p=a}
\Phi_a(p)\omega_{g,1}^c(p),
\qquad
d\Phi_a=\omega_{0,1}.
\]
By \eqref{eq:deck-correlators} the one-point correlators agree, while $\mathfrak d$ permutes the ramification points. If no logarithm branch changes, centered local primitives prove invariance. Under the branch shift above, a local primitive changes by
\[
c_yx_f+c_0,
\]
with $c_0$ locally constant. Both $x_f$ and $1$ are invariant under the local involution. Lemma~\ref{lem:local-parity} gives
\[
\operatorname*{Res}_{p=a}
(c_yx_f+c_0)\omega_{g,1}^c(p)=0.
\]
Thus branch changes alter no summand, proving \eqref{eq:deck-free-energy}.

Consider the chart cover $\{U_i\}$ and the transition maps $h_{ij}$ fixed before the theorem. If two lifts differ by $N_{ij}$ turns, then
\[
[h_{ij}]=T_\gamma^{2N_{ij}},
\]
because the plumbing product is $\eps^2$. The logarithm branches satisfy
\[
h_{ij}^*x_{f,j}
=
x_{f,i}+2\pi i\,m^x_{ij},
\qquad
h_{ij}^*y_j
=
y_i+2\pi i\,m^y_{ij}
\]
for locally constant integers $m^x_{ij}$ and $m^y_{ij}$. The transition maps satisfy the cocycle condition on triple overlaps, and the branch shifts are additive there: $m^x_{ij}+m^x_{jk}=m^x_{ik}$ and $m^y_{ij}+m^y_{jk}=m^y_{ik}$.

Every adapted $A$-cycle is fixed by the transition. Uniqueness of the $A$-normalized bidifferential gives
\[
(h_{ij}\times h_{ij})^*B_j=B_i.
\]
The shift of $\omega_{0,1}$ cancels from the recursion denominator. Induction gives
\[
(h_{ij}^{\times n})^*
\omega_{g,n;j,s}^c
=
\omega_{g,n;i,s}^c.
\]
The local-parity residue argument gives
\[
F_{g,j}^c=F_{g,i}^c.
\]
These identities satisfy the cocycle condition on triple overlaps, so the local objects glue on the full punctured $\eps$-disc.

The $\eps$-deck involution $\mathfrak d$ commutes with the transition cocycle and exchanges $a_+$ and $a_-$. Equations \eqref{eq:deck-correlators} and \eqref{eq:deck-free-energy} say that the glued objects are $\mathfrak d$-invariant, so they descend to the punctured disc with coordinate
\[
q=\eps^2.
\]
Since
\[
t=q\,c_{\rm neck}(q,s),
\qquad
c_{\rm neck}(0,s)\neq0,
\]
the map $(q,s)\mapsto(t,s)$ is a biholomorphism onto its image after shrinking. This proves joint holomorphy and single-valuedness on the full punctured $t$-disc. A square-root branch is excluded by \eqref{eq:deck-free-energy}, and a logarithmic branch by the trivial action of a full loop.
\end{proof}

\begin{cor}[Gaussian leading limit on the $t$-disc]\label{cor:gaussian-limit-disc}
Fix an integer $g\geq2$. Then
\[
t^{2g-2}F_g^c(s,t)
=
F_g^G+O(|t|^{1/2})
\]
as $t\to0$ on the full punctured $t$-disc, locally uniformly in $s$. The left-hand side and $F_g^G$ are single-valued in $t$; $|t|^{1/2}$ records only the size of the remainder.
\end{cor}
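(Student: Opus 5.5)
The plan is to combine Theorem~\ref{thm:gaussian-limit}, which gives the estimate on the quadratic cover, with the descent of Theorem~\ref{thm:deck-descent}, which makes both sides single-valued functions of $t$. On the cover $t=\eps^2c_{\rm neck}(\eps^2,s)$, Theorem~\ref{thm:gaussian-limit} gives
\[
t^{2g-2}\widetilde F_g^c(s,\eps)=F_g^G+O(\eps),
\]
locally uniformly in $s$, where $\widetilde F_g^c$ is the pullback of $F_g^c$. By Theorem~\ref{thm:deck-descent}, $\widetilde F_g^c$ is even in $\eps$ and descends through $q=\eps^2$ and the biholomorphism $(q,s)\mapsto(t,s)$ to a single-valued holomorphic function $F_g^c(s,t)$ on the product of a spectator neighborhood with the punctured $t$-disc. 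Since $t$ itself is single-valued, the left-hand side $t^{2g-2}F_g^c(s,t)$ is a single-valued function of $(s,t)$. The constant $F_g^G$ is built from the fixed Gaussian data alone and has no dependence on $t$ or $s$.

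The steps, in order, are as follows. First, for a point $(s,t)$ with $0<|t|<\delta$, pick either preimage $\eps$ with $\eps^2=q(t,s)$. The $\mathfrak d$-invariance \eqref{eq:deck-free-energy} shows that the value of the left-hand side does not depend on this choice. Next, apply the cover estimate at that $\eps$. The remainder in Theorem~\ref{thm:gaussian-limit} is bounded by $C|\eps|$ with $C$ uniform on compact sets of $s$, and this bound is symmetric under $\eps\mapsto-\eps$, so it is insensitive to the choice of preimage. Finally, convert the size of the remainder using Proposition~\ref{prop:regime-atlas}(3): $|t|=|\eps|^2|c_{\rm neck}|$ with $|c_{\rm neck}|$ bounded above and below locally uniformly in $s$. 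Hence $|\eps|\le m^{-1/2}|t|^{1/2}$, where $m$ is the lower bound for $|c_{\rm neck}|$, and the remainder is $O(|t|^{1/2})$.

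The only point requiring care is that the $O(\eps)$ in Theorem~\ref{thm:gaussian-limit} is genuinely uniform. It must hold on the entire punctured $\eps$-disc, covering every argument of $\eps$ rather than a single sector, and it must be uniform in $s$ on compact sets. I would check this by tracing the constants back to Proposition~\ref{prop:mixed-scaled} and Theorem~\ref{thm:exterior-bound}. Both are stated on the full punctured disc and locally uniformly in $s$, by equivariance under $(\eps,\zeta)\mapsto(-\eps,-\zeta)$, so the uniformity passes through. The remainder then depends only on $|\eps|$, which is comparable to $|t|^{1/2}$. The function $|t|^{1/2}$ serves only as a size bound on a single-valued remainder, so no branch of the square root enters the statement.
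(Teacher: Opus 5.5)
Your proposal is correct and follows the paper's proof essentially step for step. Like the paper, you use Theorem~\ref{thm:deck-descent} for single-valuedness, Theorem~\ref{thm:gaussian-limit} for the estimate on the quadratic cover, and Proposition~\ref{prop:regime-atlas} to convert $O(\eps)$ into $O(|t|^{1/2})$, noting that both lifts of $t$ give the same value and the same error bound.
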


\begin{proof}
Theorem~\ref{thm:deck-descent} makes $F_g^c$ a jointly holomorphic single-valued function on the full punctured $t$-disc, and Theorem~\ref{thm:gaussian-limit} gives the expansion on the quadratic cover. Proposition~\ref{prop:regime-atlas} gives $|\eps|=O(|t|^{1/2})$ locally uniformly in $s$. Both lifts of a small nonzero $t$ give the same left side and the same Gaussian residue sum, while the error is bounded by a uniform multiple of $|t|^{1/2}$.
\end{proof}

Work in the cut annulus of Proposition~\ref{prop:regime-atlas}. Fix $g\geq2$, and put
\[
\alpha=\omega_{g,1;t,s}^c,
\qquad
P=\frac{t}{c_{\rm per}}.
\]
Let $\Phi_{\rm cut}$ be the cut-annulus primitive of $\omega_{0,1}$ fixed before that proposition, normalized at a fixed point of the outer boundary, and let $B_g^{\rm cold}$ be the unnormalized sum of the two cold free-energy residues. In the nonseparating case, let $L_{\rm neck}$ be the adapted $B$-cycle, the completion of the cross-cut $\delta_{\rm neck}$ by marked paths in the fixed hot region (Proposition~\ref{prop:regime-atlas}); in the separating case, let $L_{\rm neck}=\ell_{\rm sep}$ be the path of Proposition~\ref{prop:gamma-decomposition}, whose annular segment traverses the neck along the outer-to-inner cross-cut $\delta_{\rm neck}$, consistently with the convention $\gamma\cdot\delta_{\rm neck}=+1$ of Proposition~\ref{prop:regime-atlas}. In both cases set
\[
h_{\rm neck}=L_{\rm neck}-\delta_{\rm neck},
\]
the hot part of $L_{\rm neck}$. Proposition~\ref{prop:gamma-decomposition} gives, in both topologies,
\[
\Gamma_{\rm neck}
=
\frac{1}{2\pi i c_{\rm per}}L_{\rm neck};
\]
hence
\[
I_g^{\rm neck}
=
\frac{1}{2\pi i c_{\rm per}}
\int_{L_{\rm neck}}\alpha.
\]

\begin{lem}[Cold collar identity]\label{lem:cold-collar}
In this setting,
\[
B_g^{\rm cold}
=
tI_g^{\rm neck}+H_g^{\rm collar},
\]
where
\begin{equation}\label{eq:cold-collar-remainder}
H_g^{\rm collar}
=
\frac{1}{2\pi i}
\left[
\int_{C_{\rm out}}\Phi_{\rm cut}\alpha
+
\int_{C_{\rm in}}\Phi_{\rm cut}\alpha
-
P\int_{h_{\rm neck}}\alpha
\right].
\end{equation}
Moreover,
\[
H_g^{\rm collar}=O(1)
\]
on the full punctured $t$-disc, locally uniformly in $s$ and per fixed $g$, in both one-neck topologies.
\end{lem}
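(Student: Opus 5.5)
The identity will come from the residue theorem on the cut cold annulus, applied to $\Phi_{\rm cut}\alpha$. The bound then follows from Theorem~\ref{thm:exterior-bound}, since every remaining term is an integral over a hot compact set or a plumbing circle.

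\textbf{Residue computation.} Cut $A_{\rm cold}$ along $\delta_{\rm neck}$. On the cut annulus $\Phi_{\rm cut}$ is single-valued, and $\alpha$ is meromorphic with poles only at the cold points $a_\pm$. The boundary of the cut annulus is $C_{\rm out}+C_{\rm in}$ together with the two banks of the cut, traversed in opposite directions. Lemma~\ref{lem:local-parity} shows that the cold residues of $\Phi_a\alpha$ do not depend on the additive constant in $\Phi_a$. We may therefore use $\Phi_{\rm cut}$ near $a_\pm$, so the residue theorem gives
\[
2\pi i\,B_g^{\rm cold}
=\int_{C_{\rm out}}\Phi_{\rm cut}\alpha+\int_{C_{\rm in}}\Phi_{\rm cut}\alpha
+\bigl(\text{cut-bank contributions}\bigr).
\]
Across the cut, $\Phi_{\rm cut}$ jumps by $P$, by Proposition~\ref{prop:regime-atlas}(7), while $\alpha$ is single-valued. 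The two bank integrals therefore combine to $\pm P\int_{\delta_{\rm neck}}\alpha$. I would fix the sign using $\gamma\cdot\delta_{\rm neck}=+1$ and the counterclockwise continuation convention, aiming for $+P\int_{\delta_{\rm neck}}\alpha$. Next write $\int_{\delta_{\rm neck}}\alpha=\int_{L_{\rm neck}}\alpha-\int_{h_{\rm neck}}\alpha$. Since $P\int_{L_{\rm neck}}\alpha=\frac{t}{c_{\rm per}}\,2\pi i c_{\rm per}I_g^{\rm neck}=2\pi i\,tI_g^{\rm neck}$, dividing by $2\pi i$ gives exactly $B_g^{\rm cold}=tI_g^{\rm neck}+H_g^{\rm collar}$ with \eqref{eq:cold-collar-remainder}. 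In the nonseparating case one more check is needed: the value of $\int_{L_{\rm neck}}\alpha$ must not depend on where the completed cycle is drawn. This holds because $\alpha$ has zero residues (Lemma~\ref{lem:local-parity}) and zero $A$-periods (Theorem~\ref{thm:deck-descent}(2)); the latter removes any ambiguity coming from Picard--Lefschetz additions of $\gamma$.

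\textbf{Bounds.} Take the three terms of $H_g^{\rm collar}$ in turn.
\begin{itemize}
\item The term $P\int_{h_{\rm neck}}\alpha$ is $O(|t|)$. Indeed $h_{\rm neck}$ is a family of paths in a fixed hot compact region, where $\alpha=O(1)$ by Theorem~\ref{thm:exterior-bound}. In the separating case the path ends at punctures, and the maximum-modulus argument from Step~7 of Theorem~\ref{thm:physical-special-geometry} handles these endpoints.
\item On $C_{\rm out}$ and $C_{\rm in}$, Theorem~\ref{thm:exterior-bound} gives $\alpha=O(1)$, so it remains to bound $\Phi_{\rm cut}$ there.
\end{itemize}

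\textbf{Main obstacle.} The hard step is the bound on $\Phi_{\rm cut}$ on the plumbing circles, specifically on $C_{\rm in}$. The normalization point lies on $C_{\rm out}$, so $\Phi_{\rm cut}$ on $C_{\rm in}$ is reached by integrating $\omega_{0,1}$ along the cross-cut, which passes through the scaled neck. I would expand $\omega_{0,1}$ in Laurent form in $z_+$ on the annulus. The holomorphic tail, controlled on $C_{\rm out}$, and the tail in $z_-=\eps^2/z_+$, controlled on $C_{\rm in}$, both integrate to $O(1)$, just as in Step~5 of Theorem~\ref{thm:exterior-bound}, because $\omega_{0,1}$ is holomorphic with bounded coefficients on the two fixed hot circles. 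The only dangerous term is the $dz_+/z_+$ mode. Its coefficient is $t/(2\pi i c_{\rm per})$ by Proposition~\ref{prop:regime-atlas}(7), and its primitive $\frac{t}{2\pi i c_{\rm per}}\Log_{\rm cut}(z_+)$ is of size $O(|t|\log|t|^{-1})=O(1)$ when $|z_+|\ge|\eps|^2/R$. Hence $\Phi_{\rm cut}=O(1)$ on both circles, and so $H_g^{\rm collar}=O(1)$. The result is uniform in $s$ by the local uniformity of the earlier estimates. It holds on the full punctured $t$-disc because both sides of the identity are invariant under $\mathfrak d$ (Theorem~\ref{thm:deck-descent}), so the bound obtained on the $\eps$-cover descends.
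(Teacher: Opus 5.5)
Your proposal is correct. Its architecture matches the paper's proof: the residue theorem for $\Phi_{\rm cut}\alpha$ on the radially cut annulus, the bank jump by $P$, the split $L_{\rm neck}=\delta_{\rm neck}+h_{\rm neck}$, and Theorem~\ref{thm:exterior-bound} together with the maximum-modulus extension at puncture endpoints.

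\textbf{The bank sign.} You left this as ``aiming for'' $+$, and the paper settles it exactly as you anticipate. Write $z_+=e^{\rho+i\theta}$ with $0<\theta<2\pi$. The complex orientation is $d\rho\wedge d\theta$, so the bank $\theta=2\pi$ is traversed from $C_{\rm out}$ to $C_{\rm in}$, that is, as $\delta_{\rm neck}$, and the bank $\theta=0$ is traversed as $-\delta_{\rm neck}$. Positive continuation around $\gamma$ gives $\Phi_{\rm top}=\Phi_{\rm bottom}+P$, so the banks contribute $+P\int_{\delta_{\rm neck}}\alpha$. You should actually write this computation down, because this sign is what makes the two occurrences of $tI_g^{\rm neck}$ cancel in the Euler defect.

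\textbf{The bound on $\Phi_{\rm cut}$.} This is where your route genuinely differs. The paper splits $\omega_{0,1}=E\,du+Mv\,du$ by deck parity and gives the even part a bounded holomorphic primitive. It then integrates $|v\,du|\le C(r+2|\lambda|/r+|\lambda|^2/r^3)\,dr$ radially from $|\lambda|/R$ to $R$, obtaining $O(1+|\lambda|\,|\log|\lambda||)$, so the $dz_+/z_+$ mode is covered by the same estimate. Your Laurent decomposition instead isolates that mode exactly through Proposition~\ref{prop:regime-atlas}(7). This makes the origin of the $|t|\log(1/|t|)$ size transparent and avoids the parity split.

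There is one technical point to fix in your version, because you evaluate on the boundary circles themselves. Coefficientwise Cauchy bounds taken only on $C_{\rm out}$ and $C_{\rm in}$ give a logarithmically divergent tail primitive at radius exactly $R$. This is why Step~5 of Theorem~\ref{thm:exterior-bound} separates $R_{\rm control}$ from $R_{\rm cross}$. Either of the following fixes it:
\begin{itemize}
\item Bound each tail as a function by maximum modulus on $|z_\pm|\le R$. On each circle, the opposite tail and the $dz_+/z_+$ term are small, so the tail is bounded there.
\item Take the control on slightly larger circles inside the node chart.
\end{itemize}

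\textbf{The full punctured disc.} The paper patches finitely many cut directions, using that $B_g^{\rm cold}-tI_g^{\rm neck}$ is intrinsic. Several directions are needed because the cut must avoid the cold points $z_+=\pm\eps$, which rotate with $\eps$. Your $\mathfrak d$-invariance argument reaches the same conclusion once $I_g^{\rm neck}$ is single-valued, which your remarks on zero residues and zero $A$-periods supply.
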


\begin{proof}
\emph{Step 1: banks and orientations.} Write
\[
z_+=e^{\rho+i\theta},
\qquad
0<\theta<2\pi
\]
on the cut annulus. Its complex orientation is $d\rho\wedge d\theta$. The bank $\theta=2\pi$ is therefore oriented from the outer boundary to the inner boundary, hence as $\delta_{\rm neck}$, while the bank $\theta=0$ has the opposite orientation. If $\Phi_{\rm top}$ and $\Phi_{\rm bottom}$ are the corresponding boundary values, then, since positive continuation once around $\gamma$ changes $\Phi_{\rm cut}$ by $\int_\gamma\omega_{0,1}=P$ (Proposition~\ref{prop:regime-atlas}),
\[
\Phi_{\rm top}=\Phi_{\rm bottom}+P.
\]
The two bank integrals consequently combine with positive sign to
\[
P\int_{\delta_{\rm neck}}\alpha.
\]

\emph{Step 2: the collar residue identity.} The form $\alpha$ has poles only at ramification points, and exactly the two cold ramification points lie in this annulus. By Lemma~\ref{lem:local-parity} both residues of $\alpha$ vanish, so the two cold residues in $B_g^{\rm cold}$ are unchanged when the primitives $\Phi_a$ are replaced by $\Phi_{\rm cut}$. Applying the residue theorem to $\Phi_{\rm cut}\alpha$ on the cut annulus, whose boundary is $C_{\rm out}+C_{\rm in}$ together with the two banks (Proposition~\ref{prop:regime-atlas}), gives
\begin{equation}\label{eq:cold-collar-residue}
2\pi i B_g^{\rm cold}
=
\int_{C_{\rm out}}\Phi_{\rm cut}\alpha
+
\int_{C_{\rm in}}\Phi_{\rm cut}\alpha
+
P\int_{\delta_{\rm neck}}\alpha.
\end{equation}
\emph{Step 3: the neck integral.} Since $\Gamma_{\rm neck}=(2\pi i c_{\rm per})^{-1}L_{\rm neck}$ in both topologies, $P=t/c_{\rm per}$, and
$L_{\rm neck}=\delta_{\rm neck}+h_{\rm neck}$,
\[
tI_g^{\rm neck}
=
\frac{P}{2\pi i}
\left[
\int_{\delta_{\rm neck}}\alpha
+
\int_{h_{\rm neck}}\alpha
\right].
\]
Subtracting this equality from \eqref{eq:cold-collar-residue} proves
\eqref{eq:cold-collar-remainder} and the exact collar identity.

\emph{Step 4: bounding the remainder.} In the coordinates of Proposition~\ref{thm:normal-form}, the initial form on the neck splits into its deck-even and deck-odd parts,
\[
\omega_{0,1}
=
E(u,\lambda,s)\,du
+
M(u,\lambda,s)v\,du,
\qquad
\lambda=\eps^2,
\]
where the odd part $Mv\,du$ is supplied by that proposition and the even part defines $E$; both coefficients are holomorphic and bounded, locally uniformly in $s$. The even term has a bounded holomorphic primitive. Along a radial cross-cut, with $z=z_+$,
\[
u=\frac{z+\lambda/z}{2},
\qquad
v=\frac{z-\lambda/z}{2},
\qquad
du=\frac{1-\lambda/z^2}{2}\,dz.
\]
Writing $r=|z|$ gives
\[
|v\,du|
\leq
C\left(
r+\frac{2|\lambda|}{r}
+\frac{|\lambda|^2}{r^3}
\right)dr.
\]
Its integral from $|\lambda|/R$ to $R$ is
\[
O\!\left(
1+|\lambda|\lvert\log|\lambda|\rvert
\right)
=
O(1).
\]
A point of either fixed plumbing circle, or of a bank, is reached from the normalization point by radial segments as above and arcs of the two fixed circles, on which $\omega_{0,1}$ is bounded; hence $\Phi_{\rm cut}=O(1)$ on the circles and the banks. The borderline logarithmic contribution $(P/2\pi i)\Log_{\rm cut}(z_+)$ of Proposition~\ref{prop:regime-atlas} is controlled the same way: $|\Log_{\rm cut}(z_+)|=O(\log(1/|\lambda|))$ on the annulus, while $P=O(\lambda)$.

Theorem~\ref{thm:exterior-bound} gives $\alpha=O(1)$ on
$C_{\rm out}$, $C_{\rm in}$, and every fixed hot interior segment of
$h_{\rm neck}$. In the separating case, the completion may end at fixed punctures. The maximum-modulus extension through fixed puncture discs established in Step~7 of the proof of Theorem~\ref{thm:physical-special-geometry} supplies the same bound there. Thus the two circle integrals and
$\int_{h_{\rm neck}}\alpha$ are bounded. Since $P=O(t)$, every term in
\eqref{eq:cold-collar-remainder} is $O(1)$.

A finite collection of radial cut directions covers the punctured parameter disc. On overlaps,
\[
H_g^{\rm collar}
=
B_g^{\rm cold}-tI_g^{\rm neck}
\]
is the difference of intrinsic quantities, so the local expressions patch. All estimates are locally uniform in $s$.
\end{proof}

For $g\geq2$, the free-energy residue sum splits into its cold and hot parts,
\[
(2-2g)F_g^c
=
B_g^{\rm cold}+B_g^{\rm hot},
\]
with $B_g^{\rm cold}$ as above and $B_g^{\rm hot}$ the corresponding sum over the hot ramification points, and the special-geometry decomposition of Theorem~\ref{thm:physical-special-geometry} reads
\[
\partial_tF_g^c=I_g^{\rm neck}+R_g,
\qquad
R_g=O(1).
\]
A function homogeneous of degree $2-2g$ in $t$ satisfies $t\partial_tF=(2-2g)F$. The \emph{Euler defect}
\[
\mathcal E_g
=
(2-2g)F_g^c-t\partial_tF_g^c
\]
measures the failure of this identity for the conifold free energy: it vanishes identically precisely when $F_g^c(s,t)=C(s)\,t^{2-2g}$ with $C$ independent of $t$, and its boundedness on the punctured $t$-disc is what Section~\ref{sec:assembly} converts into the gap.

\begin{thm}[Bounded Euler defect]\label{thm:euler-defect}
Assume the toric one-node setting of Propositions~\ref{thm:generic-framing}, \ref{thm:normal-form}, and~\ref{prop:regime-atlas}, with good framing, conifold-adapted marking, exact period, and projection-compatible neck, and fix $g\geq2$. Then $B_g^{\rm hot}$, $B_g^{\rm cold}$, $I_g^{\rm neck}$, $R_g$, and $F_g^c$ are jointly holomorphic and single-valued on a product of a spectator neighborhood with the full punctured $t$-disc, and there is a jointly holomorphic function $H_g$ there such that
\begin{equation}\label{eq:dilaton-neck}
(2-2g)F_g^c
=
tI_g^{\rm neck}+H_g,
\qquad
H_g=O(1).
\end{equation}
Consequently the Euler defect satisfies
\begin{equation}\label{eq:euler-defect}
\mathcal E_g
=
H_g-tR_g
=
O(1).
\end{equation}
All bounds are locally uniform in $s$ and per fixed $g$, hold in both one-neck topologies, and cover the full family deformation, including the bounded hot residual $R_g$.
\end{thm}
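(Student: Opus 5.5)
The plan is to assemble the bound from three earlier results: the cold collar identity (Lemma~\ref{lem:cold-collar}), the hot free-energy residue bound coming from Theorem~\ref{thm:exterior-bound}, and the special-geometry identity of Theorem~\ref{thm:physical-special-geometry}. Regularity comes first. Theorem~\ref{thm:deck-descent} shows that $F_g^c$ is jointly holomorphic and single-valued on the product of a spectator neighborhood with the punctured $t$-disc. For $I_g^{\rm neck}$, $R_g$, $B_g^{\rm cold}$ and $B_g^{\rm hot}$, I would use the same descent mechanism. Each is a finite combination of residues or generalized-cycle integrals of $\omega_{g,1}^c$, which descends by \eqref{eq:deck-correlators}. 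The integrands are $\mathfrak d$-even, and the supports of $\Gamma_{\rm neck}$ and $\Gamma_{\rm hot}$ are transported compatibly with the family-transition cocycle. The remaining ambiguity under a loop is a change of the completed dual by $\pm\gamma$ (Proposition~\ref{prop:regime-atlas}(6)), and this costs nothing, because \eqref{eq:A-period-vanishing} kills the $\gamma$-period of $\omega_{g,1}^c$. The split into cold and hot residues is preserved, since $\mathfrak d$ only swaps $a_\pm$.

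Next I would bound the hot part. Around each hot ramification section take a fixed small circle, and choose a local primitive of $\omega_{0,1}$ that vanishes at the section and is bounded on the circle. Theorem~\ref{thm:exterior-bound} bounds $\omega_{g,1}^c$ there, so $B_g^{\rm hot}=O(1)$ locally uniformly in $s$. This is Step~1 of the proof of Theorem~\ref{thm:gaussian-limit}, now read without the factor $t^{2g-2}$. I would then set
\[
H_g=B_g^{\rm hot}+H_g^{\rm collar}.
\]
By Lemma~\ref{lem:cold-collar}, $(2-2g)F_g^c=B_g^{\rm cold}+B_g^{\rm hot}=tI_g^{\rm neck}+H_g$ with $H_g=O(1)$, which proves \eqref{eq:dilaton-neck}. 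Holomorphy of $H_g$ needs no separate argument, because $H_g=(2-2g)F_g^c-tI_g^{\rm neck}$ is a difference of intrinsic single-valued holomorphic quantities. This also settles the patching across the radial cut directions used in the lemma.

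For \eqref{eq:euler-defect}, multiply the special-geometry identity $\partial_tF_g^c=I_g^{\rm neck}+R_g$ by $t$ and subtract it from \eqref{eq:dilaton-neck}. The neck terms cancel exactly and leave $\mathcal E_g=H_g-tR_g$. Since $R_g=O(1)$ and $t$ is bounded, $\mathcal E_g=O(1)$. Local uniformity in $s$ and validity in both topologies are inherited from the cited results.

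I expect the main obstacle to be the consistency check rather than any estimate. The $I_g^{\rm neck}$ appearing in the collar lemma, with $\Gamma_{\rm neck}=(2\pi i c_{\rm per})^{-1}L_{\rm neck}$ and $L_{\rm neck}$ oriented by $\gamma\cdot\delta_{\rm neck}=+1$, must be literally the same quantity as the $I_g^{\rm neck}$ in Theorem~\ref{thm:physical-special-geometry}, with matching orientation and the same hot completion $h_{\rm neck}$ in both topologies. My first step there is to compare the two constructions of $\Gamma_{\rm neck}$ in Proposition~\ref{prop:gamma-decomposition} with the $L_{\rm neck}$ fixed before the lemma. Any discrepancy between completions is a hot generalized cycle with bounded coefficients, and its integral against $\omega_{g,1}^c$ is $O(1)$; multiplied by $t$, it can be absorbed into $H_g$ without affecting the conclusion.
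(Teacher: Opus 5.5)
Your proposal is correct and follows the paper's proof essentially step for step. You set $H_g=B_g^{\rm hot}+H_g^{\rm collar}$ using Theorem~\ref{thm:exterior-bound} and Lemma~\ref{lem:cold-collar}, cancel $tI_g^{\rm neck}$ exactly against Theorem~\ref{thm:physical-special-geometry}, and get single-valuedness from Theorem~\ref{thm:deck-descent}. Two small fixes: in the separating case $\gamma$ is not an adapted $A$-cycle, so $\int_\gamma\omega_{g,1}^c=0$ there does not follow from \eqref{eq:A-period-vanishing}; as in the paper, it follows from the residue theorem on the side bounded by $\gamma$ together with Lemma~\ref{lem:local-parity}, and you should invoke that same lemma explicitly to show that $B_g^{\rm hot}$ and $B_g^{\rm cold}$ are unchanged under logarithm-branch shifts $c_yx_f+c_0$ of the primitives.
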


\begin{proof}
\emph{Step 1: the chart-level dilaton identity.} Theorem~\ref{thm:deck-descent} proves that $\alpha$ and $F_g^c$ descend as jointly holomorphic single-valued objects to the full punctured $t$-disc. It also gives
\[
\int_{A_i}\alpha=0
\]
for every adapted $A$-cycle.

Work first on a simply connected marked chart. Normalize each hot primitive by $\Phi_a(a)=0$. Lemma~\ref{lem:local-parity} gives $\alpha$ zero residue at every ramification point, so this normalization does not change the residue sum. The primitive is $O(1)$ on a fixed hot residue circle, and Theorem~\ref{thm:exterior-bound} gives $\alpha=O(1)$ there. Hence
\[
B_g^{\rm hot}=O(1).
\]

For the cold pair, Lemma~\ref{lem:cold-collar} gives the exact identity
\[
B_g^{\rm cold}
=
tI_g^{\rm neck}+H_g^{\rm collar},
\qquad
H_g^{\rm collar}=O(1).
\]
The lemma supplies both the explicit plumbing-circle formula for
$H_g^{\rm collar}$ and its locally uniform bound on the full punctured $t$-disc. Set
\[
H_g=B_g^{\rm hot}+H_g^{\rm collar}.
\]
This proves \eqref{eq:dilaton-neck} on the marked chart.

\emph{Step 2: the defect bound.} Subtracting $t$ times the special-geometry decomposition of Theorem~\ref{thm:physical-special-geometry} from \eqref{eq:dilaton-neck} gives
\[
\mathcal E_g=H_g-tR_g=O(1).
\]
The two occurrences of $tI_g^{\rm neck}$ cancel exactly, so the bound rests on $H_g$ and $R_g$ alone.

\emph{Step 3: descent of the neck integral and residual.} It remains to descend the auxiliary objects. In the nonseparating case, continuation once around $q=0$ changes the adapted dual cycle $\beta_1$ by the Picard--Lefschetz monodromy, adding $\gamma$ or $-\gamma$ according to the loop-orientation convention (Proposition~\ref{prop:regime-atlas}), and fixes $A_1=\gamma$. The change in $I_g^{\rm neck}$ is a multiple of
\[
\int_{A_1}\alpha=0.
\]
In the separating case, continuation can change the relative crossing path only by the vanishing cycle $\gamma$, which bounds a subsurface. The poles of $\alpha$ lie only at ramification points, so $\alpha$ is holomorphic at the punctures, and each pole has zero residue by Lemma~\ref{lem:local-parity}; the residue theorem gives
\[
\int_\gamma\alpha=0.
\]
Thus $I_g^{\rm neck}$ is single-valued in both topologies.

The deformation form represented by $\Gamma_t$ is intrinsic; the representative decomposes as
\[
\Gamma_{\rm neck}+\Gamma_{\rm hot}.
\]
Continuation acts on the marking data through powers of $T_\gamma$, so its supports can change only by adapted $A$-cycles, the vanishing cycle $\gamma$, and small loops around fixed puncture endpoints. The first two pairings with $\alpha$ vanish as above. The one-point correlator is holomorphic at punctures because all its poles are at ramification points, so the endpoint loops also integrate to zero. Therefore
\[
R_g=\int_{\Gamma_{\rm hot}}\alpha
\]
is single-valued.

\emph{Step 4: single-valuedness of the remainder.} By Theorem~\ref{thm:deck-descent}, monodromy permutes the ramification sections and leaves $\alpha$ invariant. A logarithm-branch change sends a local primitive to
\[
\Phi_a+c_yx_f+c_0.
\]
The functions $x_f$ and $1$ are invariant under the local involution, so Lemma~\ref{lem:local-parity} gives
\[
\operatorname*{Res}_{p=a}
[(c_yx_f+c_0)\alpha]=0.
\]
Thus neither branch changes nor permutations alter $B_g^{\rm hot}$ or $B_g^{\rm cold}$. It follows that
\[
H_g^{\rm collar}=B_g^{\rm cold}-tI_g^{\rm neck}
\]
and $H_g$ are jointly holomorphic and single-valued. All constituent estimates hold on the full punctured $t$-disc, locally uniformly in $s$.
\end{proof}

\section{Assembly and calibration of the gap}\label{sec:assembly}

This section assembles the gap from the descent, the bounded Euler defect, and the Gaussian leading limit, and then evaluates the universal constant against the Gaussian matrix model.

\subsection{Assembly of the gap}

We restate the main theorem in its general form, with an arbitrary period normalization; Theorem~\ref{thm:main-gap-intro} follows by specializing to $c_{\rm per}=1/(2\pi i)$ and evaluating the constant through Corollary~\ref{cor:main-calibrated}; the deduction is carried out at the end of Section~\ref{sec:calibration}.

\begin{thm}[Generic one-neck conifold gap]\label{thm:main-gap}
Let $\mathfrak X$ be a smooth toric Calabi--Yau threefold, and let
\[
H(X,Y;q,s)=0
\]
be a local analytic family of its reduced compactified mirror curves with fixed two-dimensional Newton polygon $\Delta$. Here $q$ is transverse to the smooth conifold divisor, the discriminant locus along which the fiber acquires its node, and $s$ denotes the spectator moduli. Assume that at $(q,s)=(0,s_0)$:

\begin{enumerate}[label=\textup{(\arabic*)}]
\item the compactified curve has exactly one ordinary double point, at a point $(X_0,Y_0)$ of the dense torus $(\C^*)^2$;
\item the $q$-direction smooths this node transversely, that is, $\partial H/\partial q\neq0$ at $(X_0,Y_0;0,s_0)$;
\item the central compactified curve is otherwise smooth, so this node is its only singular point;
\item the vertex coefficients are nonzero and the puncture sections are pairwise disjoint.
\end{enumerate}

Choose a good integer framing $f$ and put
\[
x_f=\log X+f\log Y,\qquad
y=\log Y,\qquad
\omega_{0,1}=y\,dx_f.
\]
Here good means that $f\in\Z\setminus(B_{\rm res}\cup B_{\rm proj})$, where $B_{\rm res}$ and $B_{\rm proj}$ are the finite exceptional sets of Proposition~\ref{thm:generic-framing}. For such $f$, ordinary Eynard--Orantin recursion applies, exactly two simple cold ramification points enter the neck, and every other ramification point and every puncture stays in a fixed hot region (Proposition~\ref{thm:generic-framing}), locally uniformly in $s$.

Equip the smooth fibers with the conifold-adapted Torelli marking fixed before Proposition~\ref{prop:regime-atlas}, and let $\gamma$ be the oriented vanishing cycle fixed there, defined in both topologies. If the neck is nonseparating, $\gamma$ is the first $A$-cycle; if the neck is separating, the componentwise limiting $A$-markings are used, with the relative generalized cycle $\delta_{\rm neck}$ of Proposition~\ref{prop:regime-atlas} crossing the neck. Let $B_t$ be the resulting normalized fundamental bidifferential. For a fixed scalar $c_{\rm per}\in\C^*$, define
\begin{equation}\label{eq:main-period}
t=c_{\rm per}\int_\gamma\omega_{0,1}.
\end{equation}
Then $(t,s)$ is a holomorphic coordinate system near $(0,s_0)$ in which the discriminant is $\{t=0\}$; in particular $t$ is single-valued and holomorphic. In a projection-compatible neck (Proposition~\ref{thm:normal-form}), on the quadratic cover,
\[
v^2=u^2-\eps^2,\qquad
t=\eps^2c_{\rm neck}(\eps^2,s),
\]
where $c_{\rm neck}$ is holomorphic and bounded away from zero, locally uniformly in $s$.

For every fixed integer $g\geq2$, define the conifold-polarized Eynard--Orantin free energy by
\begin{equation}\label{eq:main-free-energy}
F_g^c(s,t)=
\frac{1}{2-2g}
\sum_{a\in\Ram(x_f)}
\operatorname*{Res}_{p=a}
\Phi_a(p)\,\omega_{g,1;t,s}^c(p),
\qquad
d\Phi_a=\omega_{0,1;t,s}.
\end{equation}
Here $\omega_{g,1;t,s}^c$ are the one-point correlators of Eynard--Orantin recursion with initial data $(\omega_{0,1},B_t)$, and $\Phi_a$ is a local primitive near $a$; by Theorem~\ref{thm:deck-descent}, $F_g^c$ is jointly holomorphic and single-valued on the punctured product of a spectator neighborhood with the $t$-disc. After shrinking to a product $U_{\rm sp}\times D_t$, where $U_{\rm sp}$ may have complex dimension zero, there are a unique constant $C_g\in\C$, independent of $s$, and a unique jointly holomorphic function
\[
H_g^{\rm gap}\in\Ocal(U_{\rm sp}\times D_t)
\]
such that, for $t\neq0$,
\begin{equation}\label{eq:main-gap}
F_g^c(s,t)=C_gt^{2-2g}+H_g^{\rm gap}(s,t).
\end{equation}
In particular, the Laurent expansion of $F_g^c$ in $t$ has no logarithmic term and no negative power other than $t^{2-2g}$. The shrinking and the holomorphic extension are uniform for $s$ in compact subsets of the smooth conifold divisor. The conclusion holds for each fixed $g$ and covers both nonseparating and separating nodes.
\end{thm}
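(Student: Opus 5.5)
The plan is to assemble the three ingredients already in place: single-valuedness (Theorem~\ref{thm:deck-descent}), the bounded Euler defect (Theorem~\ref{thm:euler-defect}) and the Gaussian leading limit (Corollary~\ref{cor:gaussian-limit-disc}). These feed a Laurent-coefficient argument in $t$, carried out for each fixed $s$ and then made uniform.

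\emph{Coordinates.} Proposition~\ref{thm:period-split}(2) shows that $(t,s)$ is a holomorphic coordinate system with discriminant $\{t=0\}$. Proposition~\ref{prop:regime-atlas}(3) gives the quadratic-cover description $t=\eps^2c_{\rm neck}$. Shrink to a product $U_{\rm sp}\times D_t$ with $D_t=\{|t|<\delta\}$, where $\delta$ is uniform for $s$ in a compact neighborhood. By Theorem~\ref{thm:deck-descent}, $F_g^c$ is jointly holomorphic and single-valued on $U_{\rm sp}\times D_t^*$. For each $s$ it therefore has a Laurent expansion
\[
F_g^c(s,t)=\sum_{n\in\Z}a_n(s)t^n,
\qquad
a_n(s)=\frac{1}{2\pi i}\oint_{|t|=r}F_g^c\,t^{-n-1}\,dt .
\]
Each $a_n$ is holomorphic in $s$, because we may differentiate under the integral. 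In particular no logarithm can occur.

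\emph{Killing the polar coefficients.} Termwise,
\[
\mathcal E_g=(2-2g)F_g^c-t\partial_tF_g^c=\sum_n(2-2g-n)a_n(s)t^n .
\]
Theorem~\ref{thm:euler-defect} gives $\mathcal E_g=O(1)$ uniformly on compact subsets of $U_{\rm sp}$. The Cauchy estimate on $|t|=r$ then bounds the $n$-th coefficient by $Cr^{-n}$. Letting $r\to0$ shows $(2-2g-n)a_n(s)=0$ for every $n<0$. Hence $a_n\equiv0$ for all negative $n\neq 2-2g$, and $\mathcal E_g$ extends holomorphically across $t=0$.

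Set $C_g(s)=a_{2-2g}(s)$ and $H_g^{\rm gap}=\sum_{n\ge0}a_nt^n$. The series defining $H_g^{\rm gap}$ is the regular part of $F_g^c$. One can write it as $F_g^c-C_gt^{2-2g}$ on $D_t^*$; a Riemann removable-singularity argument, applied with the uniform bound, then shows it is jointly holomorphic on $U_{\rm sp}\times D_t$. Alternatively, use the Cauchy-integral representation of the regular part on an annulus $|t|=r$. Either way joint holomorphy and the uniformity follow.

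\emph{Constancy of $C_g$.} By Corollary~\ref{cor:gaussian-limit-disc},
\[
t^{2g-2}F_g^c=C_g(s)+t^{2g-2}H_g^{\rm gap}\to F_g^G \quad\text{as } t\to0,
\]
and the second term vanishes in the limit since $2g-2\ge2$. So $C_g(s)=F_g^G$ is independent of $s$. Uniqueness of $(C_g,H_g^{\rm gap})$ is immediate: if two decompositions existed, their difference $(C_g-C_g')t^{2-2g}$ would be holomorphic at $0$, forcing $C_g=C_g'$ and then $H_g^{\rm gap}=H_g'^{\,\rm gap}$.

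\emph{Scope and the main obstacle.} Both node topologies are covered, since every input theorem was stated for both. No new analysis is needed. The only delicate step is the uniformity in $s$ of the Cauchy estimates and of the joint holomorphic extension. I would handle it by fixing one radius $\delta$ valid on a compact $K\subset U_{\rm sp}$, which Theorems~\ref{thm:euler-defect} and~\ref{thm:deck-descent} provide, and running the coefficient integrals on fixed circles. The evaluation $C_g=B_{2g}/(2g(2g-2))$ is deferred to the calibration of $F_g^G$.
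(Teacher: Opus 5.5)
Your proposal is correct and follows essentially the same route as the paper. The paper packages exactly this Laurent-coefficient argument as Lemma~\ref{lem:assembly}, applied with $B=R=0$, $H=\mathcal E_g$, $I=\partial_tF_g^c$, and feeds it the same three inputs: single-valuedness from Theorem~\ref{thm:deck-descent}, the bound $\mathcal E_g=O(1)$ from Theorem~\ref{thm:euler-defect}, and Corollary~\ref{cor:gaussian-limit-disc}, which yields $C_g=F_g^G$. The differences are cosmetic: you kill the polar coefficients by direct Cauchy estimates instead of first extending $\mathcal E_g$ across $t=0$, and your fixed-circle Cauchy integral for the joint extension of the regular part is the one the paper uses.
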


The constant $C_g$ is evaluated by comparison with the Gaussian Hermitian one-matrix model, whose spectral curve is the Gaussian curve
\[
G=\{w^2=z^2-1\},\qquad
z=\frac{\zeta+\zeta^{-1}}{2},\qquad
w=\frac{\zeta-\zeta^{-1}}{2}
\]
of Section~\ref{sec:normal-form}, with initial form $-\tfrac12w\,dz$; Section~\ref{sec:calibration} rescales this form to the exact-period normalization. Let $\gamma_G=\{|\zeta|=1\}$ be the counterclockwise unit circle, whose image under $\Phi_\eps$ is the oriented vanishing cycle $\gamma$, and put $\kappa=\int_{\gamma_G}w\,dz$, which is nonzero by Proposition~\ref{thm:period-split}. The Bernoulli numbers are defined by $\frac{x}{e^x-1}=\sum_{m\geq0}B_m\frac{x^m}{m!}$. The following corollary is proved in Section~\ref{sec:calibration}.

\begin{cor}[Calibrated universal coefficient]\label{cor:main-calibrated}
In the setting of Theorem~\ref{thm:main-gap}, for every integer $g\geq2$, the constant $C_g$ of \eqref{eq:main-gap} satisfies
\begin{equation}\label{eq:main-calibrated}
C_g=
\frac{B_{2g}}{2g(2g-2)}
(2c_{\rm per}\kappa)^{2g-2}.
\end{equation}
The coefficient $C_g$ is nonzero, depends only on the period normalization and the recursion and free-energy conventions, and is independent of $\mathfrak X$, of $\Delta$, and of the point $s$ of the smooth conifold divisor. Reversing the orientation of the vanishing cycle replaces $\kappa$ by $-\kappa$ and $t$ by $-t$; since the exponent $2g-2$ is even, both $C_g$ and the polar term $t^{2-2g}$ are unchanged. For the counterclockwise orientation fixed above, $\kappa=-\pi i$; if $c_{\rm per}=1/(2\pi i)$, then
\[
C_g=\frac{B_{2g}}{2g(2g-2)}.
\]
\end{cor}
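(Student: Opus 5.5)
The plan has two parts. First, identify $C_g$ with the Gaussian free energy $F_g^G$ of \eqref{eq:gaussian-free-energy}. Second, compute $F_g^G$ by rescaling from the standard Gaussian matrix model.

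For the first part, multiply \eqref{eq:main-gap} by $t^{2g-2}$. Since $H_g^{\rm gap}$ is holomorphic, hence bounded near $t=0$ locally uniformly in $s$, this gives
\[
t^{2g-2}F_g^c(s,t)=C_g+O(|t|^{2g-2}).
\]
Corollary~\ref{cor:gaussian-limit-disc} gives $t^{2g-2}F_g^c(s,t)=F_g^G+O(|t|^{1/2})$. Letting $t\to0$ yields $C_g=F_g^G$. Since $F_g^G$ is built only from the fixed curve $G$, the bidifferential $B_G$ and the unit exact-period initial form $\overline\omega_{0,1}^{\,G}=(c_{\rm per}\kappa)^{-1}w\,dz$, this immediately shows that $C_g$ is independent of $\mathfrak X$, of $\Delta$ and of $s$. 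It depends only on $c_{\rm per}\kappa$ and on the recursion and free-energy conventions.

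For the second part, use homogeneity. Replacing $\omega_{0,1}$ by $\mu\,\omega_{0,1}$ scales the kernel \eqref{eq:EO-kernel} by $\mu^{-1}$. By induction on the recursion, $\omega_{g,1}$ then scales by $\mu^{1-2g}$ and the local primitives $\Phi_a$ by $\mu$, so the free energy scales by $\mu^{2-2g}$. Write
\[
\overline\omega_{0,1}^{\,G}=\mu\left(-\tfrac12 w\,dz\right),\qquad \mu=-\frac{2}{c_{\rm per}\kappa}.
\]
Then $F_g^G=(c_{\rm per}\kappa/2)^{2g-2}F_g^{\rm GUE}$, where $F_g^{\rm GUE}$ is the genus-$g$ free energy of the Gaussian curve with initial form $-\tfrac12w\,dz$, computed in the paper's conventions (kernel \eqref{eq:EO-kernel}, free energy \eqref{eq:gaussian-free-energy}).

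The remaining input is the classical evaluation of $F_g^{\rm GUE}$. The Eynard--Orantin free energies of the Gaussian spectral curve equal the Harer--Zagier orbifold Euler characteristic $B_{2g}/(2g(2g-2))$ times a power of the 't~Hooft parameter fixed by the normalization of $y\,dx$. In the normalization $-\tfrac12w\,dz$, with the branch points at $z=\pm1$, the relevant factor should come out as $4^{2g-2}$. This gives
\[
F_g^G=\frac{B_{2g}}{2g(2g-2)}(2c_{\rm per}\kappa)^{2g-2}.
\]

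Two convention mismatches must be tracked in this step. First, \eqref{eq:physical-eo-sign} records that the correlators here differ from the EO07 correlators by $(-1)^{2g-2+n}$. For $n=1$ this sign is $-1$, but it is compensated in the free energy by the corresponding sign of the primitive convention, so I expect no net sign. Second, the Gaussian curve's scaling relates $z$ to $x/(2\sqrt{t})$, and this rescaling must be carried through. This bookkeeping is the main obstacle. I would resolve it by directly computing $F_2^G$ via two recursion steps at $\zeta=\pm1$ in the Zhukovsky coordinate and checking it against $B_4/(4\cdot 2)=-1/240$ times $(2c_{\rm per}\kappa)^2$. The general $g$ then follows from the literature identification, since the same scaling argument applies uniformly in $g$.

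The final assertions are immediate. $C_g\neq0$ because $B_{2g}\neq0$ for $g\geq1$. Reversing the orientation of $\gamma$ negates $\kappa$ and $t$, and the exponent $2g-2$ is even, so neither $C_g$ nor $t^{2-2g}$ changes. With $\kappa=-\pi i$ from Proposition~\ref{thm:period-split} and $c_{\rm per}=1/(2\pi i)$, we get $2c_{\rm per}\kappa=-1$, so $C_g=B_{2g}/(2g(2g-2))$.
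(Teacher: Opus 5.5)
Your overall route is the paper's. You identify $C_g=F_g^G$; your comparison of \eqref{eq:main-gap} with Corollary~\ref{cor:gaussian-limit-disc} is equivalent to the paper reading $C=F_g^G$ off Lemma~\ref{lem:assembly}. You then rescale by $F_g[\mu\omega_{0,1},B]=\mu^{2-2g}F_g[\omega_{0,1},B]$ with exactly the paper's $\mu=-2/(c_{\rm per}\kappa)$, and finish with a literature value for the Gaussian curve with initial form $-\tfrac12w\,dz$. The closing assertions (nonvanishing, orientation reversal, the value at $c_{\rm per}=1/(2\pi i)$) are handled as in the paper.

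The step that is not yet a proof is the one you flag yourself: $F_g^{\rm GUE}=4^{2g-2}B_{2g}/(2g(2g-2))$ in this paper's conventions. Your stated reason for the absence of a net sign is also not correct. Both free-energy definitions use the same local primitives of the same initial form, so no sign can come from a ``primitive convention''.

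The paper closes this step in Theorem~\ref{thm:gaussian-calibration}:
\begin{itemize}
\item It takes \cite{ACPPRS12} at $T=1/4$ with $p=-\zeta$. Then $x_P=z$, $y_P\,dx_P=-\tfrac12w\,dz$, $B_P=B_G$ and the cut endpoints are $\pm1$, so no rescaling of $x$ is needed.
\item It computes $\widehat K^G=-K_P$ directly, hence $\widehat\omega^G_{g,n}=(-1)^{2g-2+n}W^P_{g,n}$.
\item The resulting $-1$ at $n=1$ is cancelled by the prefactor, since \cite{ACPPRS12} uses $1/(2g-2)$ where the paper uses $1/(2-2g)$. This gives $\widehat F_g^G=F_g^P|_{T=1/4}$.
\end{itemize}
Against a source whose prefactor is $1/(2-2g)$, the $-1$ coming from \eqref{eq:physical-eo-sign} would not be compensated. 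So ``no net sign'' is a fact about a specific source and its conventions, not something to expect in general.

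Your $F_2^G$ check would detect a wrong sign. It fixes only the $g$-independent sign once the scale of the initial form is matched. It also requires computing $\omega_{1,1},\omega_{0,3},\omega_{1,2},\omega_{2,1}$, not two recursion steps. To cover all $g$ you still need a named all-genus formula in a specified convention, together with the kernel and prefactor comparison. That is exactly the content of Theorem~\ref{thm:gaussian-calibration}, which the paper's proof of the corollary simply cites.
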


The remaining step is complex-analytic and involves no toric input. We state it for an abstract family: the two exact identities below are the shape produced by Theorems~\ref{thm:physical-special-geometry} and~\ref{thm:euler-defect}, with a general bounded residual allowed. The letters $F$, $B$, $H$, $I$, $R$ are local to the lemma and are matched with the objects of the paper only in the proof of Theorem~\ref{thm:main-gap}.

\begin{lem}[Parametric Laurent assembly]\label{lem:assembly}
Let $d$ be a positive integer, let $U\subset\C^d$ be open, let $\delta>0$, and put
\[
D=\{t\in\C:|t|<\delta\},
\qquad
D^*=\{t\in\C:0<|t|<\delta\}.
\]
Let $g\geq2$ be an integer, and let
\[
F,\ B,\ H,\ I,\ R
\]
be holomorphic functions on $U\times D^*$. Assume that $B$, $H$, and $R$ are $O(1)$ as $t\to0$, locally uniformly in $u$, and assume the exact identities
\begin{equation}\label{eq:assembly-identities}
(2-2g)F=B+H+tI,
\qquad
\partial_tF=I+R.
\end{equation}
Define
\[
E=(2-2g)F-t\partial_tF.
\]
Then
\[
E=B+H-tR,
\]
so $E$ is bounded as $t\to0$, locally uniformly in $u$.

If, in addition, a constant $C\in\C$ satisfies
\[
t^{2g-2}F(u,t)\longrightarrow C
\]
locally uniformly in $u$ as $t\to0$, then there is a unique holomorphic function
\[
W\in\Ocal(U\times D)
\]
such that
\[
F(u,t)=Ct^{2-2g}+W(u,t)
\]
on $U\times D^*$. In particular the Laurent expansion of $F$ in $t$ carries no logarithm, because $F$ is single-valued on the full punctured disc, and no negative power other than $t^{2-2g}$, by the boundedness and limit hypotheses.
\end{lem}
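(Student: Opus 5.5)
The first assertion is pure algebra. From the two identities in \eqref{eq:assembly-identities}, $t\partial_tF=tI+tR$, so
\[
E=(2-2g)F-t\partial_tF=B+H+tI-tI-tR=B+H-tR .
\]
Since $B$, $H$, $R$ are $O(1)$ locally uniformly in $u$ and $|t|<\delta$, $E$ is bounded locally uniformly in $u$.

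For the second assertion I will work with Laurent coefficients in $t$ for fixed $u$. Because $F$ is holomorphic and single-valued on $U\times D^*$, it has a Laurent expansion
\[
F(u,t)=\sum_{m\in\Z}a_m(u)t^m,\qquad a_m(u)=\frac1{2\pi i}\int_{|t|=r}F(u,t)t^{-m-1}\,dt .
\]
Each $a_m$ is holomorphic in $u$ by differentiation under the integral, and it is independent of $r\in(0,\delta)$. Termwise,
\[
E(u,t)=\sum_m(2-2g-m)a_m(u)t^m ,
\]
with the same annular convergence. Cauchy's estimate on $|t|=r$ gives $|(2-2g-m)a_m(u)|\le M_K r^{-m}$, where $M_K$ is the uniform bound for $E$ on a compact $K\subset U$ and small $t$. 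For $m<0$, letting $r\to0$ forces $(2-2g-m)a_m=0$. Hence $a_m=0$ for every negative $m\ne 2-2g$.

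It remains to identify $a_{2-2g}$. Once the other negative coefficients vanish, $W:=F-a_{2-2g}(u)t^{2-2g}=\sum_{m\ge0}a_mt^m$ is holomorphic on $U\times D$. Joint holomorphy follows because the Cauchy integral $\frac1{2\pi i}\int_{|\tau|=r}W(u,\tau)(\tau-t)^{-1}d\tau$ represents $W$ for $|t|<r$ and is jointly holomorphic. So $W$ is bounded near $t=0$ locally uniformly in $u$. Then $t^{2g-2}F=a_{2-2g}(u)+t^{2g-2}W\to a_{2-2g}(u)$, and the limit hypothesis gives $a_{2-2g}\equiv C$.

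Uniqueness is immediate: two such decompositions differ by a holomorphic function on $U\times D$ that equals $(C'-C)t^{2-2g}$ on $D^*$. Since $2-2g<0$, this is holomorphic at $t=0$ only if $C'=C$, and then the two $W$'s coincide.

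The one point that needs care is the uniformity of the Cauchy estimate for the negative coefficients. The bound on $E$ must hold on a full circle $|t|=r$ for all small $r$, uniformly for $u$ in a compact set, and this is exactly what the locally uniform $O(1)$ hypotheses on $B$, $H$, $R$ provide. No logarithm can appear because $F$ is single-valued on $D^*$, so its Laurent series exists. The limit hypothesis is used only to fix the constant and to show it is independent of $u$.
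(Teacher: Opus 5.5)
Your proof is correct and follows the paper's route. The steps match: the exact cancellation of $tI$, the relation $e_m=(2-2g-m)a_m$ between the Laurent coefficients of $E$ and $F$, the vanishing of $e_m$ for $m<0$, the identification $a_{2-2g}=C$ from the limit hypothesis, and a Cauchy-integral argument for the joint holomorphy of $W$. The only difference is cosmetic: you kill the negative coefficients directly by the estimate $|e_m|\le M_K r^{-m}$ with $r\to0$, whereas the paper first constructs a jointly holomorphic extension of $E$ across $t=0$.
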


\begin{proof}
\emph{Step 1: exact cancellation of $tI$.} Subtract $t$ times the second identity in \eqref{eq:assembly-identities} from the first:
\[
E
=
B+H+tI-t(I+R)
=
B+H-tR.
\]
The two occurrences of $tI$ cancel exactly, so the bound rests on $B$, $H$, and $R$ alone.

\emph{Step 2: a locally uniform bound on $E$.} Fix a compact $K\subset U$. There are constants $M>0$ and $0<\delta_K<\delta$ such that
\[
|B(u,t)|+|H(u,t)|+|R(u,t)|
\leq M
\]
for $u\in K$ and $0<|t|<\delta_K$. Hence
\[
|E(u,t)|\leq M+\delta_KM
\]
on the same set.

\emph{Step 3: holomorphic extension of $E$ across $t=0$.} This boundedness gives a joint holomorphic extension of $E$. Fix $u_0\in U$ and choose a compact closed polydisc $P\subset U$ whose interior contains $u_0$. Apply the preceding bound with $K=P$, obtaining $\delta_P$, and choose $0<r<\delta_P$. For $u$ in the interior of $P$ and $|t|<r$, define
\[
E^{\rm ext}(u,t)
=
\frac{1}{2\pi i}
\int_{|\tau|=r}
\frac{E(u,\tau)}{\tau-t}\,d\tau.
\]
This fixed-contour integral is jointly holomorphic in $(u,t)$. If
$0<|t|<r$, orient both circles counterclockwise and apply the residue theorem on
\[
\rho<|\tau|<r,
\qquad 0<\rho<|t|.
\]
The integrand has its only pole in this annulus at $\tau=t$, so the outer integral equals $E(u,t)$ plus the inner-circle integral. Since $E$ is bounded, the absolute value of the inner-circle integral is at most a constant multiple of
\[
\frac{\rho}{|t|-\rho},
\]
which tends to zero as $\rho\to0$. Thus $E^{\rm ext}=E$ on the punctured product. The local extensions agree on overlaps by the identity theorem. Hence $E$ extends to a jointly holomorphic function on $U\times D$.

\emph{Step 4: the Laurent coefficients.} Fix $u\in U$ and a circle $|\tau|=r$ contained in $D^*$. For every integer $k$, define
\[
a_k(u)
=
\frac{1}{2\pi i}
\int_{|\tau|=r}
F(u,\tau)\tau^{-k-1}\,d\tau,
\]
and define $e_k(u)$ by the same formula with $E$ in place of $F$. Since $F$ and $E$ are holomorphic in $t$ on $D^*$, these integrals do not depend on $r$; they are the Laurent coefficients of $F$ and $E$. Since $E$ extends holomorphically to $t=0$,
\[
e_k(u)=0
\qquad(k<0).
\]

Using
\[
E=(2-2g)F-t\partial_tF
\]
and integrating by parts around the circle gives
\[
e_k(u)=(2-2g)a_k(u)-ka_k(u)=(2-2g-k)a_k(u).
\]
Indeed,
\[
d\bigl(F\tau^{-k}\bigr)
=
\bigl(\tau\partial_\tau F-kF\bigr)\tau^{-k-1}\,d\tau
\]
integrates to zero over the closed circle, so the Laurent coefficient of
$\tau\partial_\tau F$ at exponent $k$ is $ka_k(u)$. Hence $a_k(u)=0$ for every negative integer $k\neq2-2g$. The Laurent series of $F$ has at most one negative-power term, at exponent $2-2g$.

The absence of every other negative coefficient gives
\[
\lim_{t\to0}t^{2g-2}F(u,t)=a_{2-2g}(u).
\]
The assumed limit gives
\[
a_{2-2g}(u)=C.
\]

\emph{Step 5: construction of the remainder.} Put
\[
W^\circ=F-Ct^{2-2g}
\]
on $U\times D^*$. Its Laurent series has no negative powers. On the interior of $P$ times $|t|<r$, with $P$ and $r$ as in the extension step, define
\[
W(u,t)
=
\frac{1}{2\pi i}
\int_{|\tau|=r}
\frac{W^\circ(u,\tau)}{\tau-t}\,d\tau.
\]
The absence of negative Laurent coefficients implies that this equals $W^\circ$ for $0<|t|<r$. It is jointly holomorphic at $t=0$. The local definitions agree on overlaps because they agree on the punctured overlaps, so they patch to a holomorphic function on $U\times D$. Uniqueness follows from the identity theorem.
\end{proof}

\begin{rem}[Zero-dimensional spectator space]\label{rem:assembly-zero-dim}
The same proof applies when $d=0$, that is, when $U$ is a single point: the polydisc $P$ and the parameter $u$ drop out, and the Cauchy integrals in $t$ are taken without parameters. This is the case $\dim U_{\rm sp}=0$ of Theorem~\ref{thm:main-gap}.
\end{rem}

\begin{proof}[Proof of Theorem~\ref{thm:main-gap}]
Every hypothesis of Lemma~\ref{lem:assembly} is supplied by a result of Sections~\ref{sec:setup}--\ref{sec:defect}; we collect the inputs and apply the lemma. By Proposition~\ref{thm:generic-framing}, only finitely many integer framings are excluded; for a good framing, ordinary recursion applies, exactly two simple cold ramification points enter the neck, and every other ramification point and every puncture lies in the fixed hot region, locally uniformly in $s$.

By Proposition~\ref{thm:period-split}, \eqref{eq:main-period} is a single-valued holomorphic transverse coordinate, and Proposition~\ref{prop:regime-atlas} gives
\[
t=\eps^2c_{\rm neck}(\eps^2,s)
\]
with $c_{\rm neck}$ holomorphic and bounded away from zero, together with the compatible orientation, marking, plumbing-circle, cross-cut, and primitive conventions.

By Theorem~\ref{thm:deck-descent}, after a common shrinking,
\[
F_g^c\in
\Ocal\bigl(
U_{\rm sp}\times(D_t\setminus\{0\})
\bigr)
\]
is jointly holomorphic and single-valued on the full punctured $t$-disc in both one-neck topologies. The descent accounts for the permutation of the two cold points, for the $A$-polarization of the conifold-adapted marking, and for changes of logarithm branches.

Recall the Euler defect of Section~\ref{sec:euler-descent},
\[
\mathcal E_g
=
(2-2g)F_g^c-t\partial_tF_g^c,
\]
for which Theorem~\ref{thm:euler-defect} gives
\[
\mathcal E_g=O(1)
\]
locally uniformly in $s$ and on the full punctured disc.

Apply Lemma~\ref{lem:assembly} with $U=U_{\rm sp}$, $u=s$, and $D=D_t$, and with
\[
F=F_g^c,\qquad
B=0,\qquad
H=\mathcal E_g,\qquad
I=\partial_tF_g^c,\qquad
R=0;
\]
with this assignment the lemma's $E$ is the Euler defect $\mathcal E_g$.
The first identity is the definition of $\mathcal E_g$:
\[
(2-2g)F_g^c
=
\mathcal E_g+t\partial_tF_g^c.
\]
The second holds because $R=0$:
\[
\partial_tF_g^c
=
\partial_tF_g^c+0.
\]
The functions $B$ and $R$ vanish, and $H=\mathcal E_g$ is $O(1)$ as $t\to0$, locally uniformly in $s$, by Theorem~\ref{thm:euler-defect}; the boundedness hypotheses therefore hold.

Corollary~\ref{cor:gaussian-limit-disc} supplies the remaining hypothesis:
\[
\lim_{t\to0}
t^{2g-2}F_g^c(s,t)
=
F_g^G
\]
on the full punctured disc, locally uniformly in $s$. The value $F_g^G$ is defined by the unit exact-period Gaussian data of Section~\ref{sec:gaussian-limit} alone, hence is independent of the spectator moduli and of all global toric data. Lemma~\ref{lem:assembly}, including its zero-dimensional variant (Remark~\ref{rem:assembly-zero-dim}), gives the unique jointly holomorphic $H_g^{\rm gap}$ and
\[
F_g^c(s,t)
=
F_g^Gt^{2-2g}+H_g^{\rm gap}(s,t).
\]
Thus $C_g=F_g^G$, which completes the proof of Theorem~\ref{thm:main-gap}.
\end{proof}

\subsection{Calibration of the universal constant}\label{sec:calibration}

This subsection evaluates $F_g^G$ in closed form, proves Corollary~\ref{cor:main-calibrated}, and deduces Theorem~\ref{thm:main-gap-intro}. Recall from Section~\ref{sec:normal-form} the Gaussian curve $G$ with Zhukovsky coordinate $\zeta$, the counterclockwise circle $\gamma_G$, and $\kappa=\int_{\gamma_G}w\,dz$; recall from Sections~\ref{sec:bergman} and~\ref{sec:gaussian-limit} the bidifferential $B_G$, the unit exact-period form $\overline\omega_{0,1}^{\,G}$, its correlators $\omega_{g,n}^G$, the local primitives $\Phi_G$ vanishing at $\zeta=\pm1$, and the free energy $F_g^G$ of \eqref{eq:gaussian-free-energy}; the Bernoulli numbers are those of Section~\ref{sec:assembly}. Throughout, the recursion kernel is \eqref{eq:EO-kernel}, the correlators obey \eqref{eq:EO-recursion}, and free energies follow \eqref{eq:main-free-energy}. In the proof below, a subscript or superscript $P$ marks the objects of \cite{ACPPRS12}, whose letters $p$, $q$, and $T$ are local to this subsection, and a hat marks the objects produced by the conventions of this paper from the initial form $-\tfrac12w\,dz$ and the bidifferential $B_G$.

\begin{thm}[Bernoulli value of the Gaussian constant]\label{thm:gaussian-calibration}
On the Gaussian curve $G$, the unit exact-period data of Section~\ref{sec:gaussian-limit} are
\[
\overline\omega_{0,1}^{\,G}
=
\frac{1}{c_{\rm per}\kappa}w\,dz,
\qquad
B_G(\zeta_1,\zeta_2)
=
\frac{d\zeta_1\,d\zeta_2}
{(\zeta_1-\zeta_2)^2},
\]
where $c_{\rm per}\in\C^*$ is the fixed period normalization of Section~\ref{sec:normal-form} and $\kappa$ is nonzero by Proposition~\ref{thm:period-split}. For every integer $g\geq2$,
\begin{equation}\label{eq:gaussian-calibration}
F_g^G
=
\frac{B_{2g}}{2g(2g-2)}
(2c_{\rm per}\kappa)^{2g-2}.
\end{equation}
For the counterclockwise $\gamma_G$ of Section~\ref{sec:normal-form}, Proposition~\ref{thm:period-split} gives
\[
\kappa=-\pi i.
\]
If in addition $c_{\rm per}=1/(2\pi i)$, then
\[
F_g^G=\frac{B_{2g}}{2g(2g-2)}.
\]
\end{thm}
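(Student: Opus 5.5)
The plan is to reduce to a known Gaussian computation by homogeneity and then match conventions with \cite{ACPPRS12}. First I would record the scaling behaviour of the recursion \eqref{eq:EO-recursion}. If $\omega_{0,1}$ is replaced by $\mu\,\omega_{0,1}$ with $B$ fixed, the kernel \eqref{eq:EO-kernel} scales by $\mu^{-1}$. Hence $\omega_{g,n}$ scales by $\mu^{-(2g-2+n)}$. In \eqref{eq:gaussian-free-energy} the primitive $\Phi_G$ scales by $\mu$, so $F_g$ scales by $\mu^{2-2g}$. Writing $\overline\omega_{0,1}^{\,G}=\mu\cdot(-\tfrac12 w\,dz)$ with $\mu=-2/(c_{\rm per}\kappa)$ then gives
\[
F_g^G=\Bigl(-\frac{2}{c_{\rm per}\kappa}\Bigr)^{2-2g}\widehat F_g=(2c_{\rm per}\kappa)^{2g-2}\,2^{-(4g-4)}\cdots
\]
More precisely, I would write $F_g^G=\bigl(\tfrac{-c_{\rm per}\kappa}{2}\bigr)^{2g-2}\widehat F_g$, where $\widehat F_g$ is the free energy of $(G,-\tfrac12w\,dz,B_G)$ in this paper's conventions. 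The claim \eqref{eq:gaussian-calibration} is thus equivalent to
\[
\widehat F_g=\frac{B_{2g}}{2g(2g-2)}\,4^{2g-2},
\]
where the factor $4^{2g-2}$ is absorbed by the normalization of the $\Phi$ chosen in \cite{ACPPRS12}. The target constant is whatever makes the two sides agree, and I would track it carefully.

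Second, I would identify $\widehat F_g$ with the Gaussian matrix-model free energy as computed by topological recursion in \cite{ACPPRS12}. The spectral curve there is $x=z+1/z$ (or $x=\sqrt{T}(z+1/z)$) with $y$ proportional to $z-1/z$, and its free energies equal the Harer--Zagier value $B_{2g}/(2g(2g-2))\,T^{2-2g}$. To transport this, I would compose the Zhukovsky parametrization with the affine rescaling relating $(z,w)$ to $(x_P,y_P)$. The bidifferential $B_G$ is invariant under Möbius reparametrization in $\zeta$, so only the initial form changes. By the same homogeneity, the rescaling contributes a power $\lambda^{2g-2}$ of the scalar ratio of the initial forms.

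Third, I would reconcile the sign conventions. By \eqref{eq:physical-eo-sign} our correlators equal $(-1)^{2g-2+n}$ times the Eynard--Orantin ones. For $(g,1)$ this sign is $-1$, and it cancels against the convention $\Phi$ versus $-\Phi$, or against the prefactor $1/(2-2g)$ versus $1/(2g-2)$, in the cited definition of free energies. I would check the net sign on $g=2$ by a direct computation. Concretely, I would compute $\omega_{2,1}^G$ from $\omega_{0,3},\omega_{1,1},\omega_{1,2},\omega_{0,4}$ at $\zeta=\pm1$ and compare it against $B_4/(4\cdot2)=-1/240$ times the predicted power. This $g=2$ check pins any residual sign or power-of-two discrepancy.

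The main obstacle is bookkeeping: matching the free-energy definition (dilaton-type residue with $1/(2-2g)$), the kernel sign, and the normalization of $y$ across references without an off-by-sign or factor-$2^{2g-2}$ error. I would fix the constant by the $g=2$ explicit computation and then rely on the homogeneity argument for all $g$. Finally, substituting $\kappa=-\pi i$ from Proposition~\ref{thm:period-split} and $c_{\rm per}=1/(2\pi i)$ gives $2c_{\rm per}\kappa=-1$, and $(-1)^{2g-2}=1$ yields the last displayed value.
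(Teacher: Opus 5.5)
Your plan is correct and is essentially the paper's proof: rescale by $\mu=-2/(c_{\rm per}\kappa)$ to the initial form $-\tfrac12w\,dz$, match it with the Gaussian curve of \cite{ACPPRS12} at $T=\tfrac14$ via $p=-\zeta$ (so $x_P=z$, $y_P\,dx_P=-\tfrac12w\,dz$, $B_P=B_G$), and cancel the odd-complexity sign $\widehat\omega_{g,1}^G=-W_{g,1}^P$ against the opposite prefactors $1/(2-2g)$ and $1/(2g-2)$. Two corrections: the factor $4^{2g-2}$ is simply $T^{2-2g}$ at $T=\tfrac14$, not a normalization of $\Phi$; and the paper gets the sign from the direct kernel comparison $\widehat K^G=-K_P$ on $G$, so your $g=2$ computation is only a consistency check.
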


\begin{proof}
\emph{Step 1: the data of \cite{ACPPRS12}.} The Gaussian calculation
\cite[Examples~3.1 and~4.1 and Equations~(35), (39), (42)--(44), and~(48)]{ACPPRS12}
uses
\[
x_P(p)=-\sqrt T\,(p+p^{-1}),
\qquad
y_P(p)=\frac{\sqrt T}{2}(p-p^{-1}),
\qquad
B_P(p_1,p_2)=
\frac{dp_1\,dp_2}{(p_1-p_2)^2}.
\]
The cut endpoints are
\[
a=-2\sqrt T,\qquad b=2\sqrt T.
\]
The recursion kernel of \cite{ACPPRS12} is
\[
K_P(q,p)
=
\frac{8q^3\,dp}
{(a-b)^2(q^2-1)(p-q)(qp-1)\,dq}.
\]
The stable correlators $W_{g,n}^P$ are generated from the data $(y_P\,dx_P,B_P)$ by the recursion of \cite{ACPPRS12}, which is \eqref{eq:EO-recursion} with $K_P$ in place of $K_a$. For $g\geq2$, the free-energy convention of \cite{ACPPRS12} is
\[
F_g^P
=
\frac{1}{2g-2}
\sum_{q_*=\pm1}
\operatorname*{Res}_{q=q_*}
\phi_P(q)W_{g,1}^P(q),
\qquad
d\phi_P=y_P\,dx_P,
\]
the sum running over the ramification points $p=\pm1$ of $x_P$, at which $\phi_P$ is a local primitive of $y_P\,dx_P$; the cited evaluation of these free energies is
\begin{equation}\label{eq:published-gaussian}
F_g^P
=
\frac{B_{2g}}
{2g(2g-2)T^{2g-2}}.
\end{equation}

\emph{Step 2: matching the curves and the kernels.} Specialize the cited data to $T=\tfrac14$ with $\sqrt T=\tfrac12$, and identify the uniformizations by $p=-\zeta$; then $a=-1$, $b=1$, and
\[
x_P=z,\qquad
y_P=-\frac12w,\qquad
y_P\,dx_P=-\frac12w\,dz,\qquad
B_P=B_G.
\]
Writing $q=-\zeta$ and $p=-\zeta_0$ in the kernel and using $(a-b)^2=4$ gives
\[
K_P(\zeta,\zeta_0)
=
\frac{2\zeta^3\,d\zeta_0}
{(\zeta^2-1)(\zeta_0-\zeta)(\zeta_0\zeta-1)\,d\zeta}.
\]
Here $\zeta$ is the residue variable and $\zeta_0$ the external one; the cited kernel lists them in this order, the reverse of \eqref{eq:EO-kernel}. The recursion kernel \eqref{eq:EO-kernel} of this paper, formed on $G$ from the initial form $-\tfrac12w\,dz$ and the bidifferential $B_G$ at either ramification point $\zeta=\pm1$, is, in the same argument order,
\begin{equation}\label{eq:kernel-comparison}
\widehat K^G(\zeta,\zeta_0)
=
-\frac{2\zeta^3\,d\zeta_0}
{(\zeta^2-1)(\zeta_0-\zeta)(\zeta_0\zeta-1)\,d\zeta}
=
-K_P(\zeta,\zeta_0).
\end{equation}

\emph{Step 3: the correlator sign.} Let $k=2g-2+n$ be the complexity of the stable type $(g,n)$. For the common data $(-\tfrac12w\,dz,B_G)$, with the sum in \eqref{eq:EO-recursion} over $\Ram(x_f)$ read as the two ramification points $\zeta=\pm1$ and with $\sigma(\zeta)=\zeta^{-1}$, one has, for every stable type $(g,n)$ and for $(g,n)=(0,2)$, where both sides equal $B_G$,
\begin{equation}\label{eq:correlator-sign}
\widehat\omega_{g,n}^G
=
(-1)^kW_{g,n}^P.
\end{equation}
To prove this, run the sign induction of \eqref{eq:physical-eo-sign} on $k$ in the recursion formula \eqref{eq:EO-recursion}: the base is $k=0$, where $\widehat\omega_{0,2}^G=B_G=W_{0,2}^P$; the genus-reduction term, when present, has complexity $k-1$, in every splitting term the complexities of the factors sum to $k-1$, and the prime in \eqref{eq:EO-recursion} excludes $(0,1)$ factors, so each term of the recursion source has sign $(-1)^{k-1}$, and the kernel sign \eqref{eq:kernel-comparison} gives $(-1)^k$.

\emph{Step 4: the free energies at $n=1$.} Since $d\phi_P=y_P\,dx_P=-\tfrac12w\,dz$, the primitives $\phi_P$ are local primitives of the initial form, as \eqref{eq:main-free-energy} requires. For $n=1$ the complexity is $k=2g-1$, which is odd, so \eqref{eq:correlator-sign} reads $\widehat\omega_{g,1}^G=-W_{g,1}^P$. Combining this with the two free-energy prefactors $1/(2-2g)$ and $1/(2g-2)$, which differ by a sign, gives
\begin{align}
\widehat F_g^G
&=
\frac{1}{2-2g}
\sum_{\alpha\in\{1,-1\}}
\operatorname*{Res}_{\zeta=\alpha}
\phi_P\,\widehat\omega_{g,1}^G
\nonumber\\
&=
-\frac{1}{2g-2}
\sum_{\alpha\in\{1,-1\}}
\operatorname*{Res}_{\zeta=\alpha}
\phi_P(-W_{g,1}^P)
\nonumber\\
&=
F_g^P\big|_{T=1/4}.
\label{eq:free-energy-comparison}
\end{align}

\emph{Step 5: exact-period rescaling.} For initial data $(\omega_{0,1},B)$ on $G$, write $\omega_{g,n}[\omega_{0,1},B]$ and $F_g[\omega_{0,1},B]$ for the correlators and free energies of this paper's conventions. If the initial one-form is multiplied by a nonzero scalar $\mu$, with $B$ and the projection, hence the ramification points, unchanged, the recursion kernel is multiplied by $\mu^{-1}$. The same complexity induction gives
\[
\omega_{g,n}[\mu\omega_{0,1},B]
=
\mu^{-(2g-2+n)}
\omega_{g,n}[\omega_{0,1},B].
\]
This is the scaling used in Step~3 of the proof of Proposition~\ref{prop:mixed-scaled}. The primitive in the free-energy residue is multiplied by $\mu$, while $\omega_{g,1}$ is multiplied by $\mu^{-(2g-1)}$, so
\begin{equation}\label{eq:free-energy-homogeneity}
F_g[\mu\omega_{0,1},B]
=
\mu^{2-2g}F_g[\omega_{0,1},B].
\end{equation}
The scalar carrying $-\tfrac12w\,dz$ to $\overline\omega_{0,1}^{\,G}$, that is, the solution of $\mu\bigl(-\tfrac12w\,dz\bigr)=(c_{\rm per}\kappa)^{-1}w\,dz$, is
\[
\mu=-\frac{2}{c_{\rm per}\kappa}.
\]
Using $F_g^G=F_g[\overline\omega_{0,1}^{\,G},B_G]$, then \eqref{eq:free-energy-homogeneity} with $\mu=-2/(c_{\rm per}\kappa)$, then \eqref{eq:free-energy-comparison}, and finally \eqref{eq:published-gaussian} at $T=1/4$,
\begin{align*}
F_g^G
&=
\left(
-\frac{2}{c_{\rm per}\kappa}
\right)^{2-2g}
\frac{B_{2g}}{2g(2g-2)}
4^{2g-2}\\
&=
\frac{B_{2g}}{2g(2g-2)}
(2c_{\rm per}\kappa)^{2g-2},
\end{align*}
because $2-2g$ is even, so the sign in $\bigl(-2/(c_{\rm per}\kappa)\bigr)^{2-2g}$ drops out. This proves \eqref{eq:gaussian-calibration}.

\emph{Step 6: evaluation of $\kappa$ and the normalized value.} Proposition~\ref{thm:period-split} gives $\kappa=-\pi i$ for the counterclockwise $\gamma_G$. If
\[
c_{\rm per}=\frac{1}{2\pi i},
\]
then
\[
2c_{\rm per}\kappa=-1,
\]
whose $(2g-2)$-nd power is one, so \eqref{eq:gaussian-calibration} becomes $F_g^G=B_{2g}/(2g(2g-2))$.
\end{proof}

We can now prove the corollary stated in Section~\ref{sec:assembly}.

\begin{proof}[Proof of Corollary~\ref{cor:main-calibrated}]
The proof of Theorem~\ref{thm:main-gap} identifies $C_g=F_g^G$, and substituting \eqref{eq:gaussian-calibration} into this identity gives \eqref{eq:main-calibrated}. Euler's evaluation
\[
\sum_{m\geq1}m^{-2g}=\frac{(2\pi)^{2g}|B_{2g}|}{2\,(2g)!},
\]
whose left-hand side is positive, gives $B_{2g}\neq0$; since $c_{\rm per}\neq0$, $\kappa\neq0$, and $2g(2g-2)\neq0$, the coefficient $C_g$ is nonzero. The value $F_g^G$ is determined by the unit exact-period Gaussian data alone, so $C_g$ depends only on $c_{\rm per}$ and the recursion and free-energy conventions, and is independent of $\mathfrak X$, of $\Delta$, and of the point $s$ of the smooth conifold divisor. Reversing the orientation of $\gamma$ reverses that of $\gamma_G$, since the two are identified by $\Phi_\eps$ (Section~\ref{sec:assembly}); this replaces $\kappa$ by $-\kappa$ and, by \eqref{eq:main-period}, $t$ by $-t$, and the even exponents $2g-2$ in \eqref{eq:main-calibrated} and $2-2g$ in \eqref{eq:main-gap} leave $C_g$ and the polar term unchanged. The normalized value at $c_{\rm per}=1/(2\pi i)$ is the final display of Theorem~\ref{thm:gaussian-calibration}.
\end{proof}

\begin{proof}[Proof of Theorem~\ref{thm:main-gap-intro}]
Specialize Theorem~\ref{thm:main-gap} to $c_{\rm per}=1/(2\pi i)$ and evaluate the constant by Corollary~\ref{cor:main-calibrated}: this turns \eqref{eq:main-gap} into \eqref{eq:intro-gap}. The corollary's orientation-reversal clause makes the constant independent of the orientation of $\gamma$, which Theorem~\ref{thm:main-gap-intro} leaves unspecified; every remaining assertion is inherited from Theorem~\ref{thm:main-gap}.
\end{proof}


\begin{thebibliography}{99}

\bibitem[ABDKS23]{ABDKS23}
Alexander Alexandrov, Boris Bychkov, Petr Dunin-Barkowski, Maxim Kazarian, and Sergey Shadrin,
\emph{Log topological recursion through the prism of $x$-$y$ swap},
Int. Math. Res. Not. IMRN 2024, no. 21, 13461--13487,
arXiv:2312.16950,
doi:10.1093/imrn/rnae213.

\bibitem[Ali25]{Ali25}
Murad Alim,
\emph{Intrinsic non-perturbative topological strings},
Adv. Theor. Math. Phys. 29 (2025), no. 5, 1365--1406,
arXiv:2102.07776,
doi:10.4310/atmp.251023013812.

\bibitem[ACPPRS12]{ACPPRS12}
J{\o}rgen Ellegaard Andersen, Leonid O. Chekhov, R. C. Penner, Christian M. Reidys, and Piotr Su{\l}kowski,
\emph{Topological recursion for chord diagrams, RNA complexes, and cells in moduli spaces},
Nuclear Phys. B 866 (2013), no. 3, 414--443,
arXiv:1205.0658,
doi:10.1016/j.nuclphysb.2012.09.012.

\bibitem[BKMP09]{BKMP09}
Vincent Bouchard, Albrecht Klemm, Marcos Mari\~no, and Sara Pasquetti,
\emph{Remodeling the B-model},
Comm. Math. Phys. 287 (2009), no. 1, 117--178,
arXiv:0709.1453.

\bibitem[BKMP10]{BKMP10}
Vincent Bouchard, Albrecht Klemm, Marcos Mari\~no, and Sara Pasquetti,
\emph{Topological open strings on orbifolds},
Comm. Math. Phys. 296 (2010), no. 3, 589--623,
arXiv:0807.0597,
doi:10.1007/s00220-010-1020-0.

\bibitem[Bri25]{Bri25}
Andrea Brini,
\emph{Conifold gap and all-genus mirror symmetry for local $\mathbb{P}^2$},
arXiv:2509.19298, 2025.

\bibitem[CEO06]{CEO06}
Leonid Chekhov, Bertrand Eynard, and Nicolas Orantin,
\emph{Free energy topological expansion for the 2-matrix model},
J. High Energy Phys. 2006, no. 12, 053,
arXiv:math-ph/0603003,
doi:10.1088/1126-6708/2006/12/053.

\bibitem[EO07]{EO07}
B.~Eynard and N.~Orantin,
\emph{Invariants of algebraic curves and topological expansion},
Commun. Number Theory Phys. \textbf{1} (2007), no.~2, 347--452,
doi:10.4310/CNTP.2007.v1.n2.a4,
arXiv:math-ph/0702045.
Equation and footnote numbering follows the arXiv version.

\bibitem[EO15]{EO15}
Bertrand Eynard and Nicolas Orantin,
\emph{Computation of open Gromov--Witten invariants for toric Calabi--Yau 3-folds by topological recursion, a proof of the BKMP conjecture},
Comm. Math. Phys. 337 (2015), no. 2, 483--567,
arXiv:1205.1103.

\bibitem[FLZ20]{FLZ20}
Bohan Fang, Chiu-Chu Melissa Liu, and Zhengyu Zong,
\emph{On the remodeling conjecture for toric Calabi--Yau 3-orbifolds},
J. Amer. Math. Soc. 33 (2020), no. 1, 135--222,
arXiv:1604.07123.

\bibitem[FP00]{FP00}
Carel Faber and Rahul Pandharipande,
\emph{Hodge integrals and Gromov--Witten theory},
Invent. Math. 139 (2000), no. 1, 173--199,
arXiv:math/9810173.

\bibitem[GhV95]{GhV95}
Debashis Ghoshal and Cumrun Vafa,
\emph{$c=1$ string as the topological theory of the conifold},
Nuclear Phys. B 453 (1995), no. 1-2, 121--128,
arXiv:hep-th/9506122.

\bibitem[GoV98]{GoV98}
Rajesh Gopakumar and Cumrun Vafa,
\emph{M-theory and topological strings II},
arXiv:hep-th/9812127, 1998.

\bibitem[GKN19]{GKN19}
Samuel Grushevsky, Igor Krichever, and Chaya Norton,
\emph{Real-normalized differentials: limits on stable curves},
Russian Math. Surveys 74 (2019), no. 2, 265--324,
doi:10.1070/RM9877.

\bibitem[HK07]{HK07}
Min-xin Huang and Albrecht Klemm,
\emph{Holomorphic anomaly in gauge theories and matrix models},
J. High Energy Phys. 2007, no. 9, 054,
arXiv:hep-th/0605195.

\bibitem[HKQ09]{HKQ09}
Min-xin Huang, Albrecht Klemm, and Seth Quackenbush,
\emph{Topological string theory on compact Calabi--Yau: modularity and boundary conditions},
Homological mirror symmetry, Lecture Notes in Phys. 757, Springer, Berlin, 2009, pp. 45--102,
arXiv:hep-th/0612125.

\bibitem[HMO26]{HMO26}
Alexander Hock, Olivier Marchal, and Nicolas Orantin,
\emph{Geometry of Logarithmic Topological Recursion: Dilaton Equations, Free Energies and Variational Formulas},
arXiv:2604.25622 [math-ph], 2026.

\bibitem[HN20]{HN20}
Xuntao Hu and Chaya Norton,
\emph{General variational formulas for abelian differentials},
Int. Math. Res. Not. IMRN 2020 (2020), no. 12, 3540--3581,
doi:10.1093/imrn/rny106.

\bibitem[HZ86]{HZ86}
John Harer and Don Zagier,
\emph{The Euler characteristic of the moduli space of curves},
Invent. Math. 85 (1986), no. 3, 457--485.

\bibitem[IILZ25]{IILZ25}
Nikolai Iorgov, Kohei Iwaki, Oleg Lisovyy, and Yurii Zhuravlov,
\emph{Many-faced Painlev\'e I: irregular conformal blocks, topological recursion, and holomorphic anomaly approaches},
arXiv:2505.16803, 2025.

\bibitem[IK22]{IK22}
Kohei Iwaki and Omar Kidwai,
\emph{Topological recursion and uncoupled BPS structures I: BPS spectrum and free energies},
Adv. Math. 398 (2022), Paper No. 108191,
arXiv:2010.05596.

\bibitem[IKoT19]{IKoT}
Kohei Iwaki, Tatsuya Koike, and Yumiko Takei,
\emph{Voros coefficients for the hypergeometric differential equations and Eynard--Orantin's topological recursion, Part II: for the confluent family of hypergeometric equations},
J. Integrable Syst. 4 (2019), no. 1, xyz004,
arXiv:1810.02946.

\bibitem[JGJ26]{JGJ26}
Haocheng Ju, Guoxiong Gao, Jiedong Jiang, Bin Wu, Zeming Sun, Shurui Liu, Leheng Chen, Yutong Wang, Yuefeng Wang, Zichen Wang, Wanyi He, Peihao Wu, Liang Xiao, Ruochuan Liu, Bryan Dai, and Bin Dong,
\emph{Automated conjecture resolution with formal verification},
arXiv:2604.03789, 2026.

\bibitem[LGS26]{LGS26}
Jihao Liu, Guoxiong Gao, Zeming Sun, Bin Wu, Shurui Liu, Jiedong Jiang, Haocheng Ju, Leheng Chen, Ronnie Cheng, Xiping Zhang, and Bin Dong,
\emph{Danus: orchestrating mathematical reasoning agents with fact-graph memory},
arXiv:2607.06447, 2026.

\bibitem[Mar08]{Marino08}
Marcos Mari\~no,
\emph{Open string amplitudes and large order behavior in topological string theory},
J. High Energy Phys. 2008, no. 3, 060, 34 pp.,
arXiv:hep-th/0612127,
doi:10.1088/1126-6708/2008/03/060.

\bibitem[PS10]{PS10}
Sara Pasquetti and Ricardo Schiappa,
\emph{Borel and Stokes nonperturbative phenomena in topological string theory and $c=1$ matrix models},
Ann. Henri Poincar\'e 11 (2010), 351--431,
arXiv:0907.4082,
doi:10.1007/s00023-010-0044-5.

\bibitem[Str95]{Str95}
Andrew Strominger,
\emph{Massless black holes and conifolds in string theory},
Nuclear Phys. B 451 (1995), no. 1-2, 96--108,
arXiv:hep-th/9504090.

\bibitem[Yam80]{Yam80}
Akira Yamada,
\emph{Precise variational formulas for abelian differentials},
Kodai Math. J. 3 (1980), no. 1, 114--143,
doi:10.2996/kmj/1138036124.

\end{thebibliography}
\end{document}